\renewcommand{\subsection}{\@startsection{subsection}{2}{0pt}{\baselineskip}{\baselineskip}{\bfseries}}
\def\LaTeX{\leavevmode L\raise.42ex
	\hbox{\kern-.3em\size{\sf@size}{0pt}\selectfont A}\kern-.15em\TeX}
\newcommand{\BibTeX}{{\rm B\kern-.05em{\sc
			i\kern-.025emb}\kern-.08em\TeX}}
\def\@currentlabel{2.1}\label{e:dispaa}
\def\@currentlabel{2.21}\label{e:dispau}
\def\@currentlabel{2.22}\label{e:dispav}
\def\@currentlabel{2.23}\label{e:dispaw}
\def\@currentlabel{2.24}\label{e:dispax}
\def\theequation{\thesection.\@arabic\c@equation}
\newcounter{mnotecount}[section]
\newcommand{\rmnote}[1]{}
\numberwithin{equation}{section}
\renewcommand{\theequation}{\arabic{section}.\arabic{equation}}
\newtheorem{thm}{Theorem}[section]
\newtheorem{lem}[thm]{Lemma}
\newtheorem{prop}[thm]{Proposition}
\theoremstyle{definition}
\newtheorem{defn}{Definition}[section]
\newtheorem{rem}{Remark}[section]
\newcommand{\R}{\mathbb{R}}
\newcommand\supp{\operatorname{supp}}
\begin{document}
	\begin{CJK}{UTF8}{gbsn}
		\author[Y. Yang]{Yifan Yang}
		\address{School of Mathematics and statistics, Xi'an Jiaotong University, Xi'an 710049, P.R. China}
		\email{yangyifan2021@stu.xjtu.edu.cn}
		\title[self-similar solutions to the MHD equations]{Forward self-similar solutions to the MHD equations: existence and pointwise estimates}

		\maketitle
		\begin{abstract}
			In this paper, we study the forward self-similar solutions to the three-dimensional Magnetohydrodynamic equations (MHD equations) in the whole space. By employing the Leray-Schauder theorem and blow-up argument, we construct a global-time forward self-similar solutions, which is smooth in $\R^{3}\times(0,\infty)$.
			 Furthermore, by investigating the regularity of the weak solutions to the corresponding Leray system in the weighted Sobolev space, we can derive the pointwise estimate for the forward self-similar solution.
			
			\medskip
			\emph{Keywords:} MHD equations; forward self-similar solution; blow-up argument; pointwise estimate
		\end{abstract}
	      
		\setcounter{equation}{0}
		\setcounter{equation}{0}
		\section{Introduction}
		The MHD equations represent a mathematical model in plasma physics, where the interaction between the velocity field and the magnetic field dictates that plasma motion adheres to the principles of both fluid dynamics and electromagnetic field laws.  Consequently, the MHD equations are derived by coupling the Navier-Stokes equations of fluid dynamics with Maxwell's equations of electromagnetism.  This paper will explore the forward self-similar solutions to the following incompressible MHD equations
		\begin{align}\label{E1.1}
			\begin{split}
				\left.
				\begin{aligned}
					\partial_{t} u-\frac{1}{Re}\Delta u+(u\cdot\nabla)u-(b\cdot\nabla)b+\nabla p=0\\
					\partial_{t} b-\frac{1}{Rm}\Delta b+(u\cdot\nabla)b-(b\cdot\nabla)u=0\\
					\mbox{div}\,u =\mbox{div}\,b =0\\
				\end{aligned}\ \right\}\ \mbox{in}\ \R^{3}\times (0,+\infty)
			\end{split}
		\end{align}
		with initial condition
		\begin{equation}\label{E1.1-initial1}
			u(\cdot,0)=u_{0}(x)\ \mbox{and}\ \ b(\cdot,0)=b_{0}(x), \ \ \mbox{in}\ \  \R^3,
		\end{equation}
	where $u:\R^{3}\times\R^{+}\rightarrow\R^{3}$ is the velocity field and $b:\R^{3}\times\R^{+}\rightarrow\R^{3}$ is the magnetic field, the unknown scalar field $p:\R^{3}\times\R^{+}\rightarrow\R$ corresponds to the pressure of the fluid. The nondimensional numbers $Re$ and $Rm$ represent the Reynolds number and the magnetic Reynolds number, respectively. Since the values of $Re$ and $Rm$ don't play any role in our proofs, we will assume, for simplicity, that $Re = Rm = 1 $  throughout this paper.
		
		It is well know that the system \eqref{E1.1} is invariant under the scaling
		\begin{align}\label{E1.1-scal1}
			\left \{
			\begin{array}{ll}
				u_{\lambda}(x,t)=\lambda u(\lambda x,\lambda^{2}t),\\
				b_{\lambda}(x,t)=\lambda b(\lambda x,\lambda^{2}t),\\
				p_{\lambda}(x,t)=\lambda^{2}p(\lambda x,\lambda^{2}t).
			\end{array}
			\right.
		\end{align}
		We say that a solution $(u,b,p)$ of the system \eqref{E1.1} is a forward self-similar solution if $u=u_{\lambda}$, $b=b_{\lambda}$ and $p=p_{\lambda}$ for all $\lambda>0$. Note that, in the study of self-similar solutions, we naturally require that the initial values are also self-similar, that is,
		\begin{align*}
		u_{0\lambda}(x)=\lambda u_{0}(\lambda x)\ \mbox{and} \ b_{0\lambda}(x)=\lambda b_{0}(\lambda x).
		\end{align*}
		This implies that the initial data $(u_{0},b_{0})$ is homogeneous of degree $-1$.
		
		The study of self-similar solutions in fluid mechanics can be traced back to Leray's work \cite{JL} on the Navier-Stokes equations (i.e. $b=0$ in system \eqref{E1.1}) in 1934. Many years later, Giga, Miyakawa and Osada \cite{GMO} proved the existence and uniqueness of self-similar solutions in $\R^{2}$. For $\R^{3}$, under the assumption of small initial values, the existence and uniqueness of self-similar solutions have been established in various frameworks, such as Morrey space \cite{GT}, Besov space \cite{C1,C2},
		Lorentz space \cite{B} and $\mbox{BMO}^{-1}(\R^{3})$ \cite{KT2}.
		However, for large initial values, the contraction mapping argument becomes ineffective. 
	Recently, Jia and \v{S}ver\'{a}k \cite{JS} constructed a forward self-similar solution $u(x,t)$ for any scale-invariant initial values $u_{0}$ which is locally H\"{o}lder continuous in $\R^{3}\setminus\{0\}$. Precisely,
	based on the theory of local Leray solutions introduced by Lemari\'{e}-Rieusset \cite{LR}, they derive the local-in-space H\"{o}lder estimates of $u(x,t)$ near $t=0$. Subsequently, they obtained {\em a priori} estimate for the perturbation term $V(x)=u(x,1)-e^{\Delta}u_{0}$, which enabled them to construct a forward self-similar solution using the {\em Leray-Schauder theorem}.
 Later, Korobkov and Tsai \cite{KT} introduced a new approach for constructing large forward self-similar solutions.
 The difference with the method presented in \cite{JS} is that Korobkov and Tsai directly consider the corresponding Leray systems about $V(x)$, establishes {\em a priori} estimates by the so called {\em blow-up argument} in Sobolev space, and the fixed-point theorem subsequently guarantees the existence of a self-similar solution.
  It is noteworthy that this {\em blow-up argument} likely originated from the work of De Giorgi on minimal surfaces \cite{GE}. Recently, Abe and Giga \cite{AG} employed this technique to demonstrate the analyticity of Stokes semigroups in $L^{\infty}$-type spaces.
In the study of self-similar solutions, one of the advantages of this method is its ability to address the half-space problem. Additionally, it can provide pointwise estimates for self-similar solutions by examining the regularity of weak solutions to the corresponding Leray systems. For example, by investigating the regularity of $V(x)$ within the framework of weighted Sobolev spaces, Lai, Miao and Zheng \cite{LMZ,LMZ1} derived a decay estimate for $V(x)$ and further obtained a pointwise estimate of the self-similar solution $u(x,t)$. Very recently, this approach has been adapted to the Oberbeck-Boussinesq system with Newtonian gravitational field by Brandolese and Karch \cite{LBGK}. For additional works on the existence and regularity of self-similar solutions to the Navier-Stokes equations, please refer to \cite{ BZT,LB,CGNP, JST, NRS, T,T1} and the references cited therein.
		
		The mathematical study of MHD equations \eqref{E1.1} can be traced back to the work of Duvaut and Lions \cite{GDJL}, who constructed a class of global weak solutions, i.e., energy weak solutions. The regularity of these energy weak solutions was further studied by Sermange and Teman \cite{ST}. When the initial values are small, 
		He and Xin \cite{HX} proved the existence and uniqueness of forward self-similar solutions in certain homogeneous spaces via contraction mapping argument. 
		 For large initial data, discretely self-similar solutions have been investigated within various frameworks, which is a broader class that contains forward self-similar solutions. For example, Lai \cite{L} has utilized the classical Galerkin method to construct a discretely self-similar local Leray solution for any initial values $u_{0},b_{0}\in L^{3,\infty}(\R^{3})$.
		Concurrently, Zhang and Zhang \cite{ZZ} proved the existence of discretely self-similar solutions to the generalized MHD equations in Besov space. Later, Fern\'{a}ndez-Dalgo and Jarr\'{i}n \cite{FDJ} also established the existence of discrete self-similar solutions in weighted $L^{2}$-space by examining the linearized equations (the advection-diffusion problem). However, their work does not address the pointwise estimates for large self-similar solutions.
		
		In this work, 
		 we construct a forward self-similar solution to the MHD equations with the $L^{\infty}_{loc}(\R^{3}\setminus\{0\})$ initial data. Moreover, under the assumption of higher regularity for the initial values, we can derive the pointwise estimate for the self-similar solution. The main result of this paper is stated as follows:
			\begin{thm}\label{T1.1}
			Let $(u_{0},b_{0})\in \mathbf{L}^{\infty}_{loc}(\R^{3}\setminus\{0\})$ be homogeneous of degree $-1$, with $\mathrm{div} u_{0}=\mathrm{div} b_{0}=0$. Then, there exists a constant $C=C(u_{0},b_{0})>0$, such that  
				\begin{enumerate}
				\item[(i)](existence and regularity) 
				the problem \eqref{E1.1}-\eqref{E1.1-initial1} exists a forward self-similar solution $(u,b)\in BC_{w}\big([0,\infty);\mathbf{L}^{3,\infty}(\R^{3})\big)$, which is smooth in $\R^{3}\times(0,\infty)$. Moreover, for all $t>0$ and $2\leq p\leq 6$, we have
				\begin{align}\label{T1}
					\begin{split}
						&\|u(t)-e^{t\Delta}u_{0}\|_{p}+\|b(t)-e^{t\Delta}b_{0}\|_{p}\leq Ct^{\frac{3}{2p}-\frac{1}{2}},\\&
						\|\nabla u(t)-\nabla e^{t\Delta}u_{0}\|_{2}+\|\nabla b(t)-\nabla e^{t\Delta}b_{0}\|_{2}\leq Ct^{-\frac{1}{4}}.
					\end{split}
				\end{align}
				\item[(ii)] (pointwise estimate) if $(u_{0},b_{0})\in \mathbf{C}^{0,1}_{loc}(\R^{3}\setminus\{0\})$, then, we have the following pointwise estimate
				\begin{align}\label{T3}
					\begin{split}
						|\nabla^{k} u(x,t)|+|\nabla^{k}b(x,t)|\leq C\frac{1}{(\sqrt{t}+|x|)^{k+1}},\ \ k=0,1
					\end{split}
				\end{align}
				and
				\begin{align}\label{T2-3}
					\begin{split}
						&|u(x,t)-e^{t\Delta}u_{0}|+|b(x,t)-e^{t\Delta}b_{0}|\leq C\frac{t}{(\sqrt{t}+|x|)^{3}}\log{\Big(2+\frac{|x|}{\sqrt{t}}\Big)},\\&
						|\nabla u(x,t)-\nabla e^{t\Delta}u_{0}|+|\nabla b(x,t)-\nabla e^{t\Delta}b_{0}|\leq C\frac{\sqrt{t}}{(\sqrt{t}+|x|)^{3}}.
					\end{split}
				\end{align}
			\item[(iii)] (improved pointwise estimate) if $(u_{0},b_{0})\in \mathbf{C}^{1,\alpha}_{loc}(\R^{3}\setminus\{0\})$ for any $0<\alpha\leq1$, then, the optimal pointwise estimate for the perturbation term is obtained, i.e.,
				\begin{align}\label{T2-4}
					\begin{split}
						|u(x,t)-e^{t\Delta}u_{0}|+|b(x,t)-e^{t\Delta}b_{0}|\leq C\frac{t}{(\sqrt{t}+|x|)^{3}}.
					\end{split}
				\end{align}
				\item[(iv)] (pointwise estimate of the pressure) 
				if $(u_{0},b_{0})\in \mathbf{C}^{1,1}_{loc}(\R^{3}\setminus\{0\})$, then there exists a pressure $p(x,t)$ satisfies
				\begin{equation}\label{Press}
					|\nabla^{k} p(x,t)|\leq Ct^{-\frac{1}{2}}\frac{1}{(\sqrt{t}+|x|)^{k+1}},\ \ k=0,1.
				\end{equation}
			\end{enumerate}
		\end{thm}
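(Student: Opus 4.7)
The plan is to adapt the Korobkov--Tsai blow-up strategy together with the Lai--Miao--Zheng weighted Sobolev analysis, both cited in the introduction, to the coupled MHD system. Writing $u(x,t)=\tfrac{1}{\sqrt t}U(x/\sqrt t)$, $b(x,t)=\tfrac{1}{\sqrt t}B(x/\sqrt t)$, the profiles $(U,B)$ must solve the stationary Leray-type system
\begin{align*}
-\Delta U-\tfrac12 U-\tfrac12(x\cdot\nabla)U+(U\cdot\nabla)U-(B\cdot\nabla)B+\nabla P&=0,\\
-\Delta B-\tfrac12 B-\tfrac12(x\cdot\nabla)B+(U\cdot\nabla)B-(B\cdot\nabla)U&=0,\\
\mathrm{div}\,U=\mathrm{div}\,B&=0.
\end{align*}
Setting $U=e^{\Delta}u_0+V$, $B=e^{\Delta}b_0+W$ turns this into a perturbed Leray system for $(V,W)$ whose forcing enjoys good integrability, since $e^{\Delta}u_0,e^{\Delta}b_0$ are smooth away from the origin and decay like $|x|^{-1}$ at infinity. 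The natural energy space for $(V,W)$ is then $H^1(\R^3)$.

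For part (i), I would introduce a homotopy parameter $\sigma\in[0,1]$ in front of the quadratic terms and prove a $\sigma$-uniform $H^1$ a priori bound for weak solutions $(V_\sigma,W_\sigma)$. Arguing by contradiction, assume $M_n:=\|(V_n,W_n)\|_{H^1}\to\infty$; the rescaled pair $M_n^{-1}(V_n,W_n)$ is bounded in $H^1$ and converges weakly to $(\hat V,\hat W)\neq 0$ satisfying a homogeneous Euler--Leray MHD system. Testing against $(\hat V,\hat W)$, using $\mathrm{div}\,\hat V=\mathrm{div}\,\hat W=0$, the cancellation $\int(B\cdot\nabla)B\cdot U+\int(B\cdot\nabla)U\cdot B=0$, and the positive linear contribution $\tfrac14\int(|\hat V|^2+|\hat W|^2)$ produced by $-\tfrac12 I-\tfrac12(x\cdot\nabla)$ forces $\hat V=\hat W=0$, a contradiction. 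This Liouville-type step is the \emph{main obstacle}: the coupled cross-nonlinearities must interact favorably with the drift $-\tfrac12(x\cdot\nabla)$ without breaking the sign of the energy identity. The uniform a priori bound then feeds the Leray--Schauder theorem and yields a weak solution; standard Sobolev bootstrapping on the elliptic Leray system gives $C^\infty$ regularity, and the estimates \eqref{T1} follow by interpolating between the $L^2$ and $L^6$ norms of $(V,W)$ and reverting to self-similar variables.

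For parts (ii)--(iii), under $\mathbf{C}^{0,1}_{loc}$ initial data the heat extensions obey the sharp pointwise bound $|e^{\Delta}u_0(x)|+|e^{\Delta}b_0(x)|\lesssim(1+|x|)^{-1}$ with an additional order of decay for the perturbation $e^{\Delta}u_0-u_0$. I would test the perturbed Leray system against $|x|^{2\alpha}(V,W)$ to place $(V,W)$ in a scale of weighted Sobolev spaces, then iterate on the weight exponent by using the Oseen-type resolvent kernel associated to $-\Delta-\tfrac12-\tfrac12(x\cdot\nabla)$. This produces the sharp decay $|V(x)|+|W(x)|\lesssim(1+|x|)^{-3}\log(2+|x|)$ and $|\nabla V|+|\nabla W|\lesssim(1+|x|)^{-3}$. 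Restoring the self-similar variables gives \eqref{T3} and \eqref{T2-3}. Under $\mathbf{C}^{1,\alpha}_{loc}$ data, a Taylor expansion on the sphere $|x|=1$ gains one extra order in the forcing of the Leray system, removing the $\log$ loss and yielding \eqref{T2-4}.

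For part (iv), with $\mathbf{C}^{1,1}_{loc}$ initial data the pointwise bounds from (ii)--(iii) render the tensor $u\otimes u-b\otimes b$ sufficiently well-behaved that the pressure, recovered from $-\Delta p=\partial_i\partial_j(u_iu_j-b_ib_j)$ via Calder\'on--Zygmund theory and an explicit near-field/far-field decomposition of the Riesz kernel, inherits the stated decay in $|x|$. The $t^{-1/2}$ prefactor in \eqref{Press} is precisely what the self-similar scaling $p(x,t)=t^{-1}P(x/\sqrt t)$ imposes on the stationary pressure $P$.
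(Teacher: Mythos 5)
Your overall architecture (perturbed Leray system, Leray--Schauder with a homotopy parameter, blow-up for \emph{a priori} bounds, weighted Sobolev estimates, self-similar scaling) matches the paper. However, the blow-up/Liouville step as you describe it has a genuine gap, and the route to the pointwise estimates differs from the paper's in a way that is worth flagging.

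\textbf{The gap in the blow-up step.} You claim the normalized sequence $M_n^{-1}(V_n,W_n)$ converges weakly to a nonzero limit solving a ``homogeneous Euler--Leray MHD system,'' and that the positive contribution $\frac14\int(|\hat V|^2+|\hat W|^2)$ coming from $-\frac12 I-\frac12(x\cdot\nabla)$ forces $\hat V=\hat W=0$, a contradiction. Neither half of this is right. First, weak convergence does not preserve the norm, so there is no reason the weak limit is nonzero; the contradiction cannot be sought there. Second, after normalizing by $I_k$ and dividing the equations by $I_k^2$, the Laplacian and drift terms carry an explicit factor $1/I_k$ and vanish in the limit, so the limiting system for $(\hat V,\hat W)$ is the \emph{stationary Euler system} \eqref{L1} with no linear terms at all; the ``positive linear contribution'' you invoke is simply not present in the limit equation. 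What the paper actually does is pass to the limit in the \emph{energy identity} rather than in the equations: dividing \eqref{LE9}--\eqref{LE10} by $I_k^2$ produces the identity
\begin{align*}
1+\tfrac{\lambda_0}{4}\big(\|\bar V\|_2^2+\|\bar G\|_2^2\big)
=\lambda_0\Big(\textstyle\int\bar V\cdot\nabla\bar V\,U_0-\int\bar G\cdot\nabla\bar G\,U_0+\int\bar V\cdot\nabla\bar G\,B_0-\int\bar G\cdot\nabla\bar V\,B_0\Big),
\end{align*}
where the constant $1$ on the left survives exactly because $I_k^{-2}\|(\nabla V_k,\nabla G_k)\|_2^2=1$ identically. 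The nontrivial step is then to show the right-hand side vanishes. In $\R^3$ this follows by approximating $U_0,B_0$ in $L^4$ by divergence-free test functions and invoking the limit Euler equations. In a \emph{bounded} domain $\Omega$ (which is where Leray--Schauder must be run, since compact Sobolev embeddings fail on $\R^3$), one cannot approximate $U_0$ by compactly supported test functions, so one must write $\int\bar V\cdot\nabla\bar V\,U_0-\int\bar G\cdot\nabla\bar G\,U_0=-\int\nabla\bar P\cdot U_0$ and then exploit the fact that the Euler pressure is constant on $\partial\Omega$ (the paper's Lemma \ref{L0}) together with $\mathrm{div}\,U_0=0$. Your sketch does not touch this pressure-boundary argument, nor does it acknowledge that Leray--Schauder must first be applied on invading balls $B_k$ and then passed to the limit with a second, $k$-uniform \emph{a priori} bound (Lemma \ref{L6}).

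\textbf{Decay estimates.} Your plan of testing with $|x|^{2\alpha}(V,W)$ and iterating on the weight exponent, followed by a ``Taylor expansion on the sphere'' to remove the logarithmic loss, differs from the paper. The paper only goes up to a single factor of $|x|$ in the weighted $H^2$ estimate and then transfers to the parabolic picture: it treats $(V,G)$ as the time-$1$ trace of solutions of a non-stationary Stokes/heat system with self-similar forcing, and extracts decay from the pointwise Oseen and heat kernel bounds \eqref{oseen}. The removal of the logarithm under $C^{1,\alpha}_{loc}$ data does not come from a Taylor expansion; it relies on the pointwise estimate \eqref{frac} for $(-\Delta)^{\beta/2}S$ due to Dong--Li, combined with a careful decomposition of the Duhamel integral using cancellation of $\int(-\Delta)^{-\beta/2}\nabla S\,dx=0$ (see Lemma \ref{stokes}(iii)). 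Iterating the weight exponent alone is unlikely to produce the sharp $(1+|x|)^{-3}$ rate, because the weighted energy method naturally caps at order $|x|^{-3/2-}$ in $L^2$; the switch to the Oseen kernel representation is what buys the full order.

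In summary: the framework is right, but (a) the Liouville/contradiction argument as stated is not a proof and needs the energy-identity formulation plus the pressure boundary lemma; (b) the bounded-domain-plus-invading structure is essential, not optional; and (c) the sharp decay and the log-removal require kernel estimates (including the fractional one), not weight iteration and Taylor expansion.
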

		\begin{rem} 
			In Theorem \ref{T1.1}, we construct a smooth, global-times forward self-similar solution such that
			$$
			\big(u(\cdot,t),b(\cdot,t)\big)\overset{*}{\rightharpoonup}(u_{0},b_{0})
			$$
			in $\mathbf{L}^{3,\infty}(\R^{3})$ as $t\rightarrow 0^{+}$. 
			However, we do not involve uniqueness. For the large initial data, the uniqueness of forward self-similar solutions is still open to the Navier-Stokes equations \cite{JS}, and certainly for MHD equations.
		\end{rem}
		\begin{rem} 
			The pointwise estimate \eqref{T2-4} is regarded as optimal in the sense as follows: for $|x|>1$, it holds that 
			\begin{align*}
				\begin{split}
					|u(x,1)-e^{\Delta}u_{0}|\sim& |e^{\Delta}u_{0}||\nabla e^{\Delta}u_{0}|+|e^{\Delta}b_{0}||\nabla e^{\Delta}b_{0}|,\\
					|b(x,1)-e^{\Delta}b_{0}|\sim& |e^{\Delta}u_{0}||\nabla e^{\Delta}b_{0}|+|e^{\Delta}b_{0}||\nabla e^{\Delta}u_{0}|.
				\end{split}
			\end{align*}
			For the Navier-Stokes equations, the pointwise estimate \eqref{T2-4} was first established by \cite{LMZ1} under the assumption of $u_{0}\in C^{1,1}_{loc}(\R^{3}\setminus\{0\})$ . 
			More recently,  \cite{BP} and \cite{Lai} have established that the smoothness assumption can be reduced to $u_{0}\in C^{1,\alpha}_{loc}(\R^{3}\setminus\{0\})$ for any $0<\alpha\leq1$ and they have demonstrated that this condition is optimal.
		 All of our proofs rely strictly on the special structure and pointwise estimates of the Oseen and heat kernel.
		\end{rem}
		
		\begin{rem} For the half space $\R^{3}_{+}$, the existence of self-similar solutions to the Navier-Stokes equations was established by Korobkov and Tsai \cite{KT}. However, it remains unclear whether this solution exhibits the corresponding decay behavior. In our future work, we will investigate the self-similar solutions of the Navier-Stokes and MHD equations in half space.
		\end{rem}
		
	To illuminate the motivations of this paper,	let us briefly outline the main idea underlying the proof of our results.
		Assume that $(u,b)$ is a self-similar solution of equation \eqref{E1.1}, choosing $\lambda=\frac{1}{\sqrt{t}}$ in \eqref{E1.1-scal1}, $(u,b)$ can be written as
		\begin{equation}
			u(x,t)=\frac{1}{\sqrt{t}}U\Big(\frac{x}{\sqrt{t}}\Big) \ \ \mbox{and}\  \ b(x,t)=\frac{1}{\sqrt{t}}B\Big(\frac{x}{\sqrt{t}}\Big),
		\end{equation}
		where
		$$
		U(x)=u(x,1)\ \ \mbox{and} \ \ B(x)=b(x,1).
		$$
		Then one can obtain a time-independent profile $(U,B)$, which solves the following Leray system for the MHD equations
		\begin{align}\label{LS1}
			\begin{split}
				\left.
				\begin{aligned}
					-\Delta U-\frac{1}{2}U-\frac{1}{2}x\cdot\nabla U+(U\cdot\nabla)U-(B\cdot\nabla)B+\nabla P=0\\
					-\Delta B-\frac{1}{2}B-\frac{1}{2}x\cdot\nabla B+(U\cdot\nabla)B-(B\cdot\nabla)U=0\\
					\mbox{div}\,U =\mbox{div}\,B =0\\
				\end{aligned}\ \right\} \ \mbox{in}\ \R^{3}.
			\end{split}
		\end{align}
		Notice that if we multiply the first equation of \eqref{LS1} by $U$, and the second equation of \eqref{LS1} by $B$, then add the resulting equations and integrate over $\R^{3}$, we can derive
		$$
		\frac{1}{4}(\|U\|_{2}^{2}+\|B\|_{2}^{2})+\|\nabla U\|_{2}^{2}+\|\nabla B\|_{2}^{2}=0.
		$$
	The above identity implies that we cannot construct a non-trivial weak solution $(U,B)\in \mathbf{H}^1(\R^{3})$ to the system \eqref{LS1}.
	Thus, inspired by \cite{T}, we examine the difference
		$$
		V=U-U_{0}\ \ \mbox{and}\ \ G=B-B_{0},
		$$
		where 
		$$
		U_{0}=e^{\Delta}u_{0}\ \ \mbox{and}\ \ B_{0}=e^{\Delta}b_{0}.
		$$
		By the scaling property, it is easy to verify that $(V,E)$ satisfies the following \textbf{perturbed Leray system} (\textbf{PLS})
		\begin{align}\label{PLS}
			\left\{
			\begin{aligned}
				&-\Delta V-\frac{1}{2}V-\frac{1}{2}x\cdot\nabla V+(V+U_{0})\cdot\nabla(V+U_{0})-(G+B_{0})\cdot\nabla(G+B_{0})+\nabla P=0,\\&
			-\Delta G-\frac{1}{2}G-\frac{1}{2}x\cdot\nabla G+(V+U_{0})\cdot\nabla(G+B_{0})-(G+B_{0})\cdot\nabla(V+U_{0})=0,\\&
			\mbox{div}\,V =\mbox{div}\,G =0.\\
			\end{aligned}
			\right.
		\end{align}
		Our first goal is to construct a weak solution (see Definition \ref{D-0}) $(V,G)\in \mathbf{H}^1(\R^{3})$ to \eqref{PLS}. More specifically, we employ the {\em blow-up argument} to establish {\em a prior} estimate of $(V,G)$ and then utilize the {\em Leray-Schauder theorem} along with the technique of invading domains to prove the existence of weak solutions to the \textbf{PLS} in the whole space, for detail, see Theorem \ref{T1.2}. Next, we study the pointwise estimate of the weak solution $(V,G)$.
		Assume that $(u_{0},b_{0})\in \mathbf{C}^{0,1}_{loc}(\R^{3}\setminus\{0\})$, motivated by \cite{LMZ}, we can derive the following weighted estimate of $(V,G)$ by choosing the suitable test function,
		$$
			\big\|(|x|V,|x|G)\big\|_{\mathbf{H}^{1}(\R^{3})}\leq C(U_{0},B_{0}).
		$$
		Using the above estimates as a starting point, we can further obtain the following crucial estimates by means of the energy method
		\begin{align*}
			\begin{split}
				&|V(x)|+|G(x)|\leq C(1+|x|)^{-1},\\&
				|\nabla V(x)|+|\nabla G(x)|\leq C(1+|x|)^{-1}.
			\end{split}
		\end{align*}
		Next, according to the linear theory of Stokes and the heat equation, we can further improve the decay rate of the weak solution $(V,G)$ to
		\begin{align}\label{k1}
			\begin{split}
			&|V(x)|+|G(x)|\leq C(1+|x|)^{-3}\log(2+|x|),\\&
				|\nabla V(x)|+
				|\nabla G(x)|\leq C(1+|x|)^{-3}.
			\end{split}
		\end{align}
	Moreover, if $(u_{0},b_{0})\in \mathbf{C}^{1,\alpha}_{loc}(\R^{3}\setminus\{0\})$ for any $0<\alpha\leq1$, with the help of a rigorous pointwise estimate of the Oseen kernel allows us to eliminate the logarithmic loss present in \eqref{k1} to obtain the optimal estimate of $(V,G)$:
		$$
		|V(x)|+|G(x)|\leq C(1+|x|)^{-3}.
		$$
		For detail, see Theorem \ref{T1-1} in Section 3. Additionally, since
		$$
			 P=(-\Delta)^{-1}\mbox{divdiv}\Big((V+U_{0})\otimes(V+U_{0})-(G+B_{0})\otimes(G+B_{0})\Big),
		$$
		 the decay estimate of $P(x)$ can be derived using classical potential theory, provided that $(u_{0},b_{0})\in \mathbf{C}^{1,1}_{loc}(\R^{3}\setminus\{0\})$.
		 Then, by applying the scaling property, we can directly  deduce the existence and pointwise estimate of the self-similar solution, as stated in Theorem \ref{T1.1}.

		At the end of this section, we provide the definition of the weak solution to the \textbf{PLS} \eqref{PLS} and some notations.
	\begin{defn}(Weak solution)\label{D-0}
		A triple $(V,G,P)$ is called a weak solution of \eqref{PLS} in $\R^{3}$, if
			\begin{enumerate}
			\item[(i)] $(V,G)\in \mathbf{H}^{1}_{\sigma}(\R^{3})$ and $P\in L^{2}(\R^{3})$;
			\item[(ii)] $(V,G,P)$ satisfies \eqref{PLS}  in the sense of distribution in $\R^{3}$, i.e., for all vector fields $(\varphi,\psi)\in \mathbf{H}^{1}(\R^{3})$ with $\big\|\big(|\cdot|\varphi,|\cdot|\psi\big)\big\|_{2}<\infty$, the triple $(V,G,P)$ fulfills
			\begin{align}\label{W1}
				\begin{split}
					&\int_{\R^{3}}\nabla V:\nabla\varphi dx-\frac{1}{2}\int_{\R^{3}}V\cdot\varphi dx-\frac{1}{2}\int_{\R^{3}}x\cdot\nabla V\cdot\varphi dx-\int_{\R^{3}}P\mathrm{div}\varphi dx\\&=\int_{\R^{3}}\Big(-(V+U_{0})\cdot\nabla(V+U_{0})+(G+B_{0})\cdot\nabla(G+B_{0})\Big)\cdot\varphi dx
				\end{split}
			\end{align}
			and
			\begin{align}\label{W2}
				\begin{split}
					\int_{\R^{3}}&\nabla G:\nabla\psi dx-\frac{1}{2}\int_{\R^{3}}G\cdot\psi dx-\frac{1}{2}\int_{\R^{3}}x\cdot\nabla G\cdot\psi dx\\&=\int_{\R^{3}}\Big(-(V+U_{0})\cdot\nabla(G+B_{0})+(G+B_{0})\cdot\nabla(V+U_{0})\Big)\cdot\psi dx.
				\end{split}
			\end{align}
			\end{enumerate}
	\end{defn}
		\noindent\textbf{Notation:}
		We denote $B_{R}(x_{0})$ as a ball centered at $x_{0}$ with radius $R$, and $B_{R}:=B_{R}(0)$.
		Let
		$u,v:\R^{3}\rightarrow\R^{3}$ be two vector fields, the $i^{th}$-difference quotient of size $h$ is defined as follows:
		$$
		D_{i}^{h}u(x)=\frac{u(x+he_{i})-u(x)}{h},\ \ h\in\R,\ h\ne0.
		$$	 We denote $u\otimes v=(u_{i}v_{j})_{1\leq i,j\leq3}$ is a second order tensor. Throughout this paper, we use the Einstein summation convention. Thus, we write $(v\cdot\nabla v)_{j}=v_{i}\partial_{i}v_{j}$.
		For a matrix valued function $f=(f_{ij})$, $(\mbox{div} f)_{i}=\partial_{j}f_{ij}$; for two matrices $C$ and $D$,
		$$
		C:D=C_{ij}D_{ij},\ \ \mbox{and}\ \ |C|=(C:C)^{\frac{1}{2}}.
		$$
		The subscript $\sigma$ indicates that divergence-free conditions, for example,
		$$
		C_{0,\sigma}^{\infty}(\Omega)=\{u\in C_{0}^{\infty}(\Omega)|\mbox{div}\,u=0\}
		$$
		and
		$$
		L_{\sigma}^{p}(\Omega)=\mbox{the closure of $C_{0,\sigma}^{\infty}(\Omega)$ under the norm $L^{p}$}.
		$$
		Let $X$ be a Banach space, $X^{\prime}$ represent the dual space of $X$, and $\langle\cdot,\cdot\rangle$ be the dual pairing of $X$ and $X^{\prime}$. In this paper, we use bold symbols to denote
		function space of 6-dimensional vector-valued functions. For example, 
		$$
		\mathbf{L}^{2}(\Omega)=L^{2}(\Omega)^{3}\times L^{2}(\Omega)^{3},\ \ \mathbf{L}_{\sigma}^{2}(\Omega)=L_{\sigma}^{2}(\Omega)\times L_{\sigma}^{2}(\Omega).
		$$
		In order to simplify, for $u,v\in X$, we define
		$$
		\|(u,v)\|_{\mathbf{X}}=\|u\|_{X}+\|v\|_{X}
		$$
		and
		$$
		\|(u,v)\|_{\mathbf{X}}^{2}=\|u\|_{X}^{2}+\|v\|_{X}^{2}.
		$$
		If $\Omega=\R^{n}$, for simplicity, we denote $\|u\|_{p}=\|u\|_{L^{p}(\Omega)}$ and $\|(u,v)\|_{p}=\|(u,v)\|_{\mathbf{L}^{p}(\Omega)}$. We denote $C$ as an absolute constant, while $C(\alpha_{1},\alpha_{2},\ldots,\alpha_{n})$ indicates that the constant depends on the parameters $\alpha_{1}, \alpha_{2}, \ldots, \alpha_{n}$.
		
		This paper is organized as follows: in Sect. 2, we provide some preliminaries,
		which includes some basic knowledge of Lorentz space, some standard facts about the Stokes operator, and several useful lemmas. In Sect. 3, we establish the existence of a weak solution to the perturbed Leray system and obtain its decay estimate by improving the regularity of the weak solution, which is the core of this paper.
		 Finally, we prove Theorem \ref{T1.1} in Sect. 4.
		\setcounter{equation}{0}
		\setcounter{equation}{0}
		\section{Preliminaries}
		\subsection{The Lorentz space}
		Let $\Omega\subseteq\R^{3}$ be a measurable set. For $1<p<+\infty$ and $1\leq q\leq+\infty$, the Lorentz space $L^{p,q}(\Omega)$ consists of all measurable functions with a finite norm
		\begin{align*}
			\|u\|_{L^{p,q}(\Omega)}=\left \{
			\begin{array}{ll}
				\Big(\int_{0}^{\infty}\big(t^{\frac{1}{p}}u^{\ast}(t)\big)^{\frac{1}{q}}\frac{dt}{t}\Big)^{\frac{1}{q}},\ \ & {\rm if}\ \ 1< q<\infty,\\
				\sup_{t>0}t^{\frac{1}{p}}u^{\ast}(t),\ \ & {\rm if}\ \ q=\infty,
			\end{array}
			\right.
		\end{align*}
		where $u^{\ast}(t)=\inf\{\varrho:|\{x\in\Omega:|u(x)|>\varrho\}|\leq t\}$ is nonincreasing and right semicontinuous. When $q=\infty$, an equivalent norm for the weak-$L^{p}$ space is
		$$
		\|u\|_{L^{p,\infty}(\Omega)}=\sup_{\varrho>0}\varrho|\{x\in\Omega:|u(x)|>\varrho\}|^{\frac{1}{p}}.
		$$
		In particular, for all $q\in[1,+\infty]$, we have $L^{q,q}(\Omega)$ is equivalent to the standard Lebesgue integrable function space $L^{q}(\Omega)$. 	Moreover, the Lorentz space can also be defined through interpolation theory (\cite{AF},\cite{LG1},\cite{LR}) by
		\begin{equation}\label{A-1}
			L^{p,q}(\Omega)=\big(L^{p_{0}}(\Omega),L^{p_{1}}(\Omega)\big)_{\theta,q}
		\end{equation}
		provided $1<p_{0}<p<p_{1}<\infty$ and $\frac{1}{p}=\frac{1-\theta}{p_{0}}+\frac{\theta}{p_{1}}$ for some $\theta\in(0,1)$. We can get various interesting properties from interpolation theory. For example
		\begin{equation}\label{A-01}
				L^{1}(\Omega)\cap  L^{\infty}(\Omega)\ \ \mbox{is dense in}\ L^{p,q}(\Omega),\ \ \ \forall\ 1<p<\infty \ \mbox{and}\ 1\leq q<\infty.
		\end{equation}
		Next, we recall the dual space of $L^{p,1}(\Omega)$, for $1<p<\infty$,
		$$
		\big(L^{p,1}(\Omega)\big)'=L^{p',\infty}(\Omega),\ \ p'=\frac{p}{p-1}.
		$$
		The definition of the dual space of a Lorentz space is quite complex, for other cases, see \cite{LG2} for details. The space $BC_{w}([0,\infty);L^{3,\infty}(\R^{3}))$ consists of functions that are bounded and weak-$\ast$ continuous in $L^{3,\infty}(\R^{3})$ with respect to time variable $t$.
		
			For $q\geq1$, we denote $D^{1,q}$ is the homogeneous Sobolev space of the form
		$$
		D^{1,q}(\Omega):=\{f\in W_{loc}^{1,q}(\Omega)| \nabla f\in L^{q}(\Omega)\}.
		$$
		We now introduce some fundamental inequalities in Lorentz spaces that are frequently used in this paper.
		\begin{lem}[\cite{AF},\cite{LG1},\cite{LG2},\cite{LR}]\label{HYI}
		\leavevmode\par
				\begin{enumerate}
				\item[(i)](H\"{o}lder inequality) Let $0<p,p_{1},p_{2}\leq\infty$, $0<q,q_{1},q_{2}\leq\infty$ satisfies $\frac{1}{p}=\frac{1}{p_{1}}+\frac{1}{p_{2}}$ and $\frac{1}{q}=\frac{1}{q_{1}}+\frac{1}{q_{2}}$. Then we have
				$$
				\|fg\|_{L^{p,q}(\R^{3})}\leq C(p_{1},p_{2},q_{1},q_{2})\|f\|_{L^{p_{1},q_{1}}(\R^{3})}\|g\|_{L^{p_{2},q_{2}}(\R^{3})}.
				$$
				\item[(ii)](Young's inequality) Let $1<p,p_{1},p_{2}<\infty$, $0<q,q_{1},q_{2}\leq\infty$ satisfies $\frac{1}{p}+1=\frac{1}{p_{1}}+\frac{1}{p_{2}}$ and $\frac{1}{q}=\frac{1}{q_{1}}+\frac{1}{q_{2}}$. Then we have
				$$
				\|f\ast g\|_{L^{p,q}(\R^{3})}\leq C(p_{1},p_{2},q_{1},q_{2})\|f\|_{L^{p_{1},q_{1}}(\R^{3})}\|g\|_{L^{p_{2},q_{2}}(\R^{3})}.
				$$
				\item[(iii)](Young's inequality at endpoint) Let $1<p<\infty$, $1\leq q\leq\infty$, satisfies  $\frac{1}{p}+\frac{1}{p_{1}}=1$ and $\frac{1}{q}+\frac{1}{q_{1}}=1$. Then we have
				$$
				\|f\ast g\|_{L^{\infty}(\R^{3})}\leq C(p,q,p_{1},q_{1})\|f\|_{L^{p,q}(\R^{3})}\|g\|_{L^{p_{1},q_{1}}(\R^{3})}
				$$
				and
				$$
				\|f\ast g\|_{L^{p,q}(\R^{3})}\leq C(p,q)\|f\|_{L^{1}(\R^{3})}\|g\|_{L^{p,q}(\R^{3})}.
				$$
				\end{enumerate}
		\end{lem}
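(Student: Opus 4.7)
The plan is to derive all three inequalities by real interpolation from their classical Lebesgue-space counterparts, which is the standard route taken in the references cited.

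First I would record the base bilinear Lebesgue estimates: for (i), the ordinary H\"{o}lder inequality $\|fg\|_{p}\leq \|f\|_{p_{1}}\|g\|_{p_{2}}$ whenever $1/p = 1/p_{1}+1/p_{2}$; for (ii), the ordinary Young convolution inequality $\|f\ast g\|_{p}\leq \|f\|_{p_{1}}\|g\|_{p_{2}}$ whenever $1+1/p = 1/p_{1}+1/p_{2}$; and for (iii), the endpoint Young estimate $\|f\ast g\|_{\infty}\leq \|f\|_{p}\|g\|_{p'}$. In each case this identifies a bilinear operator---either pointwise product or convolution---that is bounded between suitable Lebesgue spaces with operator norm one.

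Next, I would invoke the identification \eqref{A-1}, namely $L^{p,q}(\R^{3})=\big(L^{p_{0}}(\R^{3}),L^{p_{1}}(\R^{3})\big)_{\theta,q}$, together with the bilinear real interpolation theorem (the $K$-method version, as found in Bergh--L\"{o}fstr\"{o}m or Grafakos). For (i) and (ii), I would apply this theorem once in the first slot and once in the second slot of the bilinear map, thereby promoting the Lebesgue bounds to Lorentz bounds with the stated primary indices. The H\"{o}lder-type relation $1/q = 1/q_{1}+1/q_{2}$ on the secondary indices arises naturally from the H\"{o}lder inequality satisfied by the $K$-functionals along the real-interpolation scale.

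For the endpoint (iii), the $L^{\infty}$ bound follows directly from the duality $\big(L^{p,q}(\R^{3})\big)' = L^{p',q'}(\R^{3})$ combined with translation invariance of convolution: for each fixed $x\in\R^{3}$, $(f\ast g)(x)=\int f(x-y)g(y)\,dy$ is a bounded bilinear form in $(f,g)$ with uniform norm, which is exactly the desired estimate. The second inequality $\|f\ast g\|_{L^{p,q}} \leq C\|f\|_{1}\|g\|_{L^{p,q}}$ is obtained by fixing $f\in L^{1}$ and noting that convolution with an $L^{1}$ kernel is bounded on every $L^{r}$ with operator norm $\|f\|_{1}$ by the scalar Young inequality; real-interpolating in the second argument then transfers the bound to $L^{p,q}$. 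The main delicacy would be the careful verification of compatibility between the two pairs of admissible Lebesgue exponents and the secondary Lorentz exponents in the bilinear interpolation theorem, but since these are the classical Hunt--O'Neil inequalities, I would ultimately defer to the detailed treatments in \cite{AF,LG1,LG2,LR} rather than reproduce the interpolation machinery in full.
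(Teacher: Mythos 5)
The paper offers no proof of this lemma at all: it is stated as a known result with a bare citation to \cite{AF,LG1,LG2,LR}, so there is no internal argument to compare against. Your sketch is a legitimate outline of how one \emph{could} establish these Hunt--O'Neil estimates, but it is worth noting that the cited references do not actually proceed by bilinear real interpolation. Items (i) and (ii) are proved there directly from O'Neil's rearrangement inequalities, namely $(fg)^{*}(t)\leq f^{*}(t/2)\,g^{*}(t/2)$ for the pointwise product and
\[
(f\ast g)^{*}(t)\leq t\,f^{**}(t)\,g^{**}(t)+\int_{t}^{\infty}f^{*}(s)\,g^{*}(s)\,ds
\]
for the convolution, followed by weighted Hardy-type estimates on $(0,\infty)$ that produce the secondary-index relation $1/q = 1/q_{1}+1/q_{2}$ with no interpolation machinery. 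The rearrangement route is genuinely cleaner here, because the bilinear real-interpolation theorem you invoke only yields the secondary exponent as an inequality $1/q\leq 1/q_{1}+1/q_{2}$ under conditions that require some care when the two slots carry different primary exponents $p_{1}\neq p_{2}$; making the interpolation argument airtight for the full stated range would require a diagonal bilinear theorem along the lines of Bergh--L\"ofstr\"om, which you correctly flag as ``delicate'' and defer. Your treatment of the endpoint item (iii) is sound: the $L^{\infty}$ bound is exactly Lorentz-space H\"older applied pointwise in $x$ (equivalently duality $(L^{p,q})'=L^{p',q'}$), and the second estimate does follow by fixing $f\in L^{1}$ and interpolating the resulting convolution operator between two Lebesgue endpoints.
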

		\begin{lem}[\cite{LG1}](Sobolev-Gagliardo-Nirenberg embedding)\label{SGN}
			Let $1\leq p<n$. Then there exists a constant $C=C(n,p)$ such that for every function $u\in D^{1,p}(\R^{n})$ vanishing at infinity,
			$$
			\|u\|_{L^{\frac{np}{n-p},p}(\R^{n})}\leq C\|\nabla u\|_{L^{p}(\R^{n})}.
			$$
		\end{lem}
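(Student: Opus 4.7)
The plan is to derive this sharpened Sobolev embedding from the Riesz potential representation of $u$ combined with the Young convolution inequality in Lorentz spaces (Lemma \ref{HYI}(ii)). First, for $u\in C_{c}^{\infty}(\R^{n})$, the fundamental theorem of calculus applied along rays emanating from $x$, averaged over the unit sphere, produces the identity
$$u(x)=\frac{1}{n\omega_{n}}\int_{\R^{n}}\frac{(x-y)\cdot\nabla u(y)}{|x-y|^{n}}\,dy,$$
so $u=K\ast\nabla u$ where the convolution kernel satisfies $|K(x)|\leq c\,|x|^{1-n}$. A direct computation of the distribution function $|\{|x|^{1-n}>\varrho\}|\sim\varrho^{-n/(n-1)}$ confirms that $K\in L^{\frac{n}{n-1},\infty}(\R^{n})$.

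Next, I would invoke Lemma \ref{HYI}(ii) using the identification $L^{p}(\R^{n})=L^{p,p}(\R^{n})$. Taking $f=\nabla u\in L^{p,p}$ and $g=K\in L^{\frac{n}{n-1},\infty}$, the scaling relations
$$\frac{1}{r}+1=\frac{1}{p}+\frac{n-1}{n},\qquad \frac{1}{s}=\frac{1}{p}+\frac{1}{\infty}$$
force $r=\frac{np}{n-p}$ and $s=p$ exactly, which yields
$$\|u\|_{L^{\frac{np}{n-p},p}(\R^{n})}\leq C(n,p)\,\|\nabla u\|_{L^{p}(\R^{n})}$$
for every $u\in C_{c}^{\infty}(\R^{n})$.

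Finally, I would extend the inequality to the full class of $u\in D^{1,p}(\R^{n})$ vanishing at infinity by a density argument: approximate $\nabla u$ in $L^{p}$ by a sequence $\nabla u_{k}$ with $u_{k}\in C_{c}^{\infty}$ (via truncation and mollification, which preserves the vanishing-at-infinity property), extract an almost-everywhere convergent subsequence, and pass to the limit through the lower semicontinuity of the Lorentz quasi-norm under a.e.\ convergence (applied to the decreasing rearrangement).

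The main obstacle is relatively mild and purely technical: one must carefully justify admissibility of the endpoint $q_{2}=\infty$ in the Lorentz Young inequality and verify that the density/truncation step goes through for functions in $D^{1,p}$ without any a priori integrability assumption on $u$ itself beyond decay at infinity. Both points are classical and are handled in detail in the standard references \cite{LG1}.
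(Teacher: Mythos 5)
The paper cites \cite{LG1} for this lemma and gives no proof of its own, so there is no in-paper argument to compare against. Your potential-theoretic route---writing $u=K\ast\nabla u$ with $K\in L^{\frac{n}{n-1},\infty}(\R^n)$ and then invoking the Lorentz-space Young inequality---is a standard way to obtain the embedding for $1<p<n$, and the density extension to $D^{1,p}$ vanishing at infinity can be made to work along the lines you sketch (though to control the cutoff term $u\,\nabla\chi_R$ you need the classical $L^{np/(n-p)}$ bound on $u$ itself, not just convergence of $\nabla u_k$).

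The genuine gap is the endpoint $p=1$, which the statement includes but your argument does not cover. Your application of Lemma \ref{HYI}(ii) sets $p_1=q_1=p$, and that lemma requires $p_1>1$. This restriction is not cosmetic: the convolution bound $L^{1}\ast L^{\frac{n}{n-1},\infty}\to L^{\frac{n}{n-1},1}$ is false. Take $g(x)=|x|^{1-n}$, which lies in $L^{\frac{n}{n-1},\infty}(\R^n)$ but not in $L^{\frac{n}{n-1},1}(\R^n)$, and let $f_\varepsilon$ be an $L^1$-normalized approximate identity; then $f_\varepsilon\ast g\to g$ a.e., so a uniform bound $\|f_\varepsilon\ast g\|_{L^{n/(n-1),1}}\le C\|f_\varepsilon\|_{1}\|g\|_{L^{n/(n-1),\infty}}$, together with the same Fatou property of rearrangements you invoke in your density step, would force $g\in L^{\frac{n}{n-1},1}(\R^n)$---a contradiction. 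For $p=1$ the lemma is nevertheless true, but it requires a different argument, typically the coarea formula combined with the isoperimetric inequality: $\|u\|_{L^{n/(n-1),1}}$ is comparable to $\int_{0}^{\infty}|\{|u|>s\}|^{\frac{n-1}{n}}\,ds$, which is bounded by $C\int_{0}^{\infty}\mathcal{H}^{n-1}(\partial\{|u|>s\})\,ds=C\|\nabla u\|_{L^1}$. By contrast, the endpoint $q_2=\infty$ that you flagged as your main worry is harmless: that is exactly the weak-type slot O'Neil's theorem is built to handle, and it is already permitted by the stated hypotheses of Lemma \ref{HYI}(ii); the dangerous endpoint is $p_1=1$.
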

	\subsection{The fundamental solution of the non-stationary Stokes equations}
	Let us start by recalling the Gauss-Weierstrass kernel
	$$
	\Gamma_{t}(x)=\Gamma(x,t)=\frac{1}{(4\pi t)^{\frac{3}{2}}}e^{-\frac{|x|^{2}}{4t}},\ \ \ (x,t)\in\R^{3}\times(0,+\infty),
	$$
	which is self-similar and in particular
	\begin{equation*}
		\Gamma_{t}(x)=t^{-\frac{3}{2}}\Gamma_{1}\Big(\frac{x}{\sqrt{t}}\Big).
	\end{equation*}
	Let $\mathbb{Z}^{+}=\{0,1,2,...\}$ be the set of non-negative integers, we have
	\begin{equation}\label{heat1}
		|D^{k}\Gamma_{1}(x)|\leq C(k)(1+|x|)^{-5-k},\ \ \forall\ k\in\mathbb{Z}^{+}.
	\end{equation}
	Thus, we can immediately obtain
	the well-known pointwise estimate of the heat kernel
	\begin{equation}\label{heat2}
		|\partial_{t}^{l}D_{x}^{k}\Gamma_{t}(x)|\leq C(k,l)t^{-l}(\sqrt{t}+|x|)^{-3-k},\ \ \forall  \ k,l\in\mathbb{Z}^{+}.
	\end{equation}
	Next, we consider the nonstationary Stokes system as follows:
		\begin{align}\label{stokes-1}
			\begin{split}
				\left.
				\begin{aligned}
					\partial_{t} v-\Delta v+\nabla q=f\\
					\mbox{div}\,v=0\\
					v(x,0)=0\\
				\end{aligned}\ \right\}\ \mbox{in}\ \R^{3}\times (0,+\infty),
			\end{split}
		\end{align}
		where $f(x,t)=0$ for $t<0$. It is well-know that a solution $(v,q)$ has the form
		$$
		v(x,t)=\int_{0}^{t}e^{(t-s)\Delta}\mathbb{P}f(\cdot,s)ds=\int_{0}^{t}\int_{\R^{n}}S_{ij}(x-y,t-s)f_{j}(y,s)dyds
		$$
		and
		$$
		q(x,t)=-\int_{\R^{3}}\partial_{j}\Phi(x-y)f_{j}(y,t)dy,
		$$
		where
		$$
		\mathbb{P}=\mbox{Id}-\nabla(-\Delta)^{-1}\mbox{div}
		$$
		is the Leray projector,  $\Phi(x)=\frac{1}{4\pi|x|}$  is the fundamental solution of $-\Delta$ in $\R^{3}$, and
		$$
		S(x,t)=S_{ij}(x,t)=\delta_{ij} \Gamma(x,t)+\partial_{i}\partial_{j}\int_{\R^{3}}\Phi(x-y)G(y,t)dy
		$$
		is the Oseen kernel \cite{O}. It is similar to the heat kernel, for the Oseen kernel, we also have
		$$
		S_{ij}(x,t)=t^{-\frac{3}{2}}S_{ij}\Big(\frac{x}{\sqrt{t}}\Big).
		$$			
		and
		\begin{equation}\label{oseen}
			|\partial_{t}^{l}D_{x}^{k}S(x,t)|\leq C(k,l)t^{-l}(\sqrt{t}+|x|)^{-3-k},\ \ \forall  \ k,l\in\mathbb{Z}^{+}.
		\end{equation}
		This pointwise estimate was first proved by \cite{VAS}. Moreover, Dong and Li \cite{DL} was also studied  the generalized
		Oseen kernel and
		establish the following pointwise estimate
		$$
		|D^{k}(-\Delta)^{\frac{\beta}{2}}S(x,1)|\leq C(k,\beta)(1+|x|)^{-3-k-\beta},\ \ \beta\in(-1,1].
		$$
		According to scaling property and \eqref{oseen}, we can derive
		\begin{equation}\label{frac}
			|D_{x}^{k}(-\Delta)^{\frac{\beta}{2}}S(x,t)|\leq C(k,\beta)(\sqrt{t}+|x|)^{-3-k-\beta},\ \ \beta\in(-1,1].
		\end{equation}	
		Here, $(-\Delta)^\alpha$ is a pseudodifferential operator defined in terms of the Fourier transform
		$$
		\widehat{(-\Delta)^\alpha} u(\xi)=|\xi|^{2\alpha} \widehat{u}(\xi), u \in \mathcal{S}(\R^{n}),
		$$
		where $\mathcal{S}(\R^{n})$ represents the Schwartz space of smooth functions that decrease rapidly, either real or complex-valued. 
		Alternatively, if $0<\alpha\leq1$, we can equivalently define 
		the $(-\Delta)^\alpha$ as follows:
		$$
		(-\Delta)^\alpha u(x) \triangleq c_{n, \alpha} \text { p.v. } \int_{\mathbb{R}^n}(u(x)-u(z)) \frac{\mathrm{d} z}{|x-z|^{n+2 \alpha}},\  u \in \mathcal{S}(\R^{n}),
		$$
		where p.v. stands for the Cauchy principle value and $c_{n, \alpha}$ is a normalization constant.
		For more information on fractional Laplacian $(-\Delta)^\alpha$, interested readers are referred to \cite{GN}.
		The pointwise estimate \eqref{frac} plays a key role in removing the logarithmic loss 
	   of the self-similar solutions, see Lemma \ref{stokes} in Section 3.
		\subsection{Some useful lemmas}
	In this subsection, we present several important lemmas that are useful in proving the existence and pointwise estimates of self-similar solutions.
	Consider a function $f(x)$ which is homogeneous of degree $-1$, i.e.
	$$
	f(x)=\frac{\kappa(\bar{x})}{|x|}, \ \ \bar{x}=\frac{x}{|x|},
	$$
	we denote 
	$$
	f_{I}(x,t)=e^{t\Delta}f=\Gamma_{t}\ast f \ \ \mbox{and} \ \  F(x)=f_{I}(x,1)=\Gamma_{1}\ast f.
	$$
	Next, we will investigate some properties of $f_{I}(x,t)$ and $F(x)$.
		\begin{lem}\label{inital}
			Assume that $f\in L^{\infty}_{loc}(\R^{3}\setminus\{0\})$ be homogeneous of degree $-1$, with $\mathrm{div} f=0$.  Then,
			\begin{enumerate}
				\item[(i)]for all $x\in\R^{3}$, 
				$F(x)\in C^{\infty}(\R^{3})$, $\mathrm{div} F=0$ and
				\begin{equation}\label{inital-1}
					|\nabla^{k} F(x)|\leq C(k)(1+|x|)^{-1},\ \ \forall \ k\in\mathbb{Z}^{+}.
				\end{equation}
				Moreover, for all $(x,t)\in\R^{3}\times(0,\infty)$, we have
				\begin{equation}\label{inital-1-1}
					f_{I}(x,t)\in BC_{w}([0,+\infty),L^{3,\infty}(\R^{3}));
				\end{equation}
				\item[(ii)]if $f\in C_{loc}^{0,1}(\R^{3}\setminus\{0\})$, i.e. $\kappa(x)=\kappa(x/|x|)\in C^{0,1}(\mathbb{S}^{2})$, then, we have
				\begin{equation}\label{inital-2}
					|\nabla^{k}F(x)|\leq C(1+|x|)^{-2},\ \ \forall \ k\geq1,
				\end{equation}
				where the constant $C$ depend on $k$ and $\|\kappa\|_{C^{0,1}(\mathbb{S}^{2})}$.
				\item[(iii)]if $f\in C_{loc}^{1,\alpha}(\R^{3}\setminus\{0\})$ with $0<\alpha\leq1$, then, for any $\beta\in(0,\alpha)$
				\begin{equation}\label{inital-3}
					|\nabla^{k}(-\Delta)^{\frac{\beta}{2}}F(x)|\leq C(\alpha,\beta)(1+|x|)^{-1-k-\beta},\ \ k=0,1.
				\end{equation}
				In particular, if $\alpha=1$, then
				\begin{equation}\label{inital-4}
					|\nabla^{2}F(x)|\leq C(1+|x|)^{-3}.
				\end{equation}
			\end{enumerate}
		\end{lem}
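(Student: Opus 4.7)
The plan is to exploit the convolution representation $F = \Gamma_1 * f$ together with the homogeneity of $f$ and the sharp pointwise bounds \eqref{heat1}, \eqref{frac} on $\Gamma_1$. Since $|f(x)| = |\kappa(\bar x)|/|x| \le C/|x|$, we have $f \in L^{3,\infty}(\R^3)$; smoothness of $F$ follows from smoothness of $\Gamma_1$, and $\mathrm{div} F = \Gamma_1 * \mathrm{div} f = 0$ in the sense of distributions. For the decay $|\nabla^k F(x)| \leq C(1+|x|)^{-1}$ when $|x|\geq 1$, I split the convolution into three regions: a near region $\{|y-x| \leq |x|/2\}$ (where $|y|\sim|x|$, so $|f(y)| \leq C/|x|$ and $\int|\nabla^k\Gamma_1|\,dy$ is finite), an intermediate region absorbed by Gaussian decay against $1/|y|$, and a far region $\{|y|\geq 2|x|\}$ (also handled by the Gaussian tail). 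For the weak-$\ast$ statement, the self-similar identity $f_I(x,t) = t^{-1/2}F(x/\sqrt t)$ combined with the pointwise bound gives $|f_I(x,t)| \le C(\sqrt t + |x|)^{-1}$, hence a uniform $L^{3,\infty}$ bound; strong continuity at $t>0$ follows from dominated convergence, and weak-$\ast$ convergence to $f$ as $t\to 0^+$ from density of $L^{3/2,1}$ test functions together with standard heat-semigroup continuity.

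For parts (ii) and (iii), the key is a cancellation trick: for $k\geq 1$, $\int \nabla^k \Gamma_1(z) P(z)\,dz = 0$ for any polynomial $P$ of degree $<k$ (integration by parts), allowing me to subtract a Taylor polynomial of $f$ at $x$. Under $f \in C^{0,1}_{loc}$, I write
$$\nabla^k F(x) = \int \nabla^k \Gamma_1(x-y)[f(y) - f(x)]\,dy \qquad (k \geq 1),$$
and bound $|f(y)-f(x)| \leq C|y-x|/|x|^2$ on the near region by combining Lipschitz regularity of $\kappa$ on $\mathbb{S}^2$ with the homogeneity $f(y)=\kappa(\bar y)/|y|$; the extra factor $|y-x|$ absorbed against $\nabla^k \Gamma_1$ yields the $1/|x|^2$ bound. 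In part (iii) with $\alpha = 1$, one more order of Taylor subtraction,
$$\nabla^2 F(x) = \int \nabla^2 \Gamma_1(x-y)\bigl[f(y) - f(x) - \nabla f(x) \cdot (y-x)\bigr]\,dy,$$
together with the $C^{1,1}$ near-field estimate $|f(y)-f(x)-\nabla f(x)(y-x)| \leq C|y-x|^2/|x|^3$, delivers the $(1+|x|)^{-3}$ bound.

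For the fractional estimate $|\nabla^k(-\Delta)^{\beta/2}F(x)| \leq C(1+|x|)^{-1-k-\beta}$ with $\beta\in(0,\alpha)$, I intend to commute the Fourier multiplier with the convolution: $(-\Delta)^{\beta/2}F = \Gamma_1 * g$, where $g := (-\Delta)^{\beta/2}f$ is, on $\R^3\setminus\{0\}$, homogeneous of degree $-1-\beta$ and retains $C^{1,\alpha-\beta}$-type regularity on $\mathbb{S}^2$ inherited from $\kappa$. The splitting argument of part (i) applied to $g$ with its sharper homogeneity gives $(1+|x|)^{-1-\beta}$, and one application of the Lipschitz cancellation of part (ii) to $g$ yields $(1+|x|)^{-2-\beta}$. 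The main obstacle will be making the fractional step rigorous: one must justify that $(-\Delta)^{\beta/2}f$ is a well-defined locally bounded function on $\R^3\setminus\{0\}$ with the claimed homogeneity and spherical regularity---the strict inequality $\beta<\alpha$ being essential for the singular integral defining $(-\Delta)^{\beta/2}$ to converge absolutely. An equivalent route is to work directly with the kernel $\nabla^k(-\Delta)^{\beta/2}\Gamma_1$ via \eqref{frac}, verifying cancellation against low-order polynomials by the order of vanishing of the symbol $|\xi|^\beta e^{-|\xi|^2/4}$ at $\xi=0$, then repeating the Taylor-subtraction scheme.
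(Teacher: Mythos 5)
Your proposal is correct in substance, but it takes a genuinely different route from the paper's alternative proof. For parts (i) and (ii) the paper avoids any near/far region splitting: it integrates by parts once to transfer a derivative onto $f$ (so that $\nabla^k F(x)=\int\nabla^{k-1}\Gamma_1(x-y)\nabla f(y)\,dy$), then expands $|x|^2=|x-y|^2+|y|^2+2(x-y)\cdot y$ and estimates each resulting moment of $\Gamma_1$ against $1/|y|$ or $1/|y|^2$ by H\"older's inequality in Lorentz spaces, yielding the weighted bound in one stroke. Your scheme instead relies on the vanishing moments $\int\nabla^k\Gamma_1\,dz=0$ to subtract a Taylor polynomial of $f$ at $x$, and on near/intermediate/far decomposition to control the errors. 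Both methods ultimately exploit the same piece of extra information (one more derivative of $f$ decaying like $|y|^{-2}$), just packaged differently: the paper's argument is shorter once one is fluent in Lorentz-space interpolation, while yours is more elementary and makes the cancellation mechanism explicit. Note also that the paper does not actually prove part (iii) here---it defers to \cite{Lai,LMZ,LMZ1}---whereas you sketch a self-contained argument; your second alternative (working directly with the kernel $\nabla^k(-\Delta)^{\beta/2}\Gamma_1$ and the bound \eqref{frac}, rather than commuting $(-\Delta)^{\beta/2}$ with the convolution) is the safer path, since it sidesteps the delicate question of whether $(-\Delta)^{\beta/2}f$ inherits the claimed spherical regularity; if you pursue that route, be careful that for non-integer $\beta$ the symbol $|\xi|^\beta e^{-|\xi|^2/4}$ is not smooth at the origin, so only the zeroth moment of $\nabla^k(-\Delta)^{\beta/2}\Gamma_1$ is guaranteed to vanish, which is nonetheless all you need for the stated $k=0,1$ estimates.
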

		\begin{proof}
			The proof of this lemma is essential, for detail, pleas see \cite{Lai,LMZ,LMZ1}.
			
			Here, we provide an alternative proof of the estimates \eqref{inital-1} and \eqref{inital-2} that is more straightforward.
			We  will focus solely on the proof of \eqref{inital-2}, as \eqref{inital-1} can be obtained in a similar way and is, in fact, simpler.  
			Thanks to the rapid decay of the exponential function, it follows that yields
			$$
			|x|^{k}\Gamma_{1}(x)\in L^{p}(\R^{3}), \ \ \mbox{for all}\ \ 1\leq p\leq\infty\ \ \mbox{and}\ \ k\geq0.
			$$
			Thus, for all $1<p<\infty$ and $1\leq q\leq\infty$, we have
			$$
			|x|^{k}\Gamma_{1}(x)\in L^{p,q}(\R^{3}),
			$$
			where we use the fact \eqref{A-01}. 
			Since
			$$
			F(x)=\int_{\R^{3}}\Gamma_{1}(x-y)f(y)dy.
			$$
			On one hand, by applying Lemma \ref{HYI} we can drive
			\begin{align*}
				\begin{split}
					|\nabla^{k}F(x)|&\leq C\sum_{l=0}^{k}\int_{\R^{3}}|x-y|^{l}\Gamma_{1}(x-y)|f(y)|dy\\&\leq C\sum_{l=0}^{k}\Big\||\cdot|^{l}\Gamma_{1}\Big\|_{L^{\frac{3}{2},1}(\R^{3})}\Big\|\frac{1}{|\cdot|}\Big\|_{L^{3,\infty}(\R^{3})}\leq C(k).
				\end{split}
			\end{align*}	
		On the other hand, 
		for any $\varepsilon>0$
			\begin{align*}
				\begin{split}
				&\int_{\R^{3}\setminus B_{\varepsilon}}\nabla_{x}^{k}\Gamma_{1}(x-y)f(y)dy	\\=&-\int_{\R^{3}\setminus B_{\varepsilon}}\nabla_{y}\nabla_{x}^{k-1}\Gamma_{1}(x-y)f(y)dy
					\\=&-\int_{\partial B_{\varepsilon}}\nabla_{x}^{k-1}\Gamma_{1}(x-y)f(y)\nu dS+\int_{\R^{3}\setminus B_{\varepsilon}}\nabla_{x}^{k-1}\Gamma_{1}(x-y)\nabla f(y)dy
				\end{split}\\=&I_{\varepsilon}+J_{\varepsilon}, 
			\end{align*}
			where $\nu$ is the unit outward normal vector of $\partial B_{\varepsilon}$.
		Letting $\varepsilon\rightarrow0^{+}$,	we readily verify
			$$
			|I_{\varepsilon}|\leq \|\nabla^{k}\Gamma_{1}(x)\|_{\infty}\int_{\partial B_{\varepsilon}}|f(y)|dS\leq C(k)\varepsilon\rightarrow0
			$$
		and
		$$
		J_{\varepsilon}\rightarrow \int_{\R^{3}}\nabla_{x}^{k-1}\Gamma_{1}(x-y)\nabla f(y)dy,
		$$
		which enable us to derive
			\begin{align*}
				\begin{split}
					\nabla^{k}F(x)=\int_{\R^{3}}\nabla^{k-1}\Gamma_{1}(x-y)\nabla f(y)dy.
				\end{split}
			\end{align*}
		Thus
			\begin{align*}
				\begin{split}
					\Big||x|^{2}\nabla^{k}F(x)\Big|=&\Big|\int_{\R^{3}}\big(|x-y|^{2}+|y|^{2}+2(x-y)\cdot y\big)\nabla^{k-1}\Gamma_{1}(x-y)\nabla f(y)dy\Big|\\ \leq& C\bigg(\Big|\int_{\R^{3}}|x-y|^{2}\nabla^{k-1}\Gamma_{1}(x-y)\frac{1}{|y|^{2}}dy \Big|+\Big|\int_{\R^{3}}\nabla^{k-1}\Gamma_{1}(x-y)|y|^{2}\frac{1}{|y|^{2}}dy \Big|\\&+\Big|\int_{\R^{3}}|x-y|\nabla^{k-1}\Gamma_{1}(x-y)|y|\frac{1}{|y|^{2}}dy \Big|\bigg)\\ \leq& C\bigg(\sum_{l=2}^{k+1}\Big\||\cdot|^{l}\Gamma_{1}\Big\|_{L^{3,1}(\R^{3})}\Big\|\frac{1}{|\cdot|^{2}}\Big\|_{L^{\frac{3}{2},\infty}(\R^{3})}+\sum_{n=0}^{k-1}\Big\||\cdot|^{n}\Gamma_{1}\Big\|_{L^{1}(\R^{3})}\\&+\sum_{m=1}^{k}\Big\||\cdot|^{m}\Gamma_{1}\Big\|_{L^{\frac{3}{2},1}(\R^{3})}\Big\|\frac{1}{|\cdot|}\Big\|_{L^{3,\infty}(\R^{3})}\bigg)\\ \leq& C,
				\end{split}
			\end{align*}
			where the constant $C$  depends on $\|\kappa\|_{W^{1,\infty}(\mathbb{S}^{2})}$ and $k$, does not depend on $x$. Therefore, we conclude \eqref{inital-2}.
		\end{proof}
		Next, we introduce a crucial lemma that plays a significant role in our proof of {\em a priori} estimates.
		\begin{lem}\label{L0}
			Let $\Omega$ be a connected domain in $\R^{3}$ with Lipschitz boundary, the function $(v,b)\in \mathbf{H}^{1}_{0,\sigma}(\Omega)$ and $p\in D^{1,\frac{3}{2}}(\Omega)\cap L^{3}(\Omega)$ satisfy the following system
			\begin{equation}\label{L1}
				\begin{cases}
					(v\cdot\nabla) v-(b\cdot\nabla) b+\nabla p=0\ \ \ \ &\mbox{in}\ \ \Omega,\\
					(v\cdot\nabla) b-(b\cdot\nabla) v=0\ \ \ \ &\mbox{in}\ \ \Omega,\\
					{\rm div}\,v={\rm div}\,b=0\ \ \ \ \ &\mbox{in}\ \ \Omega,\\
					v=b=0 \ \ \ \ &\mbox{on}\ \ \partial\Omega.
				\end{cases}
			\end{equation}
			Then,
			$$
			\exists\ \  c\in\R \ \mbox{such that}\ p(x)=c\ \mbox{for $\mathcal{H}^{2}$-almost all} \ x\in\partial\Omega,
			$$
			where $\mathcal{H}^{m}$ denoted by the $m$-dimensional Hausdorff measure.
		\end{lem}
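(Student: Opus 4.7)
My plan is to adapt the Korobkov--Pileckas--Russo argument, originally developed for the 3D stationary Euler equations, to the coupled MHD system. The key is to introduce a Bernoulli-type head pressure and analyze its level sets. As a preliminary, I would upgrade the interior regularity of $(v,b,p)$. Viewing the two momentum equations as a coupled elliptic system and using $\nabla p = \mathrm{div}(b\otimes b - v\otimes v) \in L^{3/2}_{\mathrm{loc}}$, a standard bootstrap yields $(v,b,p) \in C^\infty_{\mathrm{loc}}(\Omega)$, so pointwise identities are legitimate in the interior. The hypothesis $p \in D^{1,3/2}(\Omega)\cap L^3(\Omega)$ guarantees a well-defined trace of $p$ on $\partial\Omega$ via Sobolev trace theorems for Lipschitz domains.

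Next I would introduce the head pressure $\Phi := p + \frac{1}{2}|v|^2 - \frac{1}{2}|b|^2$, which lies in $W^{1,3/2}_{\mathrm{loc}}(\Omega)$. Applying the identity $(w\cdot\nabla)w = \frac{1}{2}\nabla|w|^2 - w\times(\nabla\times w)$ to the first equation gives $\nabla\Phi = v\times\omega_v - b\times\omega_b$, with $\omega_v = \nabla\times v$ and $\omega_b = \nabla\times b$. Contracting with $v$ and with $b$ yields $v\cdot\nabla\Phi = -v\cdot(b\times\omega_b)$ and $b\cdot\nabla\Phi = b\cdot(v\times\omega_v)$, which tie the variation of $\Phi$ along integral curves of either field to the coupling between $v$ and $b$. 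The second MHD equation says $(v\cdot\nabla)b - (b\cdot\nabla)v = 0$, i.e.\ the Lie bracket $[v,b]$ vanishes, which supplies exactly the compatibility needed to combine these two identities.

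The crux is to show $\Phi$ takes a single constant value on $\partial\Omega$ in the $\mathcal{H}^2$-a.e.\ sense. I would apply a Morse--Sard type theorem for Sobolev functions (in the spirit of Bourgain--Korobkov--Kristensen) to $\Phi$: for $\mathcal{H}^1$-a.e.\ level $c$, the level set $\{\Phi = c\}$ is, up to null sets, a union of $C^1$ surfaces. Combining this with the fact that $\nabla\Phi = 0$ wherever $v = b = 0$, the no-slip condition $v = b = 0$ on $\partial\Omega$, and the connectedness of $\Omega$, one traces connected pieces of such level sets from the interior to the boundary to deduce that the trace of $\Phi$ on $\partial\Omega$ is a single constant $c$. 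Since $v,b$ have zero trace, $\Phi|_{\partial\Omega} = p|_{\partial\Omega}$, and the lemma follows.

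The main obstacle lies in the level-set analysis under MHD coupling: unlike the pure Navier--Stokes situation where one tracks integral curves of a single velocity field, here one must simultaneously track integral curves of $v$ and of $b$, and it is precisely the vanishing of $[v,b]$ that glues the two families together. A secondary difficulty is that $\Omega$ is only Lipschitz, so trace theory and capacity-type arguments must replace pointwise boundary identities throughout; this is reflected in the $\mathcal{H}^2$-a.e.\ form of the conclusion, and it forces the Morse--Sard step to be done for $W^{1,3/2}$ rather than smooth functions.
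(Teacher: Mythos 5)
The paper's proof is the Amick-type local boundary argument, and it is essentially disjoint from the route you propose. Near any boundary point $x_{0}$ one introduces local graph coordinates $y=(y',y_{3})$ with $y_{3}=h(y')$ parametrizing $\partial\Omega$, and one tests the equation $\nabla p = (b\cdot\nabla)b-(v\cdot\nabla)v$ against functions of the form $\varphi(y')$ depending only on the tangential variables. Integrating $\int_{\Sigma} p\,\partial_{i}\varphi$ by parts over the thin collar $\Sigma_{r,\delta}$, using Lebesgue differentiation to pass to the trace, and then invoking the one-dimensional Hardy inequality (which exploits $v=b=0$ on $\partial\Omega$) to absorb the factor $|y_{3}-h(y')|^{-1}$, one shows that the weak tangential derivative of $p$ on $\partial\Omega$ vanishes. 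This requires no interior $C^{\infty}$ regularity, no Bernoulli function, and no Morse--Sard theory.

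Your proposal has a genuine gap. The Korobkov--Pileckas--Russo machinery hinges on a \emph{Bernoulli law}: in the Euler case, $\nabla\Phi = v\times\omega_{v}$ gives $v\cdot\nabla\Phi = 0$, so $\Phi$ is constant along streamlines, and in 2D or axisymmetric settings one can then apply a Morse--Sard theorem to the stream function whose level sets \emph{are} the streamlines. For the MHD system one has instead $\nabla\Phi = v\times\omega_{v}-b\times\omega_{b}$, hence
\begin{align*}
v\cdot\nabla\Phi &= -\,v\cdot(b\times\omega_{b}) = -\det(v,b,\omega_{b}),\\
b\cdot\nabla\Phi &= \phantom{-}b\cdot(v\times\omega_{v}) = -\det(v,b,\omega_{v}),
\end{align*}
and neither expression vanishes in general. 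So $\Phi$ is not constant along integral curves of $v$, nor along integral curves of $b$, nor along the two-dimensional leaves of the (local) foliation supplied by $[v,b]=0$; the Lie bracket condition makes $v$ and $b$ commute, but it does not force $v\cdot\nabla\Phi$ or $b\cdot\nabla\Phi$ to vanish. Without a Bernoulli law there is no function whose level sets carry $\Phi$, so there is nothing for the Morse--Sard step to act on, and the level-set propagation from interior to boundary has no starting point. Even in the pure 3D Euler case the KPR approach requires a 2D or axisymmetric reduction precisely because of the absence of a scalar stream function; that obstacle is compounded, not resolved, by the MHD coupling. The paper's Amick/Hardy argument sidesteps all of this by working purely locally at the boundary and never appealing to a first integral.
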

		\begin{proof}
			The proof of this lemma follows the ideas presentation in \cite{Amick,ABLS,LMZ}. It is similar to the stationary Euler equation, with a few minor differences.
			
			 For each  $x_{0}\in\partial\Omega$, we can select a new orthogonal coordinate system $y=(y',y_{3})=(y_{1},y_{2},y_{3})$, up to rotation and translation, such that the origin is in $x_{0}$ and the $y_{3}$-axis aligns with the inward normal to $\partial\Omega$ at $x_{0}$. Then we consider the local coordinate in $x_{0}$, that is, for sufficiently small $r,\delta>0$, there exists a $C^{0,1}$-function $h(y')$ such that
			$$
			\Sigma=\Sigma_{r,\delta}:=\{(y',y_{3})\in\R^{3};h(y')<y_{3}<h(y')+\delta,|y'|<r\}\subset\widetilde{\Omega},
			$$
			where $\widetilde{\Omega}$ represents the domain of $\Omega$ in the new coordinate system $y$. We also have $\big(v(y),b(y),p(y)\big)$ solves the equation \eqref{L1} in $\widetilde{\Omega}$.
			For any scalar function $\varphi(y')\in C_{0}^{\infty}(B_{r})$ and $i=1,2$, integrating by parts yields
			\begin{align}\label{L2}
				\begin{split}
					\int_{\Sigma}p(y)\partial_{i}\varphi dy&=\int_{0}^{\delta}\int_{B_{r}}p(y',h(y')+\tilde{y}_{3})\partial_{i}\varphi dy'd\tilde{y}_{3}\\&=-\int_{0}^{\delta}\int_{B_{r}}\varphi(y')\big(\partial_{i}p+\partial_{3}p\partial_{i}h\big)dy'd\tilde{y}_{3}\\&
					=-\int_{\Sigma}\varphi(y')\big(\partial_{i}p+\partial_{3}p\partial_{i}h\big)dy.
				\end{split}
			\end{align}
			Then, by the Lebesgue theorem and \eqref{L2}, one can easily get
			\begin{align}\label{L4}
				\begin{split}
					\Big|\int_{B_{r}}p\big(y',h(y')\big)\partial_{i}\varphi dy'\Big|&=\Big|\lim_{\delta\rightarrow0}\frac{1}{\delta}\int_{\Sigma}p(y)\partial_{i}\varphi dy\Big|\\&
					=\Big|\lim_{\delta\rightarrow0}\frac{1}{\delta}\int_{\Sigma}\varphi(y')\big(\partial_{i}p+\partial_{3}p\partial_{i}h\big)dy\Big|\\&
					\leq C\|h\|_{W^{1,\infty}(B_{r})}\lim_{\delta\rightarrow0}\frac{1}{\delta}\int_{\Sigma}|\nabla p||\varphi(y')|dy\\&
					\leq C\lim_{\delta\rightarrow0}\int_{\Sigma}\frac{|b\cdot\nabla b|+|v\cdot\nabla v|}{|y_{3}-h(y')|}dy,
				\end{split}
			\end{align}
			here we use the fact that $h(y')\in W^{1,\infty}(B_{r})$. We claim that
			$$
			\lim_{\delta\rightarrow0}\int_{\Sigma}\frac{|b\cdot\nabla b|+|v\cdot\nabla v|}{|y_{3}-h(y')|}dy=0.
			$$
			By H\"{o}lder inequality, we have
			$$
			\int_{\Sigma}\frac{|b\cdot\nabla b|+|v\cdot\nabla v|}{|y_{3}-h(y')|}dy\leq\Big\|\frac{b}{y_{3}-h(y')}\Big\|_{L^{2}(\Sigma)}\|\nabla b\|_{L^{2}(\Sigma)}+\Big\|\frac{v}{y_{3}-h(y')}\Big\|_{L^{2}(\Sigma)}\|\nabla v\|_{L^{2}(\Sigma)}.
			$$
			In fact, according to the Hardy inequality, one can derive
			\begin{align*}
				\begin{split}
					\int_{\Sigma}\frac{b^{2}}{|y_{3}-h(y')|^{2}}dy&=\int_{B_{r}}\int_{h(y')}^{h(y')+\delta}\frac{b^{2}}{|y_{3}-h(y')|^{2}}dy_{3}dy'
					\\&=\int_{B_{r}}\int_{h(y')}^{h(y')+\delta}\frac{1}{|y_{3}-h(y')|^{2}}\Big(\int_{h(y')}^{y_{3}}\partial_{3}b(y',\cdot)d\tilde{y}_{3}\Big)^{2}dy_{3}dy'
					\\&\leq C\int_{B_{r}}\int_{h(y')}^{h(y')+\delta}|\partial_{3}b(y',\cdot)|^{2}dy_{3}dy'\\&
					\leq C\|\nabla b\|_{L^{2}(\Sigma)}^{2}.
				\end{split}
			\end{align*}
			Similarly,
			$$
			\int_{\Sigma}\frac{v^{2}}{|y_{3}-h(y')|^{2}}dy\leq C\|\nabla v\|_{L^{2}(\Sigma)}^{2}.
			$$
			Combining the above estimates yields
			$$
			\lim_{\delta\rightarrow0}\int_{\Sigma}\frac{|b\cdot\nabla b|+|v\cdot\nabla v|}{|y_{3}-h(y')|}dy\leq C\lim_{\delta\rightarrow0}(\|\nabla b\|^{2}_{L^{2}(\Sigma)}+\|\nabla v\|^{2}_{L^{2}(\Sigma)})=0.
			$$
			Let us return to \eqref{L4}, one obtains
			$$
			\int_{B_{r}}p\big(y',h(y')\big)\partial_{i}\varphi dy'=-\int_{B_{r}}\partial_{i}p\big(y',h(y')\big)\varphi dy'=0.
			$$
			Thus, due to the arbitrariness of $\varphi$, we conclude our proof.
		\end{proof}
		
		We conclude this section by introducing the well-known {\em Leray-Schauder theorem}.
		\begin{thm}[ Leray-Schauder theorem \cite{GT0} ]\label{T2.1}
			Let $\mathcal{S}$ be a compact mapping of a Banach space $X$ into itself, and suppose there exists a constant $M$ such that
			$$
			\|x\|_{X}<M
			$$
			for all $x\in X$ and $\lambda\in[0,1]$ satisfying $x=\lambda \mathcal{S}x$. Then $\mathcal{S}$ has a fixed point.
		\end{thm}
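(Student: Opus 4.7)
The plan is to reduce Theorem~\ref{T2.1} to Schauder's classical fixed-point theorem by composing $\mathcal{S}$ with a radial retraction that truncates at a large radius. First, I would fix any $R>M$ and introduce
\begin{equation*}
r:X\to\overline{B_R},\qquad r(x)=\begin{cases} x, & \|x\|_X\le R,\\ Rx/\|x\|_X, & \|x\|_X>R,\end{cases}
\end{equation*}
which is continuous on $X$ (in fact Lipschitz) and satisfies $\|r(x)\|_X\le R$. Setting $T:=r\circ\mathcal{S}$, the compactness of $\mathcal{S}$ together with the continuity and boundedness of $r$ makes $T$ a continuous and compact self-map of $\overline{B_R}$, so Schauder's fixed-point theorem produces some $x^*\in\overline{B_R}$ with $x^*=r(\mathcal{S}x^*)$.

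Next I would promote $x^*$ to a fixed point of $\mathcal{S}$ itself. If $\|\mathcal{S}x^*\|_X\le R$, then $r$ is the identity on $\mathcal{S}x^*$ and one immediately obtains $x^*=\mathcal{S}x^*$. Otherwise $\|\mathcal{S}x^*\|_X>R$, and I would write
\begin{equation*}
x^*=r(\mathcal{S}x^*)=\lambda\,\mathcal{S}x^*,\qquad \lambda:=\frac{R}{\|\mathcal{S}x^*\|_X}\in(0,1),
\end{equation*}
so that $x^*$ solves $x=\lambda\mathcal{S}x$ for some $\lambda\in[0,1]$. The a priori hypothesis then forces $\|x^*\|_X<M$, which contradicts the identity $\|x^*\|_X=\|r(\mathcal{S}x^*)\|_X=R>M$ produced by the retraction. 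Hence the second alternative is impossible and $\mathcal{S}$ admits a fixed point.

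The main obstacle I anticipate is really just the bookkeeping around the cutoff radius: one must choose $R$ strictly greater than $M$ so that the contradiction above is sharp, and one must apply the hypothesis uniformly over the full closed interval $\lambda\in[0,1]$ (including the endpoints) so that both cases of the dichotomy are actually covered. Beyond this, no genuine analytic difficulty appears, since all of the work is carried by Schauder's fixed-point theorem on $\overline{B_R}$ together with the uniform a priori bound built into the statement.
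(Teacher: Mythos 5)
Your argument is correct, and it is exactly the classical proof of this theorem: truncate $\mathcal{S}$ by the radial retraction onto a closed ball, apply Schauder's fixed-point theorem to the resulting continuous compact self-map, and rule out the truncated case by observing that it would produce a solution of $x=\lambda\mathcal{S}x$ with $\lambda\in(0,1)$ violating the a priori bound. Note that the paper itself offers no proof of Theorem~\ref{T2.1} --- it is quoted from the reference \cite{GT0}, where the proof is precisely this radial-retraction reduction (there one may even take the cutoff radius equal to $M$, since the hypothesis $\|x\|_{X}<M$ is strict); your choice $R>M$ works just as well and changes nothing essential.
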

		\section{Analysis of the perturbed Leray system} 
		\subsection{Existence of weak solutions to the PLS in $\R^{3}$}
		In this subsection, we will construct a weak solution $(V,G,P)$ to the system \eqref{PLS} in $\R^{3}$. Let's start by constructing a more general weak solution, that is, the couple $(V,G)\in \mathbf{H}_{\sigma}^{1}(\R^{3})$ satisfies \eqref{PLS} in the following distributional sense
		\begin{align}\label{weak1}
			\begin{split}
				\int_{\R^{3}}&\nabla V:\nabla\varphi dx\\&=\int_{\R^{3}}\Big(\frac{1}{2}V+\frac{1}{2}x\cdot \nabla V-(V+U_{0})\cdot\nabla(V+U_{0})+(G+B_{0})\cdot\nabla(G+B_{0})\Big)\varphi dx,
			\end{split}
		\end{align}
		\begin{align}\label{weak2}
			\begin{split}
				\int_{\R^{3}}&\nabla G:\nabla\psi dx\\&=\int_{\R^{3}}\Big(\frac{1}{2}G+\frac{1}{2}x\cdot \nabla G-(V+U_{0})\cdot\nabla(G+B_{0})+(G+B_{0})\cdot\nabla(V+U_{0})\Big)\psi dx,
			\end{split}
		\end{align}
		for all $(\varphi,\psi)\in \mathbf{C}_{0,\sigma}^{\infty}(\R^{3})$. 
		It is important to note that the difference between the weak formulation presented above and Definition \ref{D-0} lies in our choice of a divergence-free test function, which allows us to skip the discussion of pressure. In fact, if we establish the existence of a weak solution $(V,G)\in \mathbf{H}_{\sigma}^{1}(\R^{3})$, the existence of pressure $P$ can be derived from the well-known De Rham theorem \cite{TR}. Consequently, we can obtain a weak solution that satisfies Definition \ref{D-0} through a suitable approximation.
		
		Now, we provide a few simple properties of self-similar solutions to the heat equation.
		\begin{prop}
		Let $(u_{0},b_{0})\in \mathbf{L}^{\infty}_{loc}(\R^{3}\setminus\{0\})$ be homogeneous of degree -1 with $\mathrm{div} u_{0}=\mathrm{div} b_{0}=0$. Then the profiles $(U_{0},B_{0})=(e^{\Delta u_{0}},e^{\Delta b_{0}})$ satisfy the following system
		\begin{align}\label{LS2}
			\begin{split}
				\left.
				\begin{aligned}
					\Delta U_{0}+\frac{1}{2}U_{0}+\frac{1}{2}x\cdot\nabla U_{0}=0\\
					\Delta B_{0}+\frac{1}{2}B_{0}+\frac{1}{2}x\cdot\nabla B_{0}=0\\
					\mathrm{div}\,U_{0} =\mathrm{div}\,B_{0} =0\\
				\end{aligned}\ \right\}\ \ \mbox{in}\ \ \ \R^{3}.
			\end{split}
		\end{align}
		Moreover, 
		\begin{align}\label{initial1}
			\begin{split}
			|\nabla^{k} U_{0}(x)|+|\nabla^{k}B_{0}(x)|\leq C(k)(1+|x|)^{-1},\ \ \forall \ k\in\mathbb{Z}^{+}.
	\end{split}
\end{align}
		\end{prop}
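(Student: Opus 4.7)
The plan is to exploit the self-similar structure forced by the homogeneity of $u_0$ and $b_0$. Since $u_0$ is homogeneous of degree $-1$, a change of variables $y = \sqrt{t}\,z$ in the heat kernel convolution gives
\[
e^{t\Delta}u_0(x) = \int_{\R^{3}} \Gamma_t(x-y)\,u_0(y)\,dy = \frac{1}{\sqrt{t}}\int_{\R^{3}} \Gamma_1\!\left(\tfrac{x}{\sqrt{t}} - z\right)u_0(z)\,dz = \frac{1}{\sqrt{t}}\,U_0\!\left(\tfrac{x}{\sqrt{t}}\right),
\]
using the scaling $\Gamma_t(x)=t^{-3/2}\Gamma_1(x/\sqrt{t})$ together with $u_0(\sqrt{t}z)=t^{-1/2}u_0(z)$. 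The same identity holds for $b_0$.

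Next, I would substitute this self-similar ansatz into the heat equation. Writing $u(x,t)=t^{-1/2}U_0(x/\sqrt{t})$, a direct computation yields
\[
\partial_t u(x,t) = -\tfrac{1}{2}t^{-3/2}U_0\!\left(\tfrac{x}{\sqrt{t}}\right) - \tfrac{1}{2}t^{-2}\,x\cdot \nabla U_0\!\left(\tfrac{x}{\sqrt{t}}\right), \qquad \Delta u(x,t) = t^{-3/2}\Delta U_0\!\left(\tfrac{x}{\sqrt{t}}\right).
\]
Evaluating at $t=1$ and using $\partial_t u = \Delta u$ yields precisely $\Delta U_0 + \tfrac{1}{2}U_0 + \tfrac{1}{2}x\cdot\nabla U_0 = 0$; the analogous computation gives the corresponding equation for $B_0$. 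The divergence-free property is immediate, since the heat semigroup commutes with spatial derivatives: $\operatorname{div}U_0 = \operatorname{div}(e^{\Delta}u_0)=e^{\Delta}\operatorname{div}u_0 = 0$, and similarly for $B_0$.

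For the pointwise bound \eqref{initial1}, no new work is needed, since $U_0=\Gamma_1\ast u_0$ and $B_0=\Gamma_1\ast b_0$ fall directly into the setting of Lemma \ref{inital}(i); applying that lemma component-wise gives $|\nabla^{k}U_0(x)|+|\nabla^{k}B_0(x)|\leq C(k)(1+|x|)^{-1}$ for every $k\in\mathbb{Z}^{+}$.

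There is really no hard step here: the proof is essentially a bookkeeping exercise leveraging the scaling of the heat kernel and the homogeneity of the initial data, plus a direct citation of Lemma \ref{inital}. The only point requiring a little care is verifying the self-similar representation of $e^{t\Delta}u_0$ rigorously, which amounts to checking that the change of variables is legitimate in view of $u_0\in \mathbf{L}^{\infty}_{loc}(\R^{3}\setminus\{0\})$ being homogeneous of degree $-1$; this integrability against $\Gamma_1$ is guaranteed by Lemma \ref{HYI} applied to $\Gamma_1\in L^{3/2,1}(\R^{3})$ and $u_0\in L^{3,\infty}(\R^{3})$, exactly as used in the proof of Lemma \ref{inital}.
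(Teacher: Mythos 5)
Your proposal is correct and matches the paper's approach: the paper also observes that $u_I=e^{t\Delta}u_0$ and $b_I=e^{t\Delta}b_0$ solve the heat equation and invokes ``the scaling properties and Lemma \ref{inital}'' to conclude, which is precisely the self-similar representation $e^{t\Delta}u_0=t^{-1/2}U_0(x/\sqrt{t})$ you derive and then differentiate at $t=1$. You have simply made explicit the bookkeeping that the paper leaves to the reader.
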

		\begin{proof}
			Setting
		\begin{equation}\label{LI}
			\begin{cases}
				u_{I}(x,t)=e^{t\Delta}u_{0}=\Gamma_{t}\ast u_{0}\\
				b_{I}(x,t)=e^{t\Delta}b_{0}=\Gamma_{t}\ast b_{0},
			\end{cases}
		\end{equation}
		which solve the heat equation with the initial condition $u_{0}$ and $b_{0}$, respectively. Then, the scaling properties and Lemma \ref{inital} 
		allow us to directly prove this proposition.
		\end{proof}
		Next, we will construct a weak solution to \eqref{PLS} through the {\em Leray-Schauder theorem}. 
	Let $\Omega\subset\R^{3}$ be a bounded domain with a smooth boundary $\partial\Omega$.  
	Our first goal is to derive {\em a priori} estimates independent of $\lambda\in[0,1]$ of weak solutions to the following system
		\begin{equation}\label{ESp2}
			\begin{cases}
				-\Delta V+\nabla P=\lambda\Big(\frac{1}{2}V+\frac{1}{2}x\cdot\nabla V-U_{0}\cdot\nabla U_{0}+B_{0}\cdot\nabla B_{0}-F_{1}+F_{2}\Big)\ &\mbox{in} \ \ \Omega,\\
				-\Delta G=\lambda\Big(\frac{1}{2}G+\frac{1}{2}x\cdot\nabla G-U_{0}\cdot\nabla B_{0}+B_{0}\cdot\nabla U_{0}-F_{3}+F_{4}\Big)\ &\mbox{in} \ \ \Omega,\\
				{\rm div}\,V={\rm div}\,G=0\ &\mbox{in} \ \ \Omega,\\
				V=G=0 \ &\mbox{on} \ \ \partial\Omega,
			\end{cases}
		\end{equation}
		where, for simplicity, we set
		\begin{align}\label{Sim}
			\begin{split}
				\ F_{1}=(V+U_{0})\cdot\nabla V+V\cdot\nabla U_{0}, \ \ F_{2}=(G+B_{0})\cdot\nabla G+G\cdot\nabla B_{0},\\
				\ F_{3}=(V+U_{0})\cdot\nabla G+V\cdot\nabla B_{0}, \ \ F_{4}=(G+B_{0})\cdot\nabla V+G\cdot\nabla U_{0}.
			\end{split}
		\end{align}
		\begin{lem}[a priori estimate]\label{L5}
			Let $\Omega$ be a bounded domain in $\R^{3}$ with a smooth boundary. Assume that $(U_{0},B_{0})$ satisfies the decay estimates \eqref{initial1}. Let $\lambda\in[0,1]$ and $(V,G)\in \mathbf{H}_{0,\sigma}^{1}(\Omega)$ be a solution to problem \eqref{ESp2}. Then we have
			\begin{equation}\label{LE1}
				\|(V,G)\|_{\mathbf{H}_{0,\sigma}^{1}(\Omega)}
				\leq C,
			\end{equation}
			where the constant $C=C(\Omega,U_{0},B_{0})$, independent on $\lambda$.
		\end{lem}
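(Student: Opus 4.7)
I will use the blow-up argument outlined in the introduction. Argue by contradiction: suppose there exist $\lambda_n \in [0,1]$ and weak solutions $(V_n, G_n, P_n)$ of \eqref{ESp2} with $\lambda = \lambda_n$ satisfying $J_n := \|(V_n, G_n)\|_{\mathbf{H}^1_{0,\sigma}(\Omega)} \to \infty$; extract a subsequence with $\lambda_n \to \lambda_\infty \in [0,1]$. The starting point is the basic energy identity: testing the two equations of \eqref{ESp2} with $(V_n, G_n)$, using the divergence-free conditions on $V_n, G_n, U_0, B_0$, every purely cubic term vanishes (e.g. $\int (V_n\cdot\nabla V_n)\cdot V_n = 0$) and the MHD cross-terms cancel pairwise, namely $\int(G_n\cdot\nabla G_n)\cdot V_n + \int(G_n\cdot\nabla V_n)\cdot G_n = 0$ and $\int(B_0\cdot\nabla G_n)\cdot V_n + \int(B_0\cdot\nabla V_n)\cdot G_n = 0$. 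What remains is
\[
J_n^2 + \tfrac{\lambda_n}{4}(\|V_n\|_2^2 + \|G_n\|_2^2) = \lambda_n(L_1^n + L_2^n),
\]
where $L_1^n$ is linear in $(V_n,G_n)$ (hence $|L_1^n|\le CJ_n$) and
\[
L_2^n = -\int(V_n\cdot\nabla U_0)\cdot V_n - \int(V_n\cdot\nabla B_0)\cdot G_n + \int(G_n\cdot\nabla B_0)\cdot V_n + \int(G_n\cdot\nabla U_0)\cdot G_n
\]
satisfies $|L_2^n| \le C J_n^2$ with $C = C(U_0, B_0, \Omega)$ from the decay \eqref{initial1} and Poincaré. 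If $\lambda_\infty = 0$ this forces $J_n \lesssim \lambda_n J_n \to 0$, contradicting $J_n \to \infty$; so $\lambda_\infty > 0$.

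Normalize $\hat V_n = V_n/J_n$, $\hat G_n = G_n/J_n$ and $\hat P_n = P_n/J_n^2$ (subtracting its mean on $\Omega$). Then $\|(\hat V_n, \hat G_n)\|_{\mathbf{H}^1_{0,\sigma}(\Omega)} = 1$; by Rellich--Kondrachov and the $L^{3/2}$ bound on $\nabla\hat P_n$ obtained by dividing \eqref{ESp2} by $J_n^2$, I extract, up to a subsequence, $\hat V_n \rightharpoonup \hat V$ and $\hat G_n \rightharpoonup \hat G$ weakly in $\mathbf{H}^1_0$ and strongly in $\mathbf{L}^p$ for $p<6$, and $\hat P_n \rightharpoonup \hat P$ in $L^3$. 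Passing to the limit in \eqref{ESp2} divided by $J_n^2$ (the Laplacian $\Delta\hat V_n/J_n \to 0$ in $\mathbf{H}^{-1}$ and the rescaling terms $(\lambda_n/J_n)(\tfrac12\hat V_n + \tfrac12 x\cdot\nabla\hat V_n) \to 0$), the limit $(\hat V, \hat G, \hat P)$ solves the stationary Euler--MHD system
\[
\lambda_\infty(\hat V\cdot\nabla \hat V - \hat G\cdot\nabla \hat G) + \nabla \hat P = 0, \quad \lambda_\infty(\hat V\cdot\nabla \hat G - \hat G\cdot\nabla \hat V) = 0,
\]
with $\mathrm{div}\,\hat V = \mathrm{div}\,\hat G = 0$, $\hat V = \hat G = 0$ on $\partial\Omega$. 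After rescaling the unknowns by $\sqrt{\lambda_\infty}$, this is precisely the system covered by Lemma \ref{L0}, which therefore yields $\hat P = c$ on $\partial\Omega$ (in the $\mathcal{H}^2$-a.e.\ sense) for some constant $c$.

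The decisive step is then to test the limit equations against the smooth divergence-free fields $U_0$ and $B_0$. Although these do not vanish on $\partial\Omega$, the pressure boundary trace collapses thanks to Lemma \ref{L0}:
\[
\int_\Omega \nabla\hat P\cdot U_0\,dx = c\int_{\partial\Omega} U_0\cdot\nu\,dS = c\int_\Omega \mathrm{div}\,U_0\,dx = 0.
\]
Testing the first limit equation with $U_0$ then yields $\int(\hat V\otimes\hat V - \hat G\otimes\hat G):\nabla U_0 = 0$, while the second (pressure-free) equation tested with $B_0$ gives $\int(\hat V\cdot\nabla\hat G)\cdot B_0 = \int(\hat G\cdot\nabla\hat V)\cdot B_0$. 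Grouping the $U_0$-dependent and $B_0$-dependent terms of $L_2^n$ separately and performing the same integrations by parts used in the first paragraph, these two identities are exactly what is needed to conclude $L_2^\infty := \lim_n L_2^n/J_n^2 = 0$. Finally, dividing the energy identity by $J_n^2$ and passing to the limit, $L_1^n/J_n^2 = O(1/J_n) \to 0$ and the strong $L^2$ convergence of $\hat V_n, \hat G_n$ give
\[
1 + \tfrac{\lambda_\infty}{4}(\|\hat V\|_2^2 + \|\hat G\|_2^2) = \lambda_\infty L_2^\infty = 0,
\]
which is absurd. The main technical obstacle I anticipate is the rigorous passage to the limit for the pressure---ensuring that the trace $\hat P|_{\partial\Omega}$ is well-defined and compatible with the constant value supplied by Lemma \ref{L0}---since this underpins the legitimacy of testing the limit equations against $U_0, B_0 \notin \mathbf{H}^1_{0,\sigma}(\Omega)$.
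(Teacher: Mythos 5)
Your proposal is correct and follows essentially the same route as the paper's proof: a blow-up normalization, passage to a limiting stationary Euler--MHD system, application of Lemma \ref{L0} to pin the pressure trace to a constant on $\partial\Omega$, and the final contradiction obtained by testing against $U_0$ and $B_0$. The differences are purely procedural (you argue $\lambda_\infty>0$ at the level of the sequence rather than from the limiting identity, and you normalize $\hat P_n$ by $J_n^2$ rather than by $\lambda_n I_n^2$, so a $\lambda_\infty$ factor survives in the limit equations and must be scaled away before invoking Lemma \ref{L0}); also your inline remark ``$J_n\lesssim\lambda_n J_n\to0$'' should read $J_n^2\lesssim\lambda_n J_n^2$, hence $1\lesssim\lambda_\infty$, but the intended conclusion is the same.
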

		\begin{proof}
			In fact, by Poincar\'{e} inequation, it is sufficient to prove
			$$
			\|(\nabla V,\nabla G)\|_{\mathbf{L}^{2}(\Omega)}
			\leq C.
			$$
			We will argue by contradiction. Assume that the assertion is false. Then, there exists a sequence $\lambda_{k}\subset[0,1]$ and a sequence of solutions $(V_{k},G_{k})\subset \mathbf{H}_{0,\sigma}^{1}(\Omega)$ to problem \eqref{ESp2} with $\lambda_{k}$ instead of $\lambda$, such that
			\begin{equation}\label{ESp3}
				\begin{cases}
					-\Delta V_{k}+\nabla P_{k}=\lambda_{k}\Big(\frac{1}{2}V_{k}+\frac{1}{2}x\cdot\nabla V_{k}-U_{0}\cdot\nabla U_{0}+B_{0}\cdot\nabla B_{0}-F_{1k}+F_{2k}\Big)\ &\mbox{in} \ \ \Omega,\\
					-\Delta G_{k}=\lambda_{k}\Big(\frac{1}{2}G_{k}+\frac{1}{2}x\cdot\nabla G_{k}-U_{0}\cdot\nabla B_{0}+B_{0}\cdot\nabla U_{0}-F_{3k}+F_{4k}\Big)\ &\mbox{in} \ \ \Omega,\\
					{\rm div}\,V_{k}={\rm div}\,G_{k}=0\ &\mbox{in} \ \ \Omega,\\
					V_{k}=G_{k}=0 \ &\mbox{on} \ \ \partial\Omega,
				\end{cases}
			\end{equation}
			where it is similar to \eqref{Sim}
			\begin{align*}\label{Sim2}
				\begin{split}
					F_{1k}=(V_{k}+U_{0})\cdot\nabla V_{k}+V_{k}\cdot\nabla U_{0}, \ \ F_{2k}=(G_{k}+B_{0})\cdot\nabla G_{k}+G_{k}\cdot\nabla B_{0},\\
					F_{3k}=(V_{k}+U_{0})\cdot\nabla G_{k}+V_{k}\cdot\nabla B_{0}, \ \ F_{4k}=(G_{k}+B_{0})\cdot\nabla V_{k}+G_{k}\cdot\nabla U_{0}.
				\end{split}
			\end{align*}
			Moreover,
			\begin{equation*}
				I_{k}^{2}:=\int_{\Omega}(|\nabla V_{k}|^{2}+|\nabla G_{k}|^{2})dx\rightarrow+\infty.
			\end{equation*}
			We normalized $(V_{k},G_{k})$ by defining
			\begin{equation*}
				\hat{V}_{k}=\frac{V_{k}}{I_{k}}, \ \ \hat{G}_{k}=\frac{G_{k}}{I_{k}}.
			\end{equation*}
			Since $(\hat{V}_{k},\hat{G}_{k})$ is bounded in $\mathbf{H}_{0,\sigma}^{1}(\Omega)$. Then we can extract a subsequence, still denoted by $(\hat{V}_{k},\hat{G}_{k})$ such that
			\begin{equation}\label{wc}
				(\hat{V}_{k},\hat{G}_{k})\rightharpoonup(\bar{V},\bar{G}),\ \ \mbox{in}\ \ \mathbf{H}_{0,\sigma}^{1}(\Omega),
			\end{equation}
			and by the compact embedding theorem
			\begin{equation}\label{sc}
				(\hat{V}_{k},\hat{G}_{k})\rightarrow(\bar{V},\bar{G}),\ \ \mbox{in}\ \ \mathbf{L}^{p}(\Omega),\ \ \mbox{for} \ \ p\in[2,6).
			\end{equation}
		We can also assume that $\lambda_{k}\rightarrow\lambda_{0}\in[0,1]$.
			By multiplying the first equation of the system \eqref{ESp3} by $V_{k}$ and the second equation of the system \eqref{ESp3} by $G_{k}$, respectively, we can derive
			\begin{align}\label{LE9}
				\begin{split}
					\frac{\lambda_{k}}{4}\int_{\Omega}&|V_{k}|^{2}dx+\int_{\Omega}|\nabla V_{k}|^{2}dx\\&=\lambda_{k}\Big(\int_{\Omega}(B_{0}\cdot\nabla B_{0}-U_{0}\cdot\nabla U_{0})V_{k}dx+\int_{\Omega}(F_{2k}-F_{1k})V_{k}dx\Big)
				\end{split}
			\end{align}
			and
			\begin{align}\label{LE10}
				\begin{split}
					\frac{\lambda_{k}}{4}\int_{\Omega}&|G_{k}|^{2}dx+\int_{\Omega}|\nabla G_{k}|^{2}dx\\&=\lambda_{k}\Big(\int_{\Omega}(B_{0}\cdot\nabla U_{0}-U_{0}\cdot\nabla B_{0})G_{k}dx+\int_{\Omega}(F_{4k}-F_{3k})G_{k}dx\Big).
				\end{split}
			\end{align}
			By summing  equations \eqref{LE9} and \eqref{LE10}, we proceed to multiply the resulting equation by $\frac{1}{I_{k}^{2}}$ and subsequently analyze the limit as $k\rightarrow\infty$. We investigate the limit of each term using \eqref{wc}-\eqref{sc} and the assumption of $(U_{0},B_{0})$. We can easily get
			\begin{align*}
				\begin{split}
					&\frac{1}{I_{k}^{2}}(\int_{\Omega}|\nabla V_{k}|^{2}dx+\int_{\Omega}|\nabla G_{k}|^{2}dx)=1,\\&
					\frac{1}{I_{k}^{2}}\int_{\Omega}|V_{k}|^{2}dx\rightarrow\int_{\Omega}|\bar{V}|^{2}dx,\ \ \ \frac{1}{I_{k}^{2}}\int_{\Omega}|G_{k}|^{2}dx\rightarrow\int_{\Omega}|\bar{G}|^{2}dx.
				\end{split}
			\end{align*}
			From \eqref{initial1}, one can conclude that
			$$
			U_{0},\nabla U_{0},B_{0},\nabla B_{0}\in L^{4}(\R^{3}).
			$$
			Thus,
			\begin{align*}
				\begin{split}
					\frac{1}{I_{k}^{2}}\int_{\Omega}(B_{0}\cdot\nabla B_{0}-U_{0}\cdot\nabla U_{0})V_{k}dx\rightarrow0,\\ \frac{1}{I_{k}^{2}}\int_{\Omega}(B_{0}\cdot\nabla U_{0}-U_{0}\cdot\nabla B_{0})G_{k}dx\rightarrow0.
				\end{split}
			\end{align*}
				Moreover,  by applying the divergence free condition, we can derive
			$$
			\int_{\Omega}(V_{k}+U_{0})\cdot\nabla V_{k} V_{k}dx=\int_{\Omega}(V_{k}+U_{0})\cdot\nabla G_{k} G_{k}dx=0
			$$
			and
			$$
			\int_{\Omega}(G_{k}+B_{0})\cdot\nabla G_{k} V_{k}dx+\int_{\Omega}(G_{k}+B_{0})\cdot\nabla V_{k} G_{k}dx=0.
			$$
		According to \eqref{initial1} and \eqref{sc}, we have
			$$
		\frac{1}{I_{k}^{2}}\int_{\Omega}F_{1k}V_{k}dx+\frac{1}{I_{k}^{2}}\int_{\Omega}F_{3k}G_{k}dx\rightarrow\int_{\Omega}(\bar{V}\cdot\nabla U_{0})\bar{V}dx+\int_{\Omega}(\bar{V}\cdot\nabla B_{0})\bar{G}dx
		$$
		and
			$$
		\frac{1}{I_{k}^{2}}\int_{\Omega}F_{2k}V_{k}dx+\frac{1}{I_{k}^{2}}\int_{\Omega}F_{4k}G_{k}dx\rightarrow\int_{\Omega}(\bar{G}\cdot\nabla B_{0})\bar{V}dx+\int_{\Omega}(\bar{G}\cdot\nabla U_{0})\bar{G}dx.
		$$
			The calculations presented above yield the following identity
			\begin{align}\label{LE13}
				\begin{split}
					&1+\frac{\lambda_{0}}{4}\int_{\Omega}|\bar{V}|^{2}dx+\frac{\lambda_{0}}{4}\int_{\Omega}|\bar{G}|^{2}dx\\
					=&-\lambda_{0}\Big(\int_{\Omega}\bar{V}\cdot\nabla U_{0}\bar{V}dx-\int_{\Omega}\bar{G}\cdot\nabla B_{0}\bar{V}dx+\int_{\Omega}\bar{V}\cdot\nabla B_{0}\bar{G}dx-\int_{\Omega}\bar{G}\cdot\nabla U_{0}\bar{G}dx\Big)\\
					=&\lambda_{0}\Big(\int_{\Omega}\bar{V}\cdot\nabla\bar{V}U_{0}dx-\int_{\Omega}\bar{G}\cdot\nabla\bar{G}U_{0}dx
					+\int_{\Omega}\bar{V}\cdot\nabla\bar{G}B_{0}dx-\int_{\Omega}\bar{G}\cdot\nabla\bar{V}B_{0}dx\Big),
				\end{split}
			\end{align}
			which implies that $\lambda_{0}\neq0$. Therefore, for sufficiently large $k$, we have $\lambda_{k}\neq0$, which allows us to normalize the pressure by setting
			$$
			\hat{P}_{k}=\frac{P_{k}}{\lambda_{k}I_{k}^{2}}.
			$$
			Next, we return to the first and second equations in \eqref{ESp3}. Dividing by $\lambda_{k}I_{k}^{2}$, we obtain
			\begin{align}\label{LE14}
				\begin{split}
					\hat{V}_{k}\cdot\nabla\hat{V}_{k}&-\hat{G}_{k}\cdot\nabla \hat{G}_{k}+\nabla\hat{P}_{k}=\frac{1}{I_{k}}\Big(\frac{\Delta \hat{V}_{k}}{\lambda_{k}}+\frac{1}{2}\hat{V}_{k}+\frac{1}{2}x\cdot\nabla\hat{V}_{k}\\&+\frac{1}{I_{k}}B_{0}\cdot\nabla B_{0}-\frac{1}{I_{k}}U_{0}\cdot\nabla U_{0}-U_{0}\nabla\hat{V}_{k}-\hat{V}_{k}\nabla U_{0}+B_{0}\nabla\hat{G}_{k}+\hat{G}_{k}\nabla B_{0}\Big)\\
				\end{split}
			\end{align}
			and
			\begin{align}\label{LE15}
				\begin{split}
					\hat{V}_{k}\cdot\nabla\hat{G}_{k}&-\hat{G}_{k}\cdot\nabla\hat{V}_{k}=\frac{1}{I_{k}}\Big(\frac{\Delta \hat{G}_{k}}{\lambda_{k}}+\frac{1}{2}\hat{G}_{k}+\frac{1}{2}x\cdot\nabla\hat{G}_{k}\\&+\frac{1}{I_{k}}B_{0}\cdot\nabla U_{0}-\frac{1}{I_{k}}U_{0}\cdot\nabla B_{0}-U_{0}\nabla\hat{G}_{k}-\hat{V}_{k}\nabla B_{0}+B_{0}\nabla\hat{V}_{k}+\hat{G}_{k}\nabla U_{0}\Big).
				\end{split}
			\end{align}
			Now, we will consider the weak formulation of equations \eqref{LE14} and \eqref{LE15}. Specifically,  testing equation \eqref{LE14} with an arbitrary function $\varphi\in C_{0,\sigma}^{\infty}(\Omega)$. After integrating by parts and taking the limit as $k\rightarrow\infty$, we obtain
			$$
			\frac{1}{I_{k}}\int_{\Omega}\Big(\frac{\Delta \hat{V}_{k}}{\lambda_{k}}+\frac{1}{2}\hat{V}_{k}+\frac{1}{2}x\cdot\nabla\hat{V}_{k}\Big)\varphi dx\rightarrow0.
			$$
			Thanks to \eqref{initial1}, we have
			$$
			\frac{1}{I^{2}_{k}}\int_{\Omega}\Big(B_{0}\cdot\nabla B_{0}-U_{0}\cdot\nabla U_{0}\Big)\varphi dx\rightarrow0,
			$$
			and
			$$
			\frac{1}{I_{k}}\int_{\Omega}\Big(-U_{0}\nabla\hat{V}_{k}-\hat{V}_{k}\nabla U_{0}+B_{0}\nabla\hat{G}_{k}+\hat{G}_{k}\nabla B_{0}\Big)\varphi dx\rightarrow0.
			$$
			Combined with the fact $\int_{\Omega}\nabla\hat{P}_{k}\cdot\varphi dx=0$, we get
			\begin{equation}\label{LE16}
				\int_{\Omega}(\bar{V}\cdot\nabla\bar{V}-\bar{G}\cdot\nabla\bar{G})\varphi dx=0,\ \ \ \mbox{for all}\ \ \varphi\in C_{0,\sigma}^{\infty}(\Omega).
			\end{equation}
			Similarly, testing with an arbitrary $\psi\in C_{0,\sigma}^{\infty}(\Omega)$ to \eqref{LE15}, we have
			\begin{equation}\label{LE17}
				\int_{\Omega}(\bar{V}\cdot\nabla\bar{G}-\bar{G}\cdot\nabla\bar{V})\psi dx=0,\ \ \ \mbox{for all}\ \ \psi\in C_{0,\sigma}^{\infty}(\Omega).
			\end{equation}
			The equations \eqref{LE16} and \eqref{LE17} imply that $(\bar{V},\bar{G})\in \mathbf{H}_{0,\sigma}^{1}(\Omega)$ is a weak solution of the system \eqref{L1}. Based on the well-known De Rham Theorem \cite{TR}, there exists a pressure $\bar{P}\in D^{1,\frac{3}{2}}(\Omega)\cap L^{3}(\Omega)$ such that $(\bar{V},\bar{G},\bar{P})$ solves the system \eqref{L1}. Furthermore, by Lemma \ref{L0}, there exists a constant $c\in\R$ such that $\bar{P}(x)=c$ on $\partial\Omega$. Notice that $U_{0},B_{0}\in L^{4}(\Omega)$, approximating $U_{0},B_{0}$ in the $L^{4}-$norm by test functions implies that
			\begin{equation}\label{LE19}
				\int_{\Omega}\bar{V}\cdot\nabla\bar{V}U_{0}dx-\int_{\Omega}\bar{G}\cdot\nabla\bar{G}U_{0}dx=-\int_{\Omega}\nabla\bar{P}\cdot U_{0}dx
			\end{equation}
			and
			\begin{equation}\label{LE20}
				\int_{\Omega}\bar{V}\cdot\nabla\bar{G}B_{0}dx-\int_{\Omega}\bar{G}\cdot\nabla\bar{V}B_{0}dx=0.
			\end{equation}
			Now, let us go back to \eqref{LE13}. By using \eqref{LE19} and \eqref{LE20}, we have
			\begin{align*}\label{LE21}
				\begin{split}
					1+\frac{\lambda_{0}}{4}\int_{\Omega}|\bar{V}|^{2}dx+\frac{\lambda_{0}}{4}\int_{\Omega}|\bar{G}|^{2}dx&=-\lambda_{0}\int_{\Omega}\nabla\bar{P}\cdot U_{0}dx\\&
					=-\lambda_{0}\int_{\Omega}\nabla\cdot(\bar{P}U_{0})dx\\&=-c\lambda_{0}\int_{\partial\Omega}U_{0}\cdot\vec{\nu} d\mathcal{H}^{2}\\&
					=-c\lambda_{0}\int_{\partial\Omega}\nabla\cdot U_{0} dx=0,
				\end{split}
			\end{align*}
			where $\vec{\nu}$ is the unit outward normal vector of $\partial\Omega$.
			The obtained contradiction completed the proof of Lemma \ref{L2}.
		\end{proof}
		With this {\em a prior} estimate in hand, we can initiate the proof of the existence of solutions to \eqref{PLS} in bounded domain by {\em Leray-Schauder theorem}. To begin, we introduce the linear map $L$ as follows,
		\begin{equation*}
			\begin{aligned}
				L(V,G):=\Big(\frac{1}{2}V+&\frac{1}{2}x\cdot\nabla V-U_{0}\cdot\nabla V-V\cdot\nabla U_{0}+B_{0}\cdot\nabla G+G\cdot\nabla B_{0},\\& \frac{1}{2}G+\frac{1}{2}x\cdot\nabla G-U_{0}\cdot\nabla G-V\cdot\nabla B_{0}+B_{0}\cdot\nabla V+G\cdot\nabla U_{0}\Big)
			\end{aligned}
		\end{equation*}
		and the nonlinear map $N$,
		\begin{equation*}
			\begin{aligned}
				N(V,G):=\Big(-V\cdot\nabla V&-U_{0}\cdot\nabla U_{0}+G\cdot\nabla G+B_{0}\cdot\nabla B_{0},\\&
				-V\cdot\nabla G-U_{0}\cdot\nabla B_{0}+G\cdot\nabla V+B_{0}\cdot\nabla U_{0}\Big).
			\end{aligned}
		\end{equation*}
		In this way, the system \eqref{ESp2} can be rewritten as
		\begin{equation}\label{re1}
			(-\Delta V+\nabla P,-\Delta G)=\lambda A(V,G),
		\end{equation}
		where
		\begin{equation}\label{nlm}
			A(V,G)=L(V,G)+N(V,G).
		\end{equation}
		Notice that the system \eqref{PLS} can be reduced to the case $\lambda=1$ in the \eqref{re1}.
		Since $\Omega$ is bounded, we can utilize the Poincar\'{e} inequality to define the scalar product in the Hilbert space $\mathbf{H}_{0,\sigma}^{1}(\Omega)$ as follows:
		$$
		\big\langle(V,G),(V^{\prime},G^{\prime})\big\rangle_{\mathbf{H}_{0,\sigma}^{1}(\Omega)}=\int_{\Omega}\nabla V:\nabla V^{\prime}dx+\int_{\Omega}\nabla G:\nabla G^{\prime}dx.
		$$
		Therefore, the weak formulation of equation \eqref{re1} can be further rewritten as
		$$
		\big\langle(V,G),\Upsilon\big\rangle_{\mathbf{H}_{0,\sigma}^{1}(\Omega)}=\int_{\Omega}A(V,G)\Upsilon dx, \ \ \ \forall\ \ \Upsilon\in\mathbf{C}^{\infty}_{c,\sigma}(\Omega).
		$$
		Denote $\mathbf{H}^{-1}(\Omega)$ as the dual space of $\mathbf{H}_{0,\sigma}^{1}(\Omega)$.  According to the Riesz representation theorem, for any $f\in\mathbf{H}^{-1}(\Omega)$ there exists an isomorphism mapping $\mathbb{T}:\mathbf{H}^{-1}(\Omega)\rightarrow\mathbf{H}_{0,\sigma}^{1}(\Omega)$ such that
		$$
		\langle\mathbb{T}(f),\zeta\rangle=\int_{\Omega}f\cdot\zeta dx, \ \ \ \forall\ \ \zeta\in\mathbf{H}_{0,\sigma}^{1}(\Omega),
		$$
		and
		$$
		\|\mathbb{T}(f)\|_{\mathbf{H}_{0,\sigma}^{1}(\Omega)}\leq \|f\|_{\mathbf{H}^{-1}(\Omega)}.
		$$
		Thus, the system \eqref{ESp2} can ultimately be rewritten as
		\begin{equation*}\label{re2}
			(V,G)=\lambda(\mathbb{T}\circ A)(V,G)\triangleq \lambda \mathcal{S}(V,G).
		\end{equation*}
	Now, we will prove the existence of a solution for the system \eqref{ESp2} when $\lambda=1$.
		\begin{lem}[continuity and compactness]\label{L3}
			Let $\Omega$ be a bounded domain with a smooth boundary. Then,
			$$
			A:\mathbf{H}_{0,\sigma}^{1}(\Omega)\rightarrow\mathbf{L}^{\frac{3}{2}}(\Omega)
			$$
			is continuous, and
			$$
			A:\mathbf{H}_{0,\sigma}^{1}(\Omega)\rightarrow\mathbf{H}^{-1}(\Omega)
			$$
			is compact, where $A$ is defined in \eqref{nlm}.
		\end{lem}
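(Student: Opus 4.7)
The plan is to handle both assertions together. First I will establish the quadratic bound
$\|A(V,G)\|_{3/2} \leq C(1 + \|(V,G)\|_{\mathbf{H}^{1}_{0,\sigma}}^{2})$
and then upgrade this to compactness into $\mathbf{H}^{-1}(\Om)$ by a direct Rellich--Kondrachov argument. The main ingredients are the Sobolev embedding $H^{1}_{0}(\Om) \hookrightarrow L^{6}(\Om)$ (compact into $L^{p}(\Om)$ for $p<6$), the $L^{\infty}(\R^{3})$ bounds on $U_{0}, B_{0}, \nabla U_{0}, \nabla B_{0}$ furnished by \eqref{initial1}, and the divergence-free conditions on $V$, $G$, $U_{0}$, $B_{0}$.

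For the $\mathbf{L}^{3/2}$ bound I split $A = L + N$. In $L(V,G)$, every summand is either zeroth order in $(V,G)$, or first order with a bounded coefficient ($U_{0}, B_{0} \in L^{\infty}(\R^{3})$ by \eqref{initial1}, and $x \in L^{\infty}(\Om)$ since $\Om$ is bounded), or a product of $(V,G)$ with $\nabla U_{0}, \nabla B_{0} \in L^{\infty}(\R^{3})$. Each such term lies in $\mathbf{L}^{2}(\Om) \hookrightarrow \mathbf{L}^{3/2}(\Om)$ with norm controlled by $\|(V,G)\|_{\mathbf{H}^{1}_{0,\sigma}}$, and $L$ is a bounded linear map from $\mathbf{H}^{1}_{0,\sigma}(\Om)$ to $\mathbf{L}^{3/2}(\Om)$. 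In $N(V,G)$, the constant summands $U_{0}\cdot\nabla U_{0}$, $B_{0}\cdot\nabla B_{0}$, $U_{0}\cdot\nabla B_{0}$, $B_{0}\cdot\nabla U_{0}$ are in $L^{\infty}$ by \eqref{initial1}, while the critical summands $V\cdot\nabla V$, $G\cdot\nabla G$, $V\cdot\nabla G$, $G\cdot\nabla V$ are controlled by H\"older with $\frac{1}{6}+\frac{1}{2}=\frac{2}{3}$ together with the Sobolev embedding:
$$\|V\cdot\nabla V\|_{3/2} \leq \|V\|_{6}\|\nabla V\|_{2} \leq C\|(V,G)\|_{\mathbf{H}^{1}_{0,\sigma}}^{2}.$$
Continuity of $N$ on bounded sets then follows from the familiar bilinear splitting $V_{n}\cdot\nabla V_{n} - V\cdot\nabla V = (V_{n}-V)\cdot\nabla V_{n} + V\cdot\nabla(V_{n}-V)$ and the same H\"older/Sobolev estimate; the mixed quadratic terms are identical.

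For compactness $\mathbf{H}^{1}_{0,\sigma}(\Om) \to \mathbf{H}^{-1}(\Om)$, take any bounded sequence $\{(V_{n},G_{n})\}$. Rellich--Kondrachov delivers a subsequence with $(V_{n},G_{n}) \rightharpoonup (V,G)$ in $\mathbf{H}^{1}_{0,\sigma}(\Om)$ and $(V_{n},G_{n}) \to (V,G)$ strongly in $\mathbf{L}^{p}(\Om)$ for every $p \in [2,6)$. To show $A(V_{n},G_{n}) \to A(V,G)$ in $\mathbf{H}^{-1}(\Om)$ I pair against arbitrary $\phi \in \mathbf{H}^{1}_{0,\sigma}(\Om)$ with $\|\phi\|_{\mathbf{H}^{1}_{0,\sigma}} \leq 1$ and estimate each summand uniformly in $\phi$. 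Zeroth-order pieces are bounded by $\|(V_{n},G_{n})-(V,G)\|_{L^{2}}\|\phi\|_{L^{2}} \to 0$. First-order pieces such as $x\cdot\nabla(V_{n}-V)\cdot\phi$ and $U_{0}\cdot\nabla(V_{n}-V)\cdot\phi$ are integrated by parts (using $V_{n}-V, \phi \in H^{1}_{0}$ and $\mathrm{div}\,U_{0}=0$) to transfer the gradient onto $x\phi$ or $U_{0}\phi$, leaving only $\|V_{n}-V\|_{L^{2}}$ times an $H^{1}$ norm of $\phi$. For the quadratic summands, $\mathrm{div}\,V_{n}=0$ allows one to write $V_{n}\cdot\nabla V_{n} = \mathrm{div}(V_{n}\otimes V_{n})$, so the dual pairing becomes $-\int_{\Om}V_{n}\otimes V_{n}:\nabla\phi\,dx$; since $V_{n} \to V$ in $L^{4}(\Om)$ we have $V_{n}\otimes V_{n} \to V\otimes V$ in $L^{2}(\Om)$, and
$$\Big|\int_{\Om}(V_{n}\otimes V_{n} - V\otimes V):\nabla\phi\,dx\Big| \leq \|V_{n}\otimes V_{n} - V\otimes V\|_{L^{2}}\|\nabla\phi\|_{L^{2}} \to 0$$
uniformly in $\phi$ of unit $\mathbf{H}^{1}_{0,\sigma}$-norm. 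The mixed quadratic terms $V_{n}\cdot\nabla G_{n}$, $G_{n}\cdot\nabla V_{n}$ are treated identically.

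No serious obstacle is expected. The only points requiring care are the critical H\"older pairing $L^{6}\cdot L^{2} \subset L^{3/2}$, which barely closes in three dimensions, and the systematic use of the divergence-free conditions to convert both first-order and quadratic terms into divergence form, so that only strong $L^{p}$ convergence, and not strong convergence of gradients, is needed from Rellich. As an alternative to the direct argument, compactness also follows at once from the continuity $A:\mathbf{H}^{1}_{0,\sigma}(\Om)\to\mathbf{L}^{3/2}(\Om)$ combined with the compact embedding $\mathbf{L}^{3/2}(\Om)\hookrightarrow\mathbf{H}^{-1}(\Om)$, which in turn follows via Schauder duality from the compact Rellich embedding $\mathbf{H}^{1}_{0,\sigma}(\Om)\hookrightarrow\mathbf{L}^{3}(\Om)$.
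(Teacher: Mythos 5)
Your main argument is essentially the same as the paper's proof. Both establish the $\mathbf{L}^{3/2}(\Omega)$ bound by the $L^6\cdot L^2$ H\"older pairing together with the Sobolev embedding $H^1_0(\Omega)\hookrightarrow L^6(\Omega)$, and both prove compactness by extracting a Rellich--Kondrachov subsequence, integrating by parts (exploiting the divergence-free structure of $V$, $G$, $U_0$, $B_0$) to place first-order and quadratic terms into divergence form, and passing to the limit in the dual pairing against test functions of unit $\mathbf{H}^1_{0,\sigma}$-norm. The only cosmetic difference is the Rellich exponent: the paper controls all terms through strong convergence in $\mathbf{L}^3(\Omega)$, whereas you split between $\mathbf{L}^2$ for the linear pieces and $\mathbf{L}^4$ for the quadratic ones; both choices are subcritical and equally valid.

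Your closing remark is a genuine streamlining not present in the paper and deserves emphasis. Once the quadratic bound $\|A(V,G)\|_{3/2}\leq C(1+\|(V,G)\|_{\mathbf{H}^1_{0,\sigma}}^2)$ shows that $A$ maps bounded subsets of $\mathbf{H}^1_{0,\sigma}(\Omega)$ to bounded subsets of $\mathbf{L}^{3/2}(\Omega)$, compactness of the composition $\mathbf{H}^1_{0,\sigma}(\Omega)\xrightarrow{A}\mathbf{L}^{3/2}(\Omega)\hookrightarrow\mathbf{H}^{-1}(\Omega)$ follows at once from compactness of the embedding $\mathbf{L}^{3/2}(\Omega)\hookrightarrow\mathbf{H}^{-1}(\Omega)$, which is the Schauder adjoint of the compact Rellich embedding $\mathbf{H}^1_{0,\sigma}(\Omega)\hookrightarrow\mathbf{L}^3(\Omega)$. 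This bypasses the entire term-by-term limit analysis. One small point of precision: it is the quadratic bound (bounded sets map to bounded sets) rather than ``continuity'' in the topological sense that drives this factorization for a nonlinear map; since you establish both, the alternative argument is complete.
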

		\begin{proof}
			First, we prove $A:\mathbf{H}_{0,\sigma}^{1}(\Omega)\rightarrow\mathbf{L}^{\frac{3}{2}}(\Omega)$ is continuous. The linear component is straightforward; thus, we will focus on the nonlinear map $N$. Here we will examine only a few terms, as the others are easier. By applying the Sobolev embedding theorem, we get $(V,G)\in\mathbf{H}_{0,\sigma}^{1}(\Omega)\subset \mathbf{L}^{6}(\Omega)$.
			Hence, we conclude that  $V\cdot\nabla V\in L^{\frac{3}{2}}(\Omega)$. From the decay estimate \eqref{initial1}, it follows that $U_{0}\cdot\nabla U_{0}\in L^{\frac{3}{2}}(\Omega)$, when $\Omega$ is bounded.
			Since the Sobolev embedding $\mathbf{H}_{0,\sigma}^{1}(\Omega)\subset \mathbf{L}^{6}(\Omega)$ is continuous, we get $A:\mathbf{H}_{0,\sigma}^{1}(\Omega)\rightarrow\mathbf{L}^{\frac{3}{2}}(\Omega)$ is also continuous. Moreover, every function $h\in L^{\frac{3}{2}}(\Omega)$ can be identified as an element of $\mathbf{H}^{-1}(\Omega)$. Thus, $A:\mathbf{H}_{0,\sigma}^{1}(\Omega)\rightarrow\mathbf{H}^{-1}(\Omega)$ is continuous.
			
			Next, we will prove that the operator $A:\mathbf{H}_{0,\sigma}^{1}(\Omega)\rightarrow\mathbf{H}^{-1}(\Omega)$ is compact. Suppose that $\|(V_{k},G_{k})\|_{\mathbf{H}_{0,\sigma}^{1}(\Omega)}\leq C$. After extracting a subsequence, there exists $(\hat{V},\hat{G})\in \mathbf{H}_{0,\sigma}^{1}(\Omega)$ such that
			$$
			(v_{k},g_{k})=(\hat{V}-V_{k},\hat{G}-G_{k})\rightharpoonup0\ \ \mbox{in}\ \ \mathbf{H}_{0,\sigma}^{1}(\Omega)
			$$
			and
			$$
			(v_{k},g_{k})=(\hat{V}-V_{k},\hat{G}-G_{k})\rightarrow0\ \ \mbox{in}\ \ \mathbf{L}^{3}(\Omega).
			$$
			First, we consider the linear term $L$, using the decay estimate \eqref{initial1} of $U_{0},B_{0}$, for any $\Psi=(\varphi,\psi)\in\mathbf{H}_{0,\sigma}^{1}(\Omega)$, we have
			\begin{align*}
				\Big|\int_{\Omega}L(v_{k},g_{k})\cdot(\varphi,\psi)dx\Big|&\leq C\big(\|v_{k}\|_{L^{3}(\Omega)}+\|g_{k}\|_{L^{3}(\Omega)}\big)\big(\|\varphi\|_{H_{0,\sigma}^{1}(\Omega)}+\|\psi\|_{H_{0,\sigma}^{1}(\Omega)}\big)\\&
				\leq C\big(\|v_{k}\|_{L^{3}(\Omega)}+\|g_{k}\|_{L^{3}(\Omega)}\big)\|\Psi\|_{\mathbf{H}_{0,\sigma}^{1}(\Omega)},
			\end{align*}
			where the constant $C$ independent on $k$ and $\Psi$. Thus,
			\begin{align}\label{Linear part}
				\begin{split}
					\|L(\hat{V},\hat{G})-L(V_{k},G_{k})\|_{\mathbf{H}^{-1}(\Omega)}&:=\sup_{\|\Psi\|_{\mathbf{H}_{0,\sigma}^{1}(\Omega)}=1}
					\Big|\int_{\Omega}L(v_{k},g_{k})\cdot \Psi dx\Big|\\&\leq C\big(\|v_{k}\|_{L^{3}(\Omega)}+\|g_{k}\|_{L^{3}(\Omega)}\big)\rightarrow0.
				\end{split}
			\end{align}
			For the nonlinear terms, notice that
			\begin{align*}
				\begin{split}
					N(\hat{V},\hat{G})-N(V_{k},G_{k})=&\Big(-(v_{k}+V_{k})\cdot\nabla v_{k}-v_{k}\cdot\nabla V_{k}+(g_{k}+G_{k})\cdot\nabla g_{k}+g_{k}\cdot\nabla G_{k},\\&-(v_{k}+V_{k})\cdot\nabla g_{k}-v_{k}\cdot\nabla G_{k}+(g_{k}+G_{k})\cdot\nabla v_{k}+g_{k}\cdot\nabla V_{k}\Big).
				\end{split}
			\end{align*}
			Therefore, for any $\Psi\in\mathbf{H}_{0,\sigma}^{1}(\Omega)$, after performing some integration by parts, one can obtain
			\begin{align*}
				\begin{split}
					\Big|\int_{\Omega}\Big(N(\hat{V},\hat{G})-N(V_{k},G_{k})\Big)\cdot \Psi dx\Big|\leq C\big(\|v_{k}\|_{L^{3}(\Omega)}+\|g_{k}\|_{L^{3}(\Omega)}\big)\|\Psi\|_{\mathbf{H}_{0,\sigma}^{1}(\Omega)},
				\end{split}
			\end{align*}
			with a constant $C$ independent on $k$ and $\Psi$. It follows that,
			\begin{align}\label{nonlinear}
				\begin{split}
					\|N(\hat{V},\hat{G})-N(V_{k},G_{k})\|_{\mathbf{H}^{-1}(\Omega)}
					&:=\sup_{\|\Psi\|_{\mathbf{H}_{0,\sigma}^{1}(\Omega)}=1}|\int_{\Omega}\big(N(\hat{V},\hat{G}) -N(V_{k},G_{k})\big)\cdot \Psi dx|\\&\leq C\big(\|v_{k}\|_{L^{3}(\Omega)}+\|g_{k}\|_{L^{3}(\Omega)}\big)\rightarrow0.
				\end{split}
			\end{align}
			Collecting  \eqref{Linear part} and \eqref{nonlinear}, we have
			\begin{align*}
				\|A(V_{k},G_{k})-A(\hat{V},\hat{G})\|_{\mathbf{H}^{-1}(\Omega)}\rightarrow0,
			\end{align*}
			which means that $A:\mathbf{H}_{0,\sigma}^{1}(\Omega)\rightarrow\mathbf{H}^{-1}(\Omega)$ is compact.
		\end{proof}
		\begin{prop}[existence in bounded domains]\label{p2}
			Let $\Omega$ be a bounded domain with a smooth boundary. Assume that $(U_{0},B_{0})$ satisfies the decay estimate \eqref{initial1}. Then the following problem
			\begin{equation}\label{ESp4}
				\begin{cases}
					-\Delta V+\nabla P=\frac{1}{2}V+\frac{1}{2}x\cdot\nabla V-U_{0}\cdot\nabla U_{0}+B_{0}\cdot\nabla B_{0}-F_{1}+F_{2}\ &\mbox{in} \ \ \Omega,\\
					-\Delta G=\frac{1}{2}G+\frac{1}{2}x\cdot\nabla G-U_{0}\cdot\nabla B_{0}+B_{0}\cdot\nabla U_{0}-F_{3}+F_{4}\ &\mbox{in} \ \ \Omega,\\
					{\rm div}\,V={\rm div}\,G=0\ &\mbox{in} \ \ \Omega,\\
					V=G=0 \ &\mbox{on} \ \ \partial\Omega,
				\end{cases}
			\end{equation}
			has a solution $(V,G)\in \mathbf{H}_{0,\sigma}^{1}(\Omega)$.
		\end{prop}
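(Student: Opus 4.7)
The plan is to deduce Proposition \ref{p2} as a direct application of the Leray--Schauder theorem (Theorem \ref{T2.1}) to the fixed-point reformulation already set up just above the statement, namely
\[
(V,G)=\lambda\,\mathcal{S}(V,G),\qquad \mathcal{S}:=\mathbb{T}\circ A,
\]
viewed as an equation on the Banach space $X=\mathbf{H}^{1}_{0,\sigma}(\Omega)$. Observe that the target problem \eqref{ESp4} corresponds exactly to the case $\lambda=1$, so producing a fixed point of $\mathcal{S}$ solves the proposition.

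First I would verify that $\mathcal{S}:X\to X$ is compact. By Lemma \ref{L3}, $A:\mathbf{H}^{1}_{0,\sigma}(\Omega)\to \mathbf{H}^{-1}(\Omega)$ is continuous and compact; the Riesz isomorphism $\mathbb{T}:\mathbf{H}^{-1}(\Omega)\to \mathbf{H}^{1}_{0,\sigma}(\Omega)$ is a linear homeomorphism, hence continuous. The composition of a continuous linear map with a compact continuous map is compact, so $\mathcal{S}=\mathbb{T}\circ A$ is a compact (nonlinear) operator from $X$ into itself, which is precisely the hypothesis on the operator in Theorem \ref{T2.1}.

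Second I would produce the $\lambda$-uniform bound required by the Leray--Schauder theorem. Suppose $(V,G)\in X$ and $\lambda\in[0,1]$ satisfy $(V,G)=\lambda\mathcal{S}(V,G)$. Unwinding the definitions of $\mathcal{S}$, $A$, and the Riesz isomorphism, this is exactly the weak formulation of \eqref{ESp2} with that value of $\lambda$ (the pressure is recovered afterwards by De Rham's theorem, which is not needed at this step since the test functions are divergence-free). Therefore Lemma \ref{L5} applies and yields
\[
\|(V,G)\|_{X}=\|(V,G)\|_{\mathbf{H}^{1}_{0,\sigma}(\Omega)}\le C(\Omega,U_{0},B_{0}),
\]
with $C$ independent of $\lambda$. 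Setting $M:=C(\Omega,U_{0},B_{0})+1$, every solution pair $(\lambda,(V,G))$ of $(V,G)=\lambda\mathcal{S}(V,G)$ satisfies $\|(V,G)\|_{X}<M$, which is the required uniform a priori bound.

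The two hypotheses of the Leray--Schauder theorem are thus verified, so $\mathcal{S}$ admits a fixed point $(V,G)\in \mathbf{H}^{1}_{0,\sigma}(\Omega)$, which is a weak solution of \eqref{ESp4} in the divergence-free sense. To finish, I would recover the pressure: since $(V,G)$ satisfies the equations tested against all $\mathbf{C}^{\infty}_{c,\sigma}(\Omega)$ fields, De Rham's theorem \cite{TR} produces a distribution $P$ (in $D^{1,3/2}(\Omega)\cap L^{3}(\Omega)$ by standard elliptic regularity for the stationary Stokes problem in a bounded smooth domain together with the integrability of the right-hand side given by Lemma \ref{L3}) such that the first equation of \eqref{ESp4} holds. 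I do not foresee a serious obstacle here; the only nontrivial input is the a priori bound of Lemma \ref{L5}, which has already been established by the blow-up argument, and the verification that $\mathbb{T}\circ A$ is compact, which follows immediately from Lemma \ref{L3}. The main care point is simply bookkeeping: confirming that the fixed-point equation in $X$ is genuinely equivalent to the weak formulation of \eqref{ESp2} so that Lemma \ref{L5} can be invoked on every $\lambda$-solution.
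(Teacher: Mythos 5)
Your proof is correct and follows essentially the same route as the paper: compactness of $\mathcal{S}=\mathbb{T}\circ A$ from Lemma \ref{L3}, the $\lambda$-uniform a priori bound from Lemma \ref{L5}, and the Leray--Schauder theorem. The remark about recovering the pressure via De Rham is a fine addition but is not needed for the stated conclusion, which only asserts existence of $(V,G)\in\mathbf{H}^{1}_{0,\sigma}(\Omega)$.
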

		\begin{proof}
			By Lemma \ref{L5}, for each $\lambda\in[0,1]$, if $(V,G)$ is a weak solution of problem \eqref{ESp2}, i.e.,
			$$
			(V,G)=\lambda(\mathbb{T}\circ A)(V,G)\triangleq\lambda  \mathcal{S}(V,G).
			$$
			Then, we have $\|(V,G)\|_{\mathbf{H}_{0,\sigma}^{1}(\Omega)}\leq C$, where $C$ independent on $\lambda$.
			Next, we will consider the weak formulation of problem \eqref{ESp4}, namely,
			\begin{equation*}\label{LE24}
				(V,G)=(\mathbb{T}\circ A)(V,G)\triangleq  \mathcal{S}(V,G).
			\end{equation*}
			By Lemma \ref{L3}, the nonlinear map $A$ is compact. Hence, the operator $\mathcal{S}$ is compact on $\mathbf{H}_{0,\sigma}^{1}(\Omega)$.
		The {\em Leray-Schauder theorem} (Theorem \ref{T2.1}) implies that the mapping $(V,G)\rightarrow \mathcal{S}(V,G)$ has a fixed point $(V,G)\in\mathbf{H}_{0,\sigma}^{1}(\Omega)$, such that $\|(V,G)\|_{\mathbf{H}_{0,\sigma}^{1}(\Omega)}\leq C$.
		\end{proof}
		Next, we will employ the invading domains technique to
		prove the existence of weak solutions to \eqref{PLS} in the whole space. 
		\begin{lem}[a priori bound for invading method]\label{L6}
			Assume that $(U_{0},B_{0})$ satisfies \eqref{initial1}, and let $(V_{k},G_{k})\in \mathbf{H}_{0,\sigma}^{1}(B_{k})$ be a solution to the problem \eqref{ESp4} with $\Omega=B_{k}$. Then, we have
			\begin{equation}\label{uniform}
			\|(V_{k},G_{k})\|_{\mathbf{H}_{0}^{1}(B_{k})}\leq C(U_{0},B_{0}),
			\end{equation}
		where the constant $C(U_{0},B_{0})$ is independent on $k$.	
		\end{lem}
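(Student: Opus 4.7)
The plan is to rerun the blow-up argument of Lemma \ref{L5}, but with the ambient domain $B_{k}$ itself expanding to $\R^{3}$. Suppose for contradiction that $I_{k}:=\|(V_{k},G_{k})\|_{\mathbf{H}^{1}_{0}(B_{k})}\to\infty$ along a subsequence. Extend $(V_{k},G_{k})$ by zero outside $B_{k}$, so that $(V_{k},G_{k})\in \mathbf{H}^{1}_{\sigma}(\R^{3})$, and set $\hat V_{k}=V_{k}/I_{k}$, $\hat G_{k}=G_{k}/I_{k}$. Then $\|(\hat V_{k},\hat G_{k})\|_{\mathbf{H}^{1}(\R^{3})}=1$, so along a further subsequence $(\hat V_{k},\hat G_{k})\rightharpoonup(\bar V,\bar G)$ in $\mathbf{H}^{1}_{\sigma}(\R^{3})$ and, by Rellich--Kondrachov, $(\hat V_{k},\hat G_{k})\to(\bar V,\bar G)$ in $\mathbf{L}^{p}_{loc}(\R^{3})$ for $2\leq p<6$.

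Next I would test the equations of \eqref{ESp4} with $V_{k}$ and $G_{k}$, add, divide by $I_{k}^{2}$, and pass to the limit exactly as in \eqref{LE13}. The drift and nonlinear terms involving only $V_{k},G_{k}$ are handled verbatim; the terms coupling with $U_{0},B_{0}$ require only $U_{0},\nabla U_{0},B_{0},\nabla B_{0}\in L^{4}(\R^{3})$, which follows from \eqref{initial1}. This yields the analogue of \eqref{LE13}:
\begin{align*}
1+\tfrac{1}{4}\bigl(\|\bar V\|_{2}^{2}+\|\bar G\|_{2}^{2}\bigr)
=\int_{\R^{3}}\bigl(\bar V\!\cdot\!\nabla\bar V-\bar G\!\cdot\!\nabla\bar G\bigr)\!\cdot\!U_{0}\,dx
+\int_{\R^{3}}\bigl(\bar V\!\cdot\!\nabla\bar G-\bar G\!\cdot\!\nabla\bar V\bigr)\!\cdot\!B_{0}\,dx.
\end{align*}
Here the normalization factor $\lambda_{0}$ from Lemma \ref{L5} is absent (we are already at $\lambda=1$), so in particular the left-hand side is at least $1$.

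In parallel, passing to the limit in \eqref{ESp4} divided by $I_{k}^{2}$ against any $\varphi,\psi\in\mathbf{C}^{\infty}_{0,\sigma}(\R^{3})$ shows that $(\bar V,\bar G)\in\mathbf{H}^{1}_{\sigma}(\R^{3})$ is a distributional solution of the stationary Euler--type system \eqref{L1} on all of $\R^{3}$; by De Rham \cite{TR} there exists $\bar P$ with $\bar V\!\cdot\!\nabla\bar V-\bar G\!\cdot\!\nabla\bar G=-\nabla\bar P$ and $\bar V\!\cdot\!\nabla\bar G-\bar G\!\cdot\!\nabla\bar V=0$. Since $\bar V,\bar G\in L^{6}(\R^{3})$ by Sobolev, the nonlinear tensor $\bar V\otimes\bar V-\bar G\otimes\bar G\in L^{3}(\R^{3})$, and $\bar P=(-\Delta)^{-1}\mathrm{div}\,\mathrm{div}(\bar V\otimes\bar V-\bar G\otimes\bar G)$ lies in $L^{3}(\R^{3})$ by Calder\'on--Zygmund. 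Testing the second Euler equation against $B_{0}\in L^{4}(\R^{3})$ kills the $B_{0}$--integral above, while testing the first against $U_{0}$ rewrites the key identity as
\begin{equation*}
1+\tfrac{1}{4}\bigl(\|\bar V\|_{2}^{2}+\|\bar G\|_{2}^{2}\bigr)=-\int_{\R^{3}}\nabla\bar P\cdot U_{0}\,dx.
\end{equation*}

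The heart of the argument, and what I expect to be the main obstacle, is justifying that $\int_{\R^{3}}\nabla\bar P\cdot U_{0}\,dx=0$; in the bounded setting of Lemma \ref{L5} this came from Lemma \ref{L0}, but on $\R^{3}$ there is no boundary to exploit. I will use a cutoff $\eta_{R}\in C^{\infty}_{0}(B_{2R})$ with $\eta_{R}\equiv1$ on $B_{R}$ and $|\nabla\eta_{R}|\leq C/R$. Since $\mathrm{div}\,U_{0}=0$,
\begin{equation*}
\int_{\R^{3}}\nabla\bar P\cdot(U_{0}\eta_{R})\,dx=-\int_{\R^{3}}\bar P\,U_{0}\cdot\nabla\eta_{R}\,dx,
\end{equation*}
and by H\"older
\begin{equation*}
\Bigl|\int\bar P\,U_{0}\cdot\nabla\eta_{R}\Bigr|\leq\frac{C}{R}\|\bar P\|_{L^{3}(B_{2R}\setminus B_{R})}\|U_{0}\|_{L^{3/2}(B_{2R}\setminus B_{R})}.
\end{equation*}
From $|U_{0}|\lesssim(1+|x|)^{-1}$ one computes $\|U_{0}\|_{L^{3/2}(B_{2R}\setminus B_{R})}\leq CR$, while $\|\bar P\|_{L^{3}(B_{2R}\setminus B_{R})}\to0$ as $R\to\infty$ since $\bar P\in L^{3}(\R^{3})$. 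Hence the right-hand side is $o(1)$, and sending $R\to\infty$ gives $\int_{\R^{3}}\nabla\bar P\cdot U_{0}=0$. Combined with the identity above, this forces $1\leq 0$, the sought contradiction, and \eqref{uniform} follows.
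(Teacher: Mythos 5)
Your proposal is essentially correct and reaches the right contradiction, but it diverges from the paper's proof in one substantive place and contains two small inaccuracies worth flagging. First, the stated key identity is off: with your normalization $I_{k}=\|(V_{k},G_{k})\|_{\mathbf{H}^{1}_{0}(B_{k})}$, dividing the energy identity by $I_{k}^{2}$ gives a left-hand side equal to $1-\tfrac34(\|\hat V_{k}\|_{2}^{2}+\|\hat G_{k}\|_{2}^{2})$, whose limit along a subsequence is $1-\tfrac34\ell$ for some $\ell\in[0,1]$, \emph{not} $1+\tfrac14(\|\bar V\|_{2}^{2}+\|\bar G\|_{2}^{2})$ (you cannot identify $\ell$ with $\|\bar V\|_{2}^{2}+\|\bar G\|_{2}^{2}$ because there is no global strong $L^{2}$ convergence on the expanding domains $B_{k}$). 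The paper sidesteps this by building the $\tfrac14$-weighted $L^{2}$ norms directly into its $I_{k}^{2}$ so that the normalized left-hand side is exactly $1$. Since positivity is all that is used, your argument survives, but the identity as written is not correct. Second, your claim that $\bar V\cdot\nabla\bar G-\bar G\cdot\nabla\bar V=0$ is an overclaim: De Rham only yields $\bar V\cdot\nabla\bar G-\bar G\cdot\nabla\bar V=-\nabla Q$ for some $Q$, since the weak formulation tests only against divergence-free functions; one then has to run the same cutoff (or density) argument for $\int\nabla Q\cdot B_{0}$ that you run for $\int\nabla\bar P\cdot U_{0}$. Finally, the genuine difference from the paper is in how $\int\nabla\bar P\cdot U_{0}=0$ is obtained. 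You introduce the pressure via De Rham, prove $\bar P\in L^{3}$ by Calder\'on--Zygmund, and integrate by parts against a cutoff $\eta_{R}$ to push the boundary term to infinity; this is valid and the estimate $R^{-1}\cdot\|\bar P\|_{L^{3}(B_{2R}\setminus B_{R})}\cdot R\to 0$ is correct. The paper never introduces $\bar P$ at all: it observes that $\bar V\cdot\nabla\bar V-\bar G\cdot\nabla\bar G\in L^{4/3}(\R^{3})$ and $U_{0},B_{0}\in L^{4}(\R^{3})$, and then approximates $U_{0},B_{0}$ in $\mathbf{L}^{4}$ by functions in $\mathbf{C}^{\infty}_{0,\sigma}(\R^{3})$ to pass from the tested identity to the untested one. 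The paper's route is shorter and requires less regularity theory for $\bar P$; yours buys a more explicit picture of the pressure and is perhaps more robust, but at the cost of the De Rham and Calder\'on--Zygmund machinery.
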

		\begin{proof}
			This proof has a structure analogous to that of Lemma \ref{L5}, employing a proof by contradiction. By assuming the assertion to be false, we can obtain a sequence of solutions  $(V_{k},G_{k})\in \mathbf{H}_{0,\sigma}^{1}(B_{k})$ such that
			$$
			I_{k}^{2}:=\int_{B_{k}}\Big(\frac{1}{4}|V_{k}|^{2}+\frac{1}{4}|G_{k}|^{2}+|\nabla V_{k}|^{2}+|\nabla G_{k}|^{2}\Big)dx\rightarrow+\infty,
			$$
			and consider the normalized sequence
			\begin{equation*}
				\hat{V}_{k}=\frac{V_{k}}{I_{k}}, \ \ \hat{G}_{k}=\frac{G_{k}}{I_{k}},\ \ \mbox{and}\ \ \hat{P}_{k}=\frac{P_{k}}{I_{k}^{2}}.
			\end{equation*}
			The sequence $(\hat{V}_{k},\hat{G}_{k})$ is bounded in $\mathbf{H}_{\sigma}^{1}(B_{k})$. By the classical extension theorem, there exists $(\bar{V},\bar{G})\in\mathbf{H}_{\sigma}^{1}(\R^{3})$, after extracting a subsequence, which will still be denoted by $(\hat{V}_{k},\hat{G}_{k})$, it follows that
			$$
			(\hat{V}_{k},\hat{G}_{k})\rightharpoonup(\bar{V},\bar{G}),\ \ \mbox{in}\ \ \mathbf{H}_{\sigma}^{1}(\R^{3})
			$$
			and
			$$
			(\hat{V}_{k},\hat{G}_{k})\rightarrow(\bar{V},\bar{G}),\ \ \mbox{locally in}\ \ \mathbf{L}^{p}(\R^{3})\ \ \mbox{for all}\ \ 2\leq p<6.
			$$
			Let us multiply the first equation of system \eqref{ESp4} by $V_{k}$ and the second equation of system \eqref{ESp4} by $G_{k}$, respectively, and integrating on $B_{k}$, one obtains
			\begin{align}\label{LE32}
				\begin{split}
					\frac{1}{4}\int_{B_{k}}|V_{k}|^{2}dx&+\int_{B_{k}}|\nabla V_{k}|^{2}dx\\&=\int_{B_{k}}(B_{0}\cdot\nabla B_{0}-U_{0}\cdot\nabla U_{0})V_{k}dx+\int_{B_{k}}(F_{2k}-F_{1k})V_{k}dx
				\end{split}
			\end{align}
			and
			\begin{align}\label{LE33}
				\begin{split}
					\frac{1}{4}\int_{B_{k}}|G_{k}|^{2}dx&+\int_{B_{k}}|\nabla G_{k}|^{2}dx\\&=\int_{B_{k}}(B_{0}\cdot\nabla U_{0}-U_{0}\cdot\nabla B_{0})G_{k}dx+\int_{B_{k}}(F_{4k}-F_{3k})G_{k}dx.
				\end{split}
			\end{align}
			Adding \eqref{LE32} and \eqref{LE33}, we multiply by $\frac{1}{I_{k}^{2}}$ and subsequently take the limit as $k\rightarrow\infty$. Following the calculations outlined in Lemma \ref{L2}, we have
			\begin{align}\label{LE34}
				\begin{split}
					1&=-\int_{\R^{3}}\bar{V}\cdot\nabla U_{0}\bar{V}dx+\int_{\R^{3}}\bar{G}\cdot\nabla B_{0}\bar{V}-\int_{\R^{3}}\bar{V}\cdot\nabla B_{0}\bar{G}dx+\int_{\R^{3}}\bar{G}\cdot\nabla U_{0}\bar{G}dx
					\\&=\int_{\R^{3}}\bar{V}\cdot\nabla\bar{V}U_{0}dx-\int_{\R^{3}}\bar{G}\cdot\nabla\bar{G}U_{0}dx
					+\int_{\R^{3}}\bar{V}\cdot\nabla\bar{G}B_{0}dx-\int_{\R^{3}}\bar{G}\cdot\nabla\bar{V}B_{0}dx.
				\end{split}
			\end{align}
			Dividing the first and second equations of \eqref{ESp4} by $I_{k}^{2}$, we obtain
			\begin{align}\label{LE35}
				\begin{split}
					\hat{V}_{k}\cdot\nabla\hat{V}_{k}&-\hat{G}_{k}\cdot\nabla\hat{G}_{k}+\nabla\hat{P}_{k}=\frac{1}{I_{k}}\Big(\Delta \hat{V}_{k}+\frac{1}{2}\hat{V}_{k}+\frac{1}{2}x\cdot\nabla\hat{V}_{k}\\&+\frac{1}{I_{k}}B_{0}\cdot\nabla B_{0}- \frac{1}{I_{k}}U_{0}\cdot\nabla U_{0}-U_{0}\nabla\hat{V}_{k}-\hat{V}_{k}\nabla U_{0}+B_{0}\nabla\hat{G}_{k}+\hat{G}_{k}\nabla B_{0}\Big)\\
				\end{split}
			\end{align}
			and
			\begin{align}\label{LE36}
				\begin{split}
					\hat{V}_{k}\cdot\nabla\hat{G}_{k}&-\hat{G}_{k}\cdot\nabla\hat{V}_{k}=\frac{1}{I_{k}}\Big(\Delta \hat{G}_{k}+\frac{1}{2}\hat{G}_{k}+\frac{1}{2}x\cdot\nabla\hat{G}_{k}\\&+\frac{1}{I_{k}}B_{0}\cdot\nabla U_{0}- \frac{1}{I_{k}}U_{0}\cdot\nabla B_{0}-U_{0}\nabla\hat{G}_{k}-\hat{V}_{k}\nabla B_{0}+B_{0}\nabla\hat{V}_{k}+\hat{G}_{k}\nabla U_{0}\Big).
				\end{split}
			\end{align}
			Next, consider the weak formulation of \eqref{LE35}-\eqref{LE36}, i.e., testing an arbitrary $\varphi\in C_{0,\sigma}^{\infty}(\R^{3})$ to \eqref{LE35} and $\psi\in C_{0,\sigma}^{\infty}(\R^{3})$ to \eqref{LE36}, respectively. Taking $k\rightarrow+\infty$, we have
			\begin{equation*}
				\int_{\R^{3}}(\bar{V}\cdot\nabla\bar{V}-\bar{G}\cdot\nabla\bar{G})\varphi dx=0,\ \ \ \mbox{for all}\ \ \varphi\in C_{0,\sigma}^{\infty}(\R^{3})
			\end{equation*}
			and
			\begin{equation*}
				\int_{\R^{3}}(\bar{V}\cdot\nabla\bar{G}-\bar{G}\cdot\nabla\bar{V})\psi dx=0,\ \ \ \mbox{for all}\ \ \psi\in C_{0,\sigma}^{\infty}(\R^{3}).
			\end{equation*}
			Approximating $(U_{0},B_{0})$ by the test function $(\varphi,\psi)\in \mathbf{C}_{0,\sigma}^{\infty}(\R^{3})$ in $\mathbf{L}^{4}-$norm implies that
			\begin{equation}\label{LE37}
				-\int_{\R^{3}}\bar{V}\cdot\nabla U_{0}\bar{V}dx+\int_{\R^{3}}\bar{G}\cdot\nabla U_{0}\bar{G}dx=\int_{\R^{3}}(\bar{V}\cdot\nabla\bar{V}-\bar{G}\cdot\nabla\bar{G})U_{0}dx=0
			\end{equation}
			and
			\begin{equation}\label{LE38}
				\int_{\R^{3}}\bar{G}\cdot\nabla B_{0}\bar{V}dx-\int_{\R^{3}}\bar{V}\cdot\nabla B_{0}\bar{G}dx=\int_{\R^{3}}(\bar{V}\cdot\nabla\bar{G}-\bar{G}\cdot\nabla\bar{V})B_{0}dx=0.
			\end{equation}
			Plugging \eqref{LE37} and \eqref{LE38} into \eqref{LE34} yields a contradiction, which allows us to conclude the proof.
		\end{proof}
		Once the uniform bound \eqref{uniform} has been established, 
		we can construct a weak solution $(V,G,P)$ satisfies Definition \ref{D-0} through an appropriate approximation.
		\begin{thm}[existence in $\R^{3}$]\label{T1.2}
			Assume that $(U_{0},B_{0})$ satisfies \eqref{initial1}. Then, the elliptic system \eqref{PLS} has a weak solution $(V,G,P)\in H_{\sigma}^{1}(\R^{3})\times H_{\sigma}^{1}(\R^{3})\times L^{2}(\R^{3})$ in the sense of Definition \ref{D-0}, and
			$$
			\|(V,G)\|_{\mathbf{H}^{1}(\R^{3})}\leq C(U_{0},B_{0}).
			$$
		\end{thm}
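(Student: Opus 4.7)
The plan is the invading-domains method: Proposition 3.2 furnishes bounded-domain solutions $(V_k,G_k)\in \mathbf{H}^1_{0,\sigma}(B_k)$, and Lemma 3.4 provides the $k$-independent bound $\|(V_k,G_k)\|_{\mathbf{H}^1_0(B_k)}\le C(U_0,B_0)$. Extending each pair by zero to $\R^3$, one gets a uniformly bounded sequence in $\mathbf{H}^1_\sigma(\R^3)$. After a diagonal Rellich--Kondrachov extraction, one has $(V_k,G_k)\rightharpoonup(V,G)$ in $\mathbf{H}^1_\sigma(\R^3)$ and $(V_k,G_k)\to(V,G)$ in $\mathbf{L}^p_{loc}(\R^3)$ for every $2\le p<6$, and weak lower semicontinuity preserves the uniform bound at the limit.

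Next I pass to the limit in the formulation \eqref{weak1}--\eqref{weak2} against solenoidal test fields $(\varphi,\psi)\in\mathbf{C}^\infty_{0,\sigma}(\R^3)$. For $k$ large, $\mathrm{supp}(\varphi,\psi)\subset B_k$, and on this support the weight $x$ is bounded, so weak $H^1$ convergence handles the linear terms $\int\nabla V_k{:}\nabla\varphi$, $\int V_k\cdot\varphi$, and $\int(x\cdot\nabla V_k)\cdot\varphi$. The quadratic terms are rewritten as divergences, e.g.\ $V_k\cdot\nabla V_k=\mathrm{div}(V_k\otimes V_k)$, and after an integration by parts they pass to the limit by strong $\mathbf{L}^4_{loc}$ convergence; the mixed terms coupling $(V_k,G_k)$ with $(U_0,B_0)$ use the local boundedness from \eqref{initial1}. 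This gives \eqref{weak1}--\eqref{weak2} for the limit $(V,G)$ against every $(\varphi,\psi)\in\mathbf{C}^\infty_{0,\sigma}(\R^3)$.

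I then recover the pressure via De Rham's theorem. Since $\mathrm{div}\,V=0$ forces $\mathrm{div}(x\cdot\nabla V)=0$, the distributional equation dictates
\[
-\Delta P=\partial_i\partial_j\bigl((V+U_0)_i(V+U_0)_j-(G+B_0)_i(G+B_0)_j\bigr).
\]
Because $V,G\in L^2\cap L^6$ and $|U_0|+|B_0|\lesssim(1+|x|)^{-1}\in L^\infty(\R^3)$, every quadratic combination of these fields lies in $L^2(\R^3)$ (using the $L^\infty$-decay for the pure $U_0,B_0$ terms and H\"older for mixed terms), and Calderón--Zygmund theory for the iterated Riesz transforms $R_iR_j$ yields $P\in L^2(\R^3)$ with the expected norm bound, so $(V,G,P)\in \mathbf{H}^1_\sigma(\R^3)\times L^2(\R^3)$ with the quantitative estimate claimed.

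The final and most delicate step is upgrading the test-function class to that of Definition 2.1, namely $(\varphi,\psi)\in\mathbf{H}^1(\R^3)$ with $\|(|\cdot|\varphi,|\cdot|\psi)\|_2<\infty$ (not necessarily solenoidal). Every individual term then makes sense: the weighted convective term is controlled via $\int(x\cdot\nabla V)\cdot\varphi=\int\nabla V{:}(x\otimes\varphi)\le\|\nabla V\|_2\,\||\cdot|\varphi\|_2$, while the pressure coupling $\int P\,\mathrm{div}\,\varphi$ uses $P\in L^2$ and $\mathrm{div}\,\varphi\in L^2$. To reach this class I would approximate by $(\chi_n\varphi,\chi_n\psi)$ with a standard smooth cutoff $\chi_n\in C^\infty_0(B_{2n})$ equal to $1$ on $B_n$; convergence in $\mathbf{H}^1$ and in the weighted $L^2$ norm then propagates the identity from $\mathbf{C}^\infty_{0,\sigma}$-test-fields (via Helmholtz decomposition into solenoidal part, which feeds \eqref{weak1}--\eqref{weak2}, and gradient part, which is absorbed by $\int P\,\mathrm{div}\,\varphi$) to the full class of Definition 2.1. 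I expect this density/extension step to be the main obstacle: one must simultaneously control every term in \eqref{W1}--\eqref{W2} under the weighted topology, whereas the first three steps are essentially routine once Lemma 3.4 is in hand.
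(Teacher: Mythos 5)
Your proposal follows essentially the same route as the paper: Proposition \ref{p2} (bounded-domain existence via Leray--Schauder), Lemma \ref{L6} (the $k$-independent a priori bound obtained by a blow-up/contradiction argument), zero-extension plus weak/strong local extraction, passage to the limit in the solenoidal weak formulation using strong $\mathbf{L}^p_{loc}$ convergence for the quadratic terms, De Rham plus Calder\'on--Zygmund for the pressure, and finally a density/approximation step to reach the full test-function class of Definition \ref{D-0}. The pressure argument you give is correct and matches what the paper does (the decay \eqref{initial1} gives $U_0,B_0\in L^4$, so together with $V,G\in L^4$ by interpolation all quadratic combinations land in $L^2$); the paper records this as $\|P\|_p<\infty$ for $p\in(\tfrac32,3]$, which includes $p=2$. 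One small comment: you flag the final upgrade of test functions as "the main obstacle," but once $P\in L^2$ is in hand and the weighted condition $\||\cdot|\varphi\|_2<\infty$ is imposed, each term in \eqref{W1}--\eqref{W2} is individually integrable, and the cutoff plus Helmholtz-splitting argument you sketch is exactly the routine approximation the paper invokes; the genuinely delicate content of Theorem \ref{T1.2} lives in the a priori estimate of Lemma \ref{L6}, which you correctly cite rather than reprove.
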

		\begin{proof}
			According to Proposition \ref{p2}, there exists a sequence of solutions $(V_{k},G_{k})\in\mathbf{H}_{0,\sigma}^{1}(B_{k})$ for the \textbf{PLS} \eqref{PLS} with $\Omega=B_{k}$. Furthermore, by Lemma \ref{L6}, this sequence $(V_{k},G_{k})$ satisfies
			$$
			\|(V_{k},G_{k})\|_{\mathbf{H}_{0,\sigma}^{1}(B_{k})}\leq C(U_{0},B_{0}).
			$$
			The uniform bound stated above implies that there exists $(V,G)\in\mathbf{H}_{\sigma}^{1}(\R^{3})$ and a convergent subsequence $(V_{k_{i}},G_{k_{i}})$, such that
			\begin{equation}\label{E3.1}
				(V_{k_{i}},G_{k_{i}})\rightharpoonup(V,G), \ \ \mbox{locally in}\ \ \mathbf{H}_{\sigma}^{1}(\R^{3})
			\end{equation}
			and
			\begin{equation}\label{E3.2}
				(V_{k_{i}},G_{k_{i}})\rightarrow(V,G), \ \ \mbox{locally in}\ \ \mathbf{L}^{p}(\R^{3}), \ \ \mbox{for each}\ \ 2\leq p<6,
			\end{equation}
			as $i\rightarrow+\infty$. We assert that $(V,G)$ is a solution to the \textbf{PLS} \eqref{PLS} in the whole space. First, we examine the weak formulation of the first equation in \eqref{PLS}. For any $\varphi\in C_{0,\sigma}^{\infty}(\R^{3})$, taking $k$ large enough such that $\supp\varphi\subset B_{k}$, one can easily get
			\begin{align}\label{E33}
				\begin{split}
					\int_{\R^{3}}&\nabla V_{k_{i}}:\nabla\varphi dx+\int_{\R^{3}}\frac{1}{2}V_{k_{i}}\varphi dx+\int_{\R^{3}}\frac{1}{2}x\cdot\nabla V_{k_{i}}\varphi dx\\&
					\rightarrow\int_{\R^{3}}\nabla V:\nabla\varphi dx+\int_{\R^{3}}\frac{1}{2}V\varphi dx+\int_{\R^{3}}\frac{1}{2}x\cdot\nabla V\varphi dx,\ \ \mbox{as}\ \  i\rightarrow+\infty.
				\end{split}
			\end{align}
			Next, we will consider the nonlinear term. We claim that
			\begin{equation}\label{E3.3}
				\int_{\R^{3}}V_{k_{i}}\cdot\nabla V_{k_{i}}\varphi dx\rightarrow\int_{\R^{3}}V\cdot\nabla V\varphi dx,  \ \mbox{as}\ \  i\rightarrow+\infty.
			\end{equation}
			In fact, a straightforward calculation follows that
			\begin{align*}
				\begin{split}
					\int_{\R^{3}}V_{k_{i}}&\cdot\nabla V_{k_{i}}\varphi dx-\int_{\R^{3}}V\cdot\nabla V\varphi dx\\&=\int_{\R^{3}}\big(V_{k_{i}}\otimes (V-V_{k_{i}})\big)\nabla\varphi dx+\int_{\R^{3}}\big((V-V_{k_{i}})\otimes V\big)\nabla\varphi dx.
				\end{split}
			\end{align*}
			By applying the H\"{o}der inequality and \eqref{E3.2}, taking $i\rightarrow\infty$, we immediately get
			$$
			\int_{\R^{3}}\big(V_{k_{i}}\otimes (V-V_{k_{i}})\big)\nabla\varphi dx\leq\|V_{k_{i}}\|_{4}\|V-V_{k_{i}}\|_{L^{2}(B_{k})}\|\nabla\varphi\|_{2}\rightarrow0.
			$$
			Similarly,
			$$
			\int_{\R^{3}}\big((V-V_{k_{i}})\otimes V\big)\nabla\varphi dx\rightarrow0,\ \mbox{as}\ \  i\rightarrow\infty.
			$$
			Thus, \eqref{E3.3} has been proven. By employing a similar calculation,
			\begin{equation}\label{E3.4}
				\int_{\R^{3}}G_{k_{i}}\cdot\nabla G_{k_{i}}\varphi dx\rightarrow\int_{\R^{3}}G\cdot\nabla G\varphi dx,\ \mbox{as}\ \  i\rightarrow\infty.
			\end{equation}
			Now, we consider the other term, utilizing the fact that $U_{0},\nabla U_{0}\in L^{\infty}(\R^{3})$, which enables us to obtain
			\begin{align}\label{E34}
				\begin{split}
					&\int_{\R^{3}}U_{0}\cdot\nabla V_{k_{i}}\varphi dx+\int_{\R^{3}}V_{k_{i}}\cdot\nabla U_{0}\varphi dx\\&=-\int_{\R^{3}}U_{0}\otimes V_{k_{i}}\nabla\varphi dx-\int_{\R^{3}}V_{k_{i}}\otimes U_{0}\nabla\varphi dx\\&\rightarrow-\int_{\R^{3}}U_{0}\otimes V\nabla\varphi dx-\int_{\R^{3}}V\otimes U_{0}\nabla\varphi dx\\&=\int_{\R^{3}}U_{0}\cdot\nabla V\varphi dx+\int_{\R^{3}}V\cdot\nabla U_{0}\varphi dx,\ \mbox{as}\ \  i\rightarrow\infty.
				\end{split}
			\end{align}
			Since $B_{0},\nabla B_{0}\in L^{\infty}(\R^{3})$, as previously mentioned
			\begin{align}\label{E35}
				\begin{split}
					&\int_{\R^{3}}B_{0}\cdot\nabla G_{k_{i}}\varphi dx+\int_{\R^{3}}G_{k_{i}}\cdot\nabla B_{0}\varphi dx\\&\rightarrow\int_{\R^{3}}B_{0}\cdot\nabla G\varphi dx+\int_{\R^{3}}G\cdot\nabla B_{0}\varphi dx,\ \mbox{as}\ \  i\rightarrow\infty.
				\end{split}
			\end{align}
			Collecting \eqref{E33}-\eqref{E35}, one can conclude that
			\begin{align}\label{E36}
				\begin{split}
					&\int_{\R^{3}}\nabla V:\nabla\varphi dx\\=&\int_{\R^{3}}\Big(\frac{1}{2}V+\frac{1}{2}x\cdot \nabla V-(V+U_{0})\cdot\nabla(V+U_{0})+(G+B_{0})\cdot\nabla(G+B_{0})\Big)\varphi dx.
				\end{split}
			\end{align}
			Similarly, testing the second equation in \eqref{PLS} to an arbitrary function $\psi\in C_{0,\sigma}^{\infty}(\R^{3})$ and taking 
			$i\rightarrow\infty$
			allows us to derive
			\begin{align}\label{E37}
				\begin{split}
					&\int_{\R^{3}}\nabla G_{k_{i}}:\nabla\psi dx+\int_{\R^{3}}\frac{1}{2}G_{k_{i}}\psi dx+\int_{\R^{3}}\frac{1}{2}x\cdot\nabla G_{k_{i}}\psi dx\\&
					\rightarrow\int_{\R^{3}}\nabla G:\nabla\psi dx+\int_{\R^{3}}\frac{1}{2}G\psi dx+\int_{\R^{3}}\frac{1}{2}x\cdot\nabla G\psi dx
				\end{split}
			\end{align}
			and
			\begin{align}\label{E38}
				\begin{split}
					&\int_{\R^{3}}\Big((V_{k_{i}}+U_{0})\cdot\nabla G_{k_{i}}+V_{k_{i}}\cdot\nabla B_{0}-(G_{k_{i}}+B_{0})\cdot\nabla V_{k_{i}} +G_{k_{i}}\cdot\nabla U_{0}\Big)\psi dx\\&\rightarrow\int_{\R^{3}}\Big((V+U_{0})\cdot\nabla G+V\cdot\nabla B_{0}-(G+B_{0})\cdot\nabla V+G\cdot\nabla U_{0}\Big)\psi dx.
				\end{split}
			\end{align}
			From \eqref{E37} and \eqref{E38}, we obtain
			\begin{align}\label{E39}
				\begin{split}
					&\int_{\R^{3}}\nabla G:\nabla\psi dx\\&=\int_{\R^{3}}\Big(\frac{1}{2}G+\frac{1}{2}x\cdot \nabla G-(V+U_{0})\cdot\nabla(G+B_{0})+(G+B_{0})\cdot\nabla(V+U_{0})\Big)\psi dx.
				\end{split}
			\end{align}
			Therefore, we construct a solution $(V,G)\in \mathbf{H}^{1}_{\sigma}(\R^{3})$ to \textbf{PLS} \eqref{PLS} and 
			$$
			\|(V,G)\|_{\mathbf{H}^{1}(\R^{3})}\leq C(U_{0},B_{0}).
			$$
		Moreover, from identity \eqref{E36}, the existence of pressure $P$ can be obtained by the well-known De Rham theorem \cite{TR} and
			\begin{align}\label{press}
				\begin{split}
					-\Delta P&=\mbox{divdiv}\Big((V+U_{0})\otimes(V+U_{0})-(G+B_{0})\otimes(G+B_{0})\Big).
				\end{split}
			\end{align}
			 Under the assumption \eqref{initial1}, along with the classical Calder\'{o}n-Zygmund theorem, for all $p\in(\frac{3}{2},3]$ ,we have
			\begin{align*}
				\begin{split}
					\|P\|_{p}&\leq C\Big(\|(V,G)\|_{2p}^{2}+\|V\|_{2p}\|U_{0}\|_{2p}+\|(U_{0},B_{0})\|_{2p}^{2}\Big)\\&\leq C\Big(\|(V,G)\|_{\mathbf{H}^{1}(\R^{3})}^{2}+\|(U_{0},B_{0})\|_{2p}^{2}\Big)<\infty.
				\end{split}
			\end{align*}
			Therefore, by employing an appropriate approximation, we construct a weak solution $(V,G,P)\in H^{1}_{\sigma}(\R^{3})\times H^{1}_{\sigma}(\R^{3})\times L^{2}(\R^{3})$ solves \eqref{PLS} in the sense of Definition \ref{D-0}.
		\end{proof}
		We will now demonstrate that
	 the weak solution $(V,G,P)$ constructed in Theorem \ref{T1.2} is, in fact, smooth. 
		\begin{lem}[\cite{LMZ}]\label{L0-1}
			Let $p\in(1,\infty)$, $k\geq0$ and $F=(F_{ij})\in W^{k,p}(\R^{n})$. Then the equation
			\begin{equation}\label{L1-1}
				\begin{cases}
					-\Delta u+u+\nabla\pi={\rm div}\,F\ \ \ \ &x\in\R^{n},\\
					{\rm div}\,u=0\ \ \ \ &x\in\R^{n},
				\end{cases}
			\end{equation}
			has a solution $v\in W^{k+1,p}(\R^{n})$, which is a unique solution in $L^{p}(\R^{n})$.
		\end{lem}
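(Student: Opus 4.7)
The plan is to reduce the Stokes-type resolvent system to an equation for $u$ alone by eliminating the pressure via the Leray projection $\mathbb{P}=\mathrm{Id}-\nabla(-\Delta)^{-1}\mathrm{div}$. Taking the divergence of the first equation and using $\mathrm{div}\,u=0$ yields $\Delta\pi=\mathrm{div}\,\mathrm{div}\,F$, so $\pi=-(-\Delta)^{-1}\mathrm{div}\,\mathrm{div}\,F$ (uniquely determined up to an additive constant within the tempered distributions). Substituting back produces the resolvent equation
\begin{equation*}
(-\Delta+\mathrm{Id})\,u=\mathbb{P}\,\mathrm{div}\,F.
\end{equation*}

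For existence and the $W^{k+1,p}$ estimate, define $u$ via the Fourier multiplier
\begin{equation*}
\widehat{u}(\xi)=\frac{1}{|\xi|^{2}+1}\,\widehat{\mathbb{P}\,\mathrm{div}\,F}(\xi),
\end{equation*}
so that componentwise the map $F_{ij}\mapsto u_{m}$ has symbol of the form $(|\xi|^{2}+1)^{-1}(i\xi_{j})(\delta_{mi}-\xi_{m}\xi_{i}/|\xi|^{2})$. Consider the base case $k=0$ first. To estimate $u$ and $\nabla u$ in $L^{p}$, multiply this symbol by $1$ or by $i\xi_{\ell}$; each resulting symbol is smooth away from the origin, bounded by a constant at infinity (the total degree is $\leq 0$), and vanishes at the origin to appropriate positive order once one factors out the $0$-homogeneous Leray symbol. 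One then verifies the Mikhlin-H\"ormander bounds $|\xi|^{|\beta|}|\partial^{\beta}m(\xi)|\leq C$ for $|\beta|\leq\lceil n/2\rceil+1$, which gives the $L^{p}$ estimate $\|u\|_{L^{p}}+\|\nabla u\|_{L^{p}}\leq C\|F\|_{L^{p}}$. For the general step $k\geq 1$, differentiating the equation shows that for any multi-index $|\alpha|\leq k$ the function $\partial^{\alpha}u$ solves the same system with $F$ replaced by $\partial^{\alpha}F\in L^{p}$. Applying the base case to each such equation and summing over $|\alpha|\leq k$ yields $u\in W^{k+1,p}(\R^{n})$ with $\|u\|_{W^{k+1,p}}\leq C\|F\|_{W^{k,p}}$.

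For uniqueness in $L^{p}$, suppose $u\in L^{p}(\R^{n})$ is a distributional solution of the homogeneous problem ($F\equiv 0$) with $\mathrm{div}\,u=0$. Since $L^{p}\subset\mathcal{S}'(\R^{n})$, its Fourier transform $\widehat{u}$ makes sense as a tempered distribution. Applying $\mathbb{P}$ to $-\Delta u+u+\nabla\pi=0$ annihilates the pressure gradient (as $\mathbb{P}\nabla=0$) while preserving $u$ (as $\mathrm{div}\,u=0$ implies $\mathbb{P}u=u$), leaving $(-\Delta+\mathrm{Id})u=0$. On the Fourier side this reads $(|\xi|^{2}+1)\widehat{u}=0$, and since the symbol $|\xi|^{2}+1$ is nowhere vanishing we conclude $\widehat{u}=0$, hence $u\equiv 0$.

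The only delicate point is verifying the Mikhlin-H\"ormander conditions for the composite symbol involving the Leray projector; in particular, one must check that the $0$-homogeneous factor $\delta_{ij}-\xi_{i}\xi_{j}/|\xi|^{2}$ combined with the resolvent smoothing $(|\xi|^{2}+1)^{-1}$ produces a genuine Mikhlin multiplier of order $|\alpha|-1$ when paired with a derivative $\xi^{\alpha}$. This is standard for Riesz-transform-type symbols, and once it is in place the existence, regularity, and uniqueness assertions all follow routinely from Fourier analysis of constant-coefficient elliptic systems.
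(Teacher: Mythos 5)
The paper does not actually prove this lemma; it is cited from \cite{LMZ} (Lai--Miao--Zheng, Adv.\ Math.\ 352 (2019)), who establish the analogous (fractional) result also by Fourier-multiplier methods, so your approach is consonant with theirs. Your construction of $u$ via the symbol $m(\xi)=(|\xi|^{2}+1)^{-1}(i\xi_{j})\big(\delta_{mi}-\xi_{m}\xi_{i}/|\xi|^{2}\big)$ and the Mikhlin--H\"{o}rmander verification are correct: near the origin each derivative $\partial^{\beta}m$ is $O(|\xi|^{1-|\beta|})$ because the smooth factor $i\xi_{j}/(|\xi|^{2}+1)$ vanishes at $\xi=0$, and at infinity $\partial^{\beta}m=O(|\xi|^{-1-|\beta|})$, so both $m$ and $\xi_{\ell}m$ satisfy $|\xi|^{|\beta|}|\partial^{\beta}m|\lesssim 1$; commuting $m(D)$ with $\partial^{\alpha}$ for $|\alpha|\leq k$ then gives $\|u\|_{W^{k+1,p}}\lesssim\|F\|_{W^{k,p}}$. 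The one step that deserves a little more care is the uniqueness: applying $\mathbb{P}$ directly to $\nabla\pi$ presupposes that $(-\Delta)^{-1}\Delta\pi$ is well defined, which is problematic when $\widehat{\pi}$ has mass at the origin. The cleaner route, which reaches the same conclusion, is to take the divergence of the homogeneous equation to get $\Delta\pi=0$; a harmonic tempered distribution is a polynomial, so $(-\Delta+\mathrm{Id})u=-\nabla\pi$ has Fourier transform supported at $\{0\}$, forcing $\widehat{u}$ to be supported at $\{0\}$, hence $u$ is a polynomial lying in $L^{p}$, hence $u\equiv 0$. With that adjustment the proof is complete and essentially the standard one for this constant-coefficient resolvent Stokes system.
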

		\begin{thm}\label{T1.3}
			Let $(V,G,P)$ be the weak solution of the system \eqref{PLS} established in Theorem \ref{T1.2}. Then it is smooth.
		\end{thm}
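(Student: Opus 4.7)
The plan is to establish smoothness of $(V,G,P)$ by interior elliptic bootstrap on arbitrary bounded balls $B_{R}\subset\R^{3}$. Because $U_{0}=e^{\Delta}u_{0}$ and $B_{0}=e^{\Delta}b_{0}$ are already $C^{\infty}(\R^{3})$ (heat-kernel convolutions) and the drift coefficient $x$ is smooth, every variable coefficient and inhomogeneity in the \textbf{PLS} is smooth; only the weak solution $(V,G,P)$ itself needs to be upgraded.

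First I would rewrite the first equation of \eqref{PLS}, exploiting $\mbox{div}(V+U_{0})=\mbox{div}(G+B_{0})=0$, as a Stokes system
\begin{equation*}
-\Delta V+\nabla P=\tfrac{1}{2}V+\tfrac{1}{2}x\cdot\nabla V+\mbox{div}\,H_{V},
\end{equation*}
with $H_{V}:=-(V+U_{0})\otimes(V+U_{0})+(G+B_{0})\otimes(G+B_{0})$, and analogously a system for $G$. From $(V,G)\in\mathbf{H}^{1}(\R^{3})$, the Sobolev embedding $H^{1}\hookrightarrow L^{6}$, and $U_{0},B_{0}\in L^{\infty}(\R^{3})$, we have $H_{V},H_{G}\in L^{3}_{loc}$; the drift contributes only a lower-order term bounded in $L^{2}(B_{R})$ for each $R$ since $x$ is bounded on $B_{R}$. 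Interior Stokes $L^{p}$-regularity (a local version of Lemma \ref{L0-1}, or the Cattabriga-type theory) then yields $(V,G)\in\mathbf{W}^{2,q}_{loc}(\R^{3})$ for some $q>1$.

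I would then iterate. Each bootstrap round improves the integrability, and eventually the differentiability, of $\nabla V,\nabla G$ by Sobolev embedding on the bounded domain, which in turn feeds back into better regularity of the quadratic terms in $H_{V},H_{G}$ and of the drift (with $U_{0},B_{0}$ remaining $C^{\infty}$ throughout). After finitely many iterations $(V,G)\in\mathbf{C}^{k}_{loc}(\R^{3})$ for every $k\in\mathbb{N}$, hence $(V,G)\in\mathbf{C}^{\infty}(\R^{3})$. The regularity of the pressure follows from the identity \eqref{press}, combined with the Calder\'{o}n--Zygmund estimate for the Poisson equation applied to $-\Delta P$, which makes $P$ as smooth as the right-hand side of \eqref{press}.

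The main obstacle I anticipate is closing the bootstrap through the critical dimension three --- for example, the first upgrade $W^{2,3/2}\hookrightarrow W^{1,3}$ just fails to reach $L^{\infty}$ --- together with handling the globally unbounded drift coefficient $x$ in $\tfrac{1}{2}x\cdot\nabla V$. Both difficulties disappear when one works on fixed balls $B_{R}$: on $B_{R}$ the drift is a bounded-coefficient lower-order term absorbable into the right-hand side, and each quadratic nonlinearity costs at most one integrability degree per bootstrap step, so the iteration closes in finitely many rounds and delivers global $C^{\infty}$ smoothness.
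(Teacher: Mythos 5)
Your proposal is correct and follows essentially the same localized elliptic-bootstrap strategy as the paper: rewrite \eqref{PLS} as a Stokes-type plus Poisson system with $U_{0},B_{0}\in C^{\infty}(\R^{3})$ treated as smooth data, obtain $W^{2,3/2}_{loc}$ in the first pass, and close the iteration once $W^{2,2}_{loc}\hookrightarrow L^{\infty}_{loc}$ is reached. The one step you leave as a black box is the interior regularity for the Stokes-type subsystem: because the pressure is nonlocal one cannot simply quote interior elliptic regularity, and this is precisely where the paper does its work --- it multiplies the forcing $F$ by a cutoff $\xi_{1}$, applies the \emph{whole-space} estimate of Lemma \ref{L0-1} to $\operatorname{div}(\xi_{1}F)$ to produce a global $V_{1}\in W^{2,3/2}(\R^{3})$, and then shows the remainder $W_{1}=V-V_{1}$ is smooth on the interior ball by observing that the pressure difference $P-P_{1}$ is harmonic (hence $C^{\infty}$) where the cutoff force vanishes, so that after subtracting a smooth correction $W_{1}$ satisfies a homogeneous Helmholtz equation. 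To make your sketch airtight you would either reproduce this cutoff-plus-harmonic-pressure device or cite a reference that establishes interior $L^{p}$ regularity for the Stokes system; conceptually there is no gap.
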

		\begin{proof}
		The proof is fundamental and relies on the standard bootstrapping argument, along with the linear theory of the Stokes and Poisson equations. For the convenience of the reader, we provide a detailed proof.
		
		Note that the system \eqref{PLS} can be rewritten as
		\begin{align*}
			\begin{split}
				\left.
				\begin{aligned}
						-\Delta V+V+\nabla P=&\mbox{div}\,F\\
					-\Delta G+G=&\mbox{div}\,H\\
					\mbox{div}\,V =\mbox{div}\,G =&0
				\end{aligned}\ \right\}\ \ \mbox{in}\ \  \R^{3},
			\end{split}
		\end{align*}
		where 
		$$
		F=\frac{1}{2}x\otimes V-(V+U_{0})\otimes(V+U_{0})+(G+B_{0})\otimes(G+B_{0})
		$$
		and
		$$
		H=\frac{1}{2}x\otimes G-(V+U_{0})\otimes(G+B_{0})+(G+B_{0})\otimes(V+U_{0}).
		$$
		Indeed, for any $x_{0}\in\R^{3}$ and a fixed $R>0$,  it suffices to demonstrate that $(V,G)$ is smooth in $B_{R}(x_{0})$. Without loss of generality, we may assume that $x_{0}$ is the origin.
		
		{\bf Step 1.} First, we consider
	\begin{align}\label{first}
		\begin{split}
		-\Delta V_{1}+V_{1}+\nabla P_{1}&=\mbox{div}\,F_{1},\\ \mbox{div}\,V_{1}&=0,
	\end{split}
   \end{align}
		where $F_{1}=\xi_{1}F$ and 
		$$
		\xi_{1}\in C_{0}^{\infty}(B_{2R}),\ \  \xi_{1}\equiv1\  \mbox{in}\ B_{(1+\frac{1}{2})R}.
		$$
		By the estimate \eqref{initial1} and the fact $(V,G)\in \mathbf{H}^{1}(\R^{3})$, we have
		$$
		F_{1}\in W^{1,\frac{3}{2}}(\R^{3}).
		$$
		Thus, Lemma \ref{L0-1} allows us to get a unique solution $V_{1}\in W^{2,\frac{3}{2}}(\R^{3})$ to the system \eqref{first}. If we take $W_{1}=V-V_{1}$, then it solves
		$$
		-\Delta W_{1}+W_{1}+\nabla P_{1}^{\prime}=\mbox{div}\,F_{1}^{\prime}\ \ \mbox{in}\ \R^{3},
		$$
		where $P_{1}^{\prime}=P-P_{1}$ and 
	    $$
	    F_{1}^{\prime}=0 \ \ \mbox{in} \ B_{(1+\frac{1}{2})R}.
	    $$
		Due to the fact that $P_{1}^{\prime}$ is harmonic in $B_{(1+\frac{1}{2})R}$, it follows that $P_{1}^{\prime}$ is smooth in $B_{(1+\frac{1}{2})R}$. Let $\zeta(x)\in C_{0}^{\infty}(B_{(1+\frac{1}{4})R})$ such that $\zeta(x)\equiv1$ for $x\in B_{(1+\frac{1}{2^{3}})R}$. Therefore, We have $\zeta(x)\nabla P_{1}^{\prime}\in C_{0}^{\infty}(\R^{3})$. Based on classical elliptic theory, if $\tilde{W}_{1}$ is a solution of 
		$$
		-\Delta \tilde{W}_{1}+\tilde{W}_{1}=-\zeta(x)\nabla P_{1}^{\prime},
		$$ 
		we can get $\tilde{W}_{1}\in C^{\infty}(\R^{3})\cap L^{\infty}(\R^{3})$. Setting $\tilde{V}_{1}=W_{1}-\tilde{W}_{1}$ again, we find that $\tilde{V}_{1}\in W^{1,\frac{3}{2}}(B_{(1+\frac{1}{2^{3}})R})$ is a solution of
		$$
		-\Delta\tilde{V}_{1}+\tilde{V}_{1}=0 \ \ \mbox{in} \ B_{(1+\frac{1}{2^{3}})R}.
		$$
		Based on the regularity of Possion equation, we get $\tilde{V}_{1}\in C^{\infty}(B_{(1+\frac{1}{2^{3}})R})$. Consequently, we have $W_{1}\in C^{\infty}(B_{(1+\frac{1}{2^{3}})R})$. Since $V=V_{1}+W_{1}$, we conclude that
		$$
		V\in W^{2,\frac{3}{2}}(B_{(1+\frac{1}{2^{4}})R}).
		$$
		We can also obtain that 
		$$
		G\in W^{2,\frac{3}{2}}(B_{(1+\frac{1}{2^{4}})R})
		$$
		in a similar way.
		
		{\bf Step 2.} Similarly, we consider
		\begin{align}\label{second}
			\begin{split}
				-\Delta V_{2}+V_{2}+\nabla P_{2}&=\mbox{div}\,F_{2},\\ \mbox{div}\,V_{2}&=0,
			\end{split}
		\end{align}
		where $F_{2}=\xi_{2}F$ and 
		$$
		\xi_{2}\in C_{0}^{\infty}B_{(1+\frac{1}{2^{4}})R}),\ \  \xi_{1}\equiv1\  \mbox{in}\ B_{(1+\frac{1}{2^{5}})R}.
		$$
		Since $(V,G)\in \mathbf{W}^{2,\frac{3}{2}}(B_{(1+\frac{1}{2^{4}})R})$, the Sobolev embedding theorem allows us to drive 
		$$
		F_{2}\in W^{1,p}(\R^{3}),\ \mbox{for all}\ \frac{3}{2}\leq p<3.
		$$  
		In particular, by applying Lemma \ref{L0-1}, we obtain a unique solution  $V_{2}\in W^{2,2}(\R^{3})$. Using a similar approach to Step 1 and the Sobolev embedding theorem, we derive
		$$
		V\in W^{2,2}(B_{(1+\frac{1}{2^{7}})R})\subset L^{\infty}(B_{(1+\frac{1}{2^{7}})R}).
		$$
       Similarly,
		$$
		G\in W^{2,2}(B_{(1+\frac{1}{2^{7}})R})\subset L^{\infty}(B_{(1+\frac{1}{2^{7}})R}).
		$$
		Thus,
		$$
		V\cdot\nabla V,G\cdot\nabla G,V\cdot\nabla G,G\cdot\nabla V\in W^{1,2}(B_{(1+\frac{1}{2^{7}})R}),
		$$
		and by using the argument of Step 1 again, we obtain
		$$
		(V,G)\in \mathbf{W}^{2,2}(B_{(1+\frac{1}{2^{7+3}})R}).
		$$
		After repeating this bootstrapping argument, we conclude that
		$$
		(V,G)\in \mathbf{W}^{k,2}(B_{R}), \ \mbox{for all}\ k>0,
		$$
		which implies $(V,G)$ is smooth in $\R^{3}$. Once we establish that $(V,G)$ is smooth, by \eqref{press} and classical elliptic theory, we can conclude that $P$ is smooth.
	\end{proof}
		\subsection{The decay estimate of the weak solution $(V,G)$}
	In this section, we will establish the decay estimate for the weak solution of the \textbf{PLS} \eqref{PLS}. Motivated by the work of Lai, Miao and Zheng \cite{LMZ}, we use the energy method to derive the $\mathbf{H}^{2}-$estimate of $(V,G)$ and $(|x|V,|x|G)$. Next, one can use the linear theory of the Stokes and heat equations to improve the decay rate of $(V,G)$.
	
	By Lemma \ref{inital}, if $(u_{0},b_{0})\in \mathbf{C}^{0,1}_{loc}(\R^{3}\setminus\{0\})$, we have
	\begin{align}\label{initial2}
		\begin{split}
			&|U_{0}(x)|+B_{0}(x)|\leq C(1+|x|)^{-1},\\&
			|\nabla^{k}U_{0}(x)|+|\nabla^{k}B_{0}(x)|\leq C(1+|x|)^{-2},\ \ \mbox{for all} \ k\geq1.
		\end{split}
	\end{align}
	Furthermore, if $(u_{0},b_{0})\in \mathbf{C}^{1,\alpha}_{loc}(\R^{3}\setminus\{0\})$ for any $0<\alpha\leq1$, then, for all $\beta\in(0,\alpha)$
	\begin{align}\label{initial3}
		\begin{split}
			|\nabla^{k}(-\Delta)^{\frac{\beta}{2}} U_{0}(x)|+|\nabla^{k}(-\Delta)^{\frac{\beta}{2}} B_{0}(x)|\leq C(1+|x|)^{-k-\beta-1}, \ \ k=0,1.
		\end{split}
	\end{align}
	In particular, when $\alpha=1$, it follows that
	\begin{align}\label{initial4}
		\begin{split}
			|\nabla^{k} U_{0}(x)|+|\nabla^{k}B_{0}(x)|\leq C(1+|x|)^{-k-1}, \ \ k=0,1,2.
		\end{split}
	\end{align}
	
	Now, we study the decay estimate of $(V,G)$ under the assumption \eqref{initial2}.
	\begin{thm}\label{T-3}
		Assume that $(U_{0},B_{0})$ satisfies \eqref{initial2}.
		Let $(V,G,P)$ be the weak solution of the system \eqref{PLS} as constructed in Theorem \ref{T1.2}. Then, for all $x\in\R^{3}$, there exists a constant $C=C(U_{0},B_{0})$, such that
		\begin{equation}\label{T-3-1}
			|V(x)|+|G(x)|\leq C(1+|x|)^{-1}
		\end{equation}
		and 
		\begin{equation}\label{T-3-2}
			|\nabla V(x)|+|\nabla G(x)|\leq C(1+|x|)^{-1}.
		\end{equation}
	\end{thm}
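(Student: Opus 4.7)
The overall strategy is to upgrade the $\mathbf{H}^1$ bound of Theorem \ref{T1.2} to a weighted Sobolev estimate for $(|x|V,|x|G)$ by the energy method, and then convert it to pointwise decay via Sobolev embedding. The guiding observation is that the Leray scaling term $-\tfrac{1}{2}V-\tfrac{1}{2}x\cdot\nabla V$ in \textbf{PLS} is coercive against the weighted test function: a direct computation (integrating $x\cdot\nabla V\cdot V$ by parts and using $\mathrm{div}\,V=0$) gives
\begin{equation*}
-\tfrac{1}{2}\int_{\R^3}(V+x\cdot\nabla V)\cdot V\,|x|^2\,dx=\tfrac{3}{4}\int_{\R^3}|x|^2|V|^2\,dx,
\end{equation*}
while the viscosity piece contributes $\int_{\R^3}|x|^2|\nabla V|^2\,dx$ modulo lower-order corrections absorbed by the $\mathbf{H}^1$ norm of $(V,G)$.

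Step 1 is therefore to derive $\|(|x|V,|x|G)\|_{\mathbf{H}^1(\R^3)}\leq C$ by testing each equation in \textbf{PLS} with $\chi_R^2|x|^2 V$ and $\chi_R^2|x|^2 G$ (with $\chi_R$ a smooth cutoff on $B_R$), summing and passing $R\to\infty$. The cubic term $\int(V\cdot\nabla V)\cdot V|x|^2\,dx$ reduces by integration by parts to $-\int|V|^2 V\cdot x\,dx$, which is controlled via Cauchy--Schwarz by $\||x|V\|_2\,\|V\|_4^2$ and absorbed into the coercive term using the Sobolev embedding $\mathbf{H}^1\hookrightarrow\mathbf{L}^4$. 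The Lorentz cross terms $(G\cdot\nabla G)\cdot V+(G\cdot\nabla V)\cdot G$ also combine into a single lower-order weighted quantity. The forcing pieces involving $U_0,B_0$ are handled using \eqref{initial2}: the product $U_0\cdot\nabla U_0$ decays like $(1+|x|)^{-3}$, so $\int U_0\cdot\nabla U_0\cdot V\,|x|^2\,dx$ is finite by Cauchy--Schwarz against $\||x|V\|_2$.

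Step 2 is to bootstrap to $\|(|x|V,|x|G)\|_{\mathbf{H}^2(\R^3)}\leq C$ by running the same argument on the difference quotients $D_i^h V,\ D_i^h G$ introduced in the Notation; the extra commutator forcings involving $\nabla U_0,\nabla B_0$ are handled by $|\nabla U_0|+|\nabla B_0|\leq C(1+|x|)^{-2}$. Since $H^2(\R^3)\hookrightarrow L^\infty(\R^3)$, this delivers $\||x|V\|_\infty+\||x|G\|_\infty\leq C$; combined with local boundedness from the smoothness of Theorem \ref{T1.3}, this yields \eqref{T-3-1}. Step 3 handles \eqref{T-3-2} by rewriting \textbf{PLS} as the elliptic system
\begin{equation*}
-\Delta V+V+\nabla P=\mathrm{div}\,F,\qquad -\Delta G+G=\mathrm{div}\,H
\end{equation*}
from the proof of Theorem \ref{T1.3}. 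Using \eqref{T-3-1} and \eqref{initial2}, one gets $|F|+|H|\leq C(1+|x|)^{-1}$; since the kernel of $(-\Delta+1)^{-1}\mathbb{P}$ has exponentially decaying first and second derivatives, a standard convolution estimate yields the pointwise gradient bound.

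The main technical obstacle will be the rigorous justification of testing against the unbounded weight $|x|^2$. Standard $\mathbf{H}^1$ test functions are not admissible in the weighted class, and multiplying by $\chi_R^2|x|^2$ breaks the divergence-free constraint, so the pressure contribution must be carefully carried (or else one corrects by a Bogovski\u{i} operator on an annulus). The limit $R\to\infty$ then requires that the commutators generated by $\nabla\chi_R$ vanish, which follows in a self-improving fashion: the coercive term $\tfrac{3}{4}\int|x|^2|V|^2\,dx$ absorbs cutoff errors of size $O(R^{-1})$ uniformly in $R$, using only the \emph{a priori} $\mathbf{H}^1$ bound of Theorem \ref{T1.2}.
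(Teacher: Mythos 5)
Your Steps 1 and 2 follow essentially the same strategy as the paper: test the equations against a regularized version of $|x|^2V$, $|x|^2G$ to obtain the weighted $\mathbf{H}^1$ bound, then run the same argument on difference quotients to upgrade to a weighted $\mathbf{H}^2$ bound, and finally use $H^2(\R^3)\hookrightarrow L^\infty(\R^3)$. The only difference there is the regularization: the paper takes the smooth monotone weight $h_\varepsilon(x)=|x|(1+\varepsilon|x|^2)^{-3/4}$ and sends $\varepsilon\to0^+$ by dominated convergence, which avoids the delicate bookkeeping of cutoff commutators $\nabla\chi_R$ on the annulus $\{R\le|x|\le2R\}$ that you raise at the end (those errors are not obviously $o(1)\cdot R$ from the $\mathbf{H}^1$ bound alone; the smooth weight sidesteps this entirely).

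Step 3 has a genuine gap. In the rewritten system $-\Delta V+V+\nabla P=\mathrm{div}\,F$, the tensor is
\[
F=\tfrac12\,x\otimes V-(V+U_0)\otimes(V+U_0)+(G+B_0)\otimes(G+B_0),
\]
and the term $\tfrac12\,x\otimes V$ satisfies only $|x\otimes V|\le|x|\,|V|\le C$ under \eqref{T-3-1}; it does \emph{not} decay as $(1+|x|)^{-1}$. So the claimed bound $|F|+|H|\le C(1+|x|)^{-1}$ is false: the offending $x$-weighted term does not gain at infinity. Even granting $|F|\le C$, the kernel $\nabla^2K$ of $\nabla(-\Delta+1)^{-1}\mathbb{P}\,\mathrm{div}$ is a Calder\'on--Zygmund singular kernel ($\sim|x|^{-3}$ near the origin, not in $L^1_{\rm loc}$), so a ``standard convolution estimate'' against bounded $F$ yields at best $\nabla V\in BMO$, not a pointwise $L^\infty$ bound, let alone decay. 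Attempting to keep $\operatorname{div}F$ unexpanded is also circular, since $\operatorname{div}(\tfrac12 x\otimes V)=\tfrac32V+\tfrac12 x\cdot\nabla V$ involves the very gradient you want to bound.

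The paper's resolution is to stay inside the weighted energy framework, where the $x$-factor is an asset rather than an obstruction because the scaling term $-\tfrac12 x\cdot\nabla$ remains coercive against weighted test functions (the mechanism you already exploit in Step~1). Concretely, after establishing $(V,G)\in\mathbf{H}^3(\R^3)$, one observes that $\bar V_k:=|x|\partial_k V$ and $\bar G_k:=|x|\partial_k G$ solve a linear system of the same Leray type with $L^2$ source, and Proposition~\ref{P-3-4} then yields $(|\cdot|\nabla V,|\cdot|\nabla G)\in\mathbf{H}^2(\R^3)$; the bound \eqref{T-3-2} follows again by Sobolev embedding. If you want to replace this with linear potential theory, you should keep the scaling term $-\tfrac12V-\tfrac12x\cdot\nabla V$ on the left and use the Oseen/heat kernel representation for the time-dependent lift $v(x,t)=t^{-1/2}V(x/\sqrt t)$ — this is precisely the route taken in Theorem~\ref{T1-1} to improve the decay to $(1+|x|)^{-3}$, but it already presupposes \eqref{T-3-1}--\eqref{T-3-2}.
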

	The proof of this theorem is quite complex, let us demonstrate it step by step.
	
{\bf Step 1:} $\mathbf{H}^{1}$-estimate of $(|x|V,|x|G)$. 
	\begin{prop}\label{p3-1}
		Assume that $(U_{0},B_{0})$ satisfies \eqref{initial2}.
		Let $(V,G,P)$ be the weak solution of the system \eqref{PLS} construct in Theorem \ref{T1.2}. Then we have
		\begin{equation}\label{p-3-1}
			\big\|(|\cdot|V,|\cdot|G)\big\|_{\mathbf{H}^{1}(\R^{3})}\leq C(U_{0},B_{0}).
		\end{equation}
	\end{prop}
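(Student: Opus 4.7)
The plan is to carry out a weighted cutoff energy estimate by testing the first equation of \eqref{PLS} against $\eta_R^2|x|^2 V$ and the second against $\eta_R^2|x|^2 G$, where $\eta_R \in C^\infty_c(B_{2R})$ is a standard radial cutoff with $\eta_R \equiv 1$ on $B_R$ and $|\nabla\eta_R|\leq C/R$. Since $(V,G,P)$ is smooth by Theorem \ref{T1.3}, these substitutions are admissible. Setting
\begin{equation*}
A_R := \int_{\R^3} \eta_R^2|x|^2\bigl(|V|^2 + |G|^2 + |\nabla V|^2 + |\nabla G|^2\bigr)\,dx,
\end{equation*}
the goal is to derive $A_R \leq C(U_0, B_0)(1 + \sqrt{A_R})$ uniformly in $R$, whence $\sup_R A_R < \infty$ and \eqref{p-3-1} follows by Fatou's lemma.

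The linear part supplies the coercive quantity. Integration by parts on the viscous terms yields $\int \eta_R^2|x|^2(|\nabla V|^2 + |\nabla G|^2)\,dx$ plus a commutator involving $\Delta(\eta_R^2|x|^2)$ that is bounded by $\|(V,G)\|_{\mathbf{L}^2}^2$. The drift terms $-\tfrac12 V - \tfrac12 x\cdot\nabla V$, after integration by parts exploiting $\mathrm{div}(|x|^2 x) = 5|x|^2$, produce $\tfrac34\int \eta_R^2|x|^2|V|^2\,dx$ together with an annular commutator $\tfrac12\int \eta_R(\nabla\eta_R\cdot x)|x|^2|V|^2\,dx$ of favorable sign when $\eta_R$ is radial decreasing, and the analogous manipulations for $G$ contribute symmetrically. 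Writing $u = V+U_0$, $b = G+B_0$ (both divergence-free) and exploiting the MHD algebraic cancellation, the nonlinear contribution collapses into a cubic trilinear form; splitting each factor into its $(V,G)$ part and its $(U_0,B_0)$ part, all terms carrying at least one $(U_0,B_0)$ factor are controlled uniformly in $R$ via the pointwise decay \eqref{initial2} together with $\|(V,G)\|_{\mathbf{H}^1}\leq C$ from Theorem \ref{T1.2}. The pure $(V,G)$ cubic is handled via $\mathbf{H}^1 \hookrightarrow \mathbf{L}^4$ and H\"older,
\begin{equation*}
\int \eta_R^2 |x|\bigl(|V|^3 + |G|^3\bigr)\,dx \leq C\,\sqrt{A_R}\,\|(V,G)\|_{\mathbf{H}^1}^2,
\end{equation*}
which is absorbable by Young's inequality.

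The main obstacle is the pressure term, since $\eta_R^2|x|^2 V$ is not divergence-free. A direct computation gives
\begin{equation*}
-\int P\,\mathrm{div}(\eta_R^2|x|^2 V)\,dx \;=\; -2\int P\,\eta_R^2\,(x\cdot V)\,dx - 2\int P\,\eta_R(\nabla\eta_R\cdot V)\,|x|^2\,dx;
\end{equation*}
the interior piece is bounded by Cauchy--Schwarz as $\|P\|_{L^2}\|\eta_R|x|V\|_{L^2}\leq C\|P\|_{L^2}\sqrt{A_R}$. The commutator term is the key technical point: here the elementary observation $|\nabla\eta_R|\,|x|\leq C$ on $\mathrm{supp}\,\nabla\eta_R$ lets us bound $|\eta_R\nabla\eta_R|\,|x|^2 \leq C\,\eta_R|x|$, so the commutator too is controlled by $C\|P\|_{L^2}\sqrt{A_R}$, with $\|P\|_{L^2}\leq C(U_0,B_0)$ from Theorem \ref{T1.2}. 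Collecting every contribution yields $A_R \leq C(U_0,B_0)\bigl(1 + \sqrt{A_R}\bigr)$ uniformly in $R$; Young's inequality then produces a uniform bound $A_R \leq C$, and passing $R\to\infty$ with Fatou's lemma delivers \eqref{p-3-1}.
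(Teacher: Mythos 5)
The overall strategy (weighted energy estimate testing against $w(x)V$, $w(x)G$, then passing to a limit) is the right one and broadly parallels the paper, which uses $w = h_\varepsilon^2$ with $h_\varepsilon(x) = |x|(1+\varepsilon|x|^2)^{-3/4}$ and lets $\varepsilon\to 0^+$. However, your specific choice $w = \eta_R^2|x|^2$ with $\eta_R$ a \emph{compactly supported decreasing} cutoff has a genuine gap at the drift commutator, and the "favorable sign" observation does not repair it.

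Carrying out the integration by parts in $-\tfrac12\int x\cdot\nabla V\cdot\varphi - \tfrac12\int V\cdot\varphi$ with $\varphi = \eta_R^2|x|^2 V$ gives
\begin{equation*}
\frac34\int\eta_R^2|x|^2|V|^2\,dx \;+\; \frac12\int\eta_R(\nabla\eta_R\cdot x)|x|^2|V|^2\,dx,
\end{equation*}
and these two terms sit on the \emph{left} side of the energy identity. For a radial decreasing $\eta_R$ the commutator is $\leq 0$, so when the identity is rearranged into "coercive $=$ RHS", the commutator reappears on the right as $+\tfrac12\int\eta_R|\nabla\eta_R\cdot x|\,|x|^2|V|^2$, which must then be \emph{bounded}, not dropped. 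But the only available pointwise control is $\eta_R|\nabla\eta_R\cdot x|\leq C$ on the support of $\nabla\eta_R$; this yields $|{\rm comm}|\lesssim\int_{R<|x|<2R}|x|^2|V|^2\lesssim R^2\int_{R<|x|<2R}|V|^2$, which cannot be absorbed into $A_R$ (the ratio $|\nabla\eta_R\cdot x|/\eta_R$ blows up near $|x| = 2R$ for any compactly supported cutoff) nor bounded by $\sqrt{A_R}$ (that requires pairing the extra $|x|$ with $\eta_R|x|V$, but there is no spare $\eta_R$ to give). Without already knowing $|x|V\in L^2$, the quantity $R^2\int_{\text{annulus}}|V|^2$ need not stay bounded as $R\to\infty$, so the estimate $A_R\leq C(1+\sqrt{A_R})$ does not close. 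Note that the $\Delta(\eta_R^2|x|^2)$ and pressure commutators are genuinely harmless (they carry only one power of $|x|$ against $\nabla\eta_R$, not two), so the obstruction is specific to the drift term, where $\nabla w\cdot x$ picks up the extra factor of $|x|$.

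The paper's weight $h_\varepsilon$ is designed precisely to evade this: one has $\nabla h_\varepsilon\cdot x = h_\varepsilon\bigl(1-\tfrac32\tfrac{\varepsilon|x|^2}{1+\varepsilon|x|^2}\bigr)\geq -\tfrac12 h_\varepsilon$, so $\mathrm{div}(h_\varepsilon^2 x)\geq 2h_\varepsilon^2>0$ and the analogous drift commutator enters with a \emph{nonnegative} coefficient $\tfrac34\eta_\varepsilon^2$, becoming part of the coercive term rather than a nuisance. A compactly supported cutoff can never mimic this because its logarithmic derivative is necessarily unbounded. If you want to keep the cutoff philosophy, replace $\eta_R^2|x|^2$ by a smoothed \emph{increasing} truncation $\theta_R^2$ with $\theta_R(x)=|x|$ for $|x|\leq R$ and $\theta_R$ constant for $|x|\geq 3R$, $\theta_R' \geq 0$; then $\mathrm{div}(\theta_R^2 x)\geq 3\theta_R^2$, the drift commutator is nonnegative, and the rest of your argument goes through.
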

	\begin{proof}	
		For any $\varepsilon>0$,
		taking $\varphi(x)=h^{2}_{\varepsilon}(x)V(x)$ and $\psi(x)=h^{2}_{\varepsilon}(x)G(x)$ in the weak formulation \eqref{W1}-\eqref{W2},
		where 
		$$
		h_{\varepsilon}(x)=\frac{|x|}{(1+\varepsilon|x|^{2})^{\frac{3}{4}}}.
		$$
		It is easy to verify that the test function $(\varphi,\psi)$ is valid.
		Denoting $V_{\varepsilon}(x)=h_{\varepsilon}V(x)$ and $G_{\varepsilon}(x)=h_{\varepsilon}G(x)$, we get
		\begin{align}\label{P3-1}
			\begin{split}
				&\int_{\R^{3}}\nabla V:\nabla(h_{\varepsilon}V_{\varepsilon})-\frac{1}{2}\int_{\R^{3}}V_{\varepsilon}^{2} dx-\frac{1}{2}\int_{\R^{3}}x\cdot\nabla V\cdot(h_{\varepsilon}V_{\varepsilon}) dx\\=&\int_{\R^{3}}P\mathrm{div}(h_{\varepsilon}V_{\varepsilon}) dx-\int_{\R^{3}}(V+U_{0})\cdot\nabla(V+U_{0})\cdot(h_{\varepsilon}V_{\varepsilon}) dx\\&+\int_{\R^{3}}(G+B_{0})\cdot\nabla(G+B_{0})\cdot(h_{\varepsilon}V_{\varepsilon}) dx
			\end{split}
		\end{align}
		and
		\begin{align}\label{P3-2}
			\begin{split}
				&\int_{\R^{3}}\nabla G:\nabla(h_{\varepsilon}G_{\varepsilon}) dx-\frac{1}{2}\int_{\R^{3}}G_{\varepsilon}^{2} dx-\frac{1}{2}\int_{\R^{3}}x\cdot\nabla G\cdot(h_{\varepsilon}G_{\varepsilon}) dx\\=&-\int_{\R^{3}}(V+U_{0})\cdot\nabla(G+B_{0})\cdot(h_{\varepsilon}G_{\varepsilon}) dx\\&+\int_{\R^{3}}(G+B_{0})\cdot\nabla(V+U_{0})\cdot(h_{\varepsilon}G_{\varepsilon}) dx.
			\end{split}
		\end{align}
		Some straightforward calculations give us
		\begin{align}\label{P3-3}
			\begin{split}
				&\int_{\R^{3}}\nabla V:\nabla(h_{\varepsilon}V_{\varepsilon})dx+	\int_{\R^{3}}\nabla G:\nabla(h_{\varepsilon}G_{\varepsilon}) dx\\=&\|\nabla V_{\varepsilon}\|_{2}^{2}+\|\nabla G_{\varepsilon}\|_{2}^{2}+\int_{\R^{3}}\nabla V:(\nabla h_{\varepsilon}\otimes V_{\varepsilon})dx-\int_{\R^{3}}(\nabla h_{\varepsilon}\otimes V):\nabla V_{\varepsilon}dx\\&+\int_{\R^{3}}\nabla G:(\nabla h_{\varepsilon}\otimes G_{\varepsilon})dx-\int_{\R^{3}}(\nabla h_{\varepsilon}\otimes G):\nabla G_{\varepsilon}dx.
			\end{split}
		\end{align}
		Notice that
		$$
		\nabla h_{\varepsilon}=\frac{x}{|x|(1+\varepsilon|x|^{2})^{\frac{3}{4}}}-\frac{3}{2}\frac{\varepsilon|x|x}{(1+\varepsilon|x|^{2})^{\frac{7}{4}}}.
		$$
		Thus, write $\eta_{\varepsilon}=(1+\varepsilon|x|^{2})^{-\frac{1}{2}}$, we can easily derive
		\begin{align}\label{P3-4}
			\begin{split}
				&-\frac{1}{2}\int_{\R^{3}}x\cdot\nabla V\cdot(h_{\varepsilon}V_{\varepsilon}) dx-\frac{1}{2}\int_{\R^{3}}x\cdot\nabla G\cdot(h_{\varepsilon}G_{\varepsilon}) dx\\=&-\frac{1}{2}\int_{\R^{3}}x\cdot\nabla V_{\varepsilon}\cdot V_{\varepsilon} dx+\frac{1}{2}\int_{\R^{3}}x\cdot(\nabla h_{\varepsilon}\otimes V)\cdot V_{\varepsilon} dx\\&-\frac{1}{2}\int_{\R^{3}}x\cdot\nabla G_{\varepsilon}\cdot G_{\varepsilon}dx+\frac{1}{2}\int_{\R^{3}}x\cdot(\nabla h_{\varepsilon}\otimes G)\cdot G_{\varepsilon} dx\\=&\frac{3}{4}\int_{\R^{3}}V_{\varepsilon}^{2}dx+\frac{1}{2}\int_{\R^{3}}V_{\varepsilon}^{2}dx-\frac{3}{4}\int_{\R^{3}}\frac{\varepsilon|x|^{2}}{1+\varepsilon|x|^{2}}V_{\varepsilon}^{2}dx\\&+\frac{3}{4}\int_{\R^{3}}G_{\varepsilon}^{2}dx+\frac{1}{2}\int_{\R^{3}}G_{\varepsilon}^{2}dx-\frac{3}{4}\int_{\R^{3}}\frac{\varepsilon|x|^{2}}{1+\varepsilon|x|^{2}}G_{\varepsilon}^{2}dx\\=&\frac{1}{2}\int_{\R^{3}}V_{\varepsilon}^{2}dx+\frac{3}{4}\int_{\R^{3}}\eta_{\varepsilon}^{2}V_{\varepsilon}^{2}dx+\frac{1}{2}\int_{\R^{3}}G_{\varepsilon}^{2}dx+\frac{3}{4}\int_{\R^{3}}\eta_{\varepsilon}^{2}G_{\varepsilon}^{2}dx.
			\end{split}
		\end{align}
		Since $\mathrm{div} V=\mathrm{div} G=0$, we have
		\begin{align}\label{P3-5}
			\begin{split}
				&-\int_{\R^{3}}V\cdot\nabla V\cdot(h_{\varepsilon}V_{\varepsilon})dx-\int_{\R^{3}}V\cdot\nabla G\cdot(h_{\varepsilon}G_{\varepsilon})dx\\=&\int_{\R^{3}}V\cdot\nabla (h_{\varepsilon}V_{\varepsilon})\cdot Vdx+\int_{\R^{3}}V\cdot\nabla(h_{\varepsilon}G_{\varepsilon}) \cdot Gdx\\=&\int_{\R^{3}}V\cdot\nabla V_{\varepsilon}\cdot V_{\varepsilon}dx+\int_{\R^{3}}V\cdot(\nabla h_{\varepsilon}\otimes V_{\varepsilon})\cdot Vdx\\&+\int_{\R^{3}}V\cdot\nabla G_{\varepsilon}\cdot G_{\varepsilon}dx+\int_{\R^{3}}V\cdot(\nabla h_{\varepsilon}\otimes G_{\varepsilon})\cdot Gdx\\=&\int_{\R^{3}}V\cdot(\nabla h_{\varepsilon}\otimes V_{\varepsilon})\cdot Vdx+\int_{\R^{3}}V\cdot(\nabla h_{\varepsilon}\otimes G_{\varepsilon})\cdot Gdx
			\end{split}
		\end{align}
		and
		\begin{align}\label{P3-6}
			\begin{split}
				&\int_{\R^{3}}G\cdot\nabla G\cdot(h_{\varepsilon}V_{\varepsilon})dx+\int_{\R^{3}}G\cdot\nabla V\cdot(h_{\varepsilon}G_{\varepsilon})dx\\=&-\int_{\R^{3}}G\cdot\nabla (h_{\varepsilon}V_{\varepsilon})\cdot Gdx-\int_{\R^{3}}G\cdot\nabla(h_{\varepsilon}G_{\varepsilon}) \cdot Vdx\\=&-\int_{\R^{3}}G\cdot\nabla V_{\varepsilon}\cdot G_{\varepsilon}dx-\int_{\R^{3}}G\cdot(\nabla h_{\varepsilon}\otimes V_{\varepsilon})\cdot Gdx\\&-\int_{\R^{3}}G\cdot\nabla G_{\varepsilon}\cdot V_{\varepsilon}dx-\int_{\R^{3}}G\cdot(\nabla h_{\varepsilon}\otimes G_{\varepsilon})\cdot Vdx\\=&-\int_{\R^{3}}G\cdot(\nabla h_{\varepsilon}\otimes V_{\varepsilon})\cdot Gdx-\int_{\R^{3}}G\cdot(\nabla h_{\varepsilon}\otimes G_{\varepsilon})\cdot Vdx.
			\end{split}
		\end{align}
		Adding equations \eqref{P3-1} and \eqref{P3-2}, then combined with \eqref{P3-3}-\eqref{P3-6} to get
		\begin{align}\label{P3-7}
			\begin{split}
				\big\|\big(\nabla V_{\varepsilon},\nabla G_{\varepsilon}\big)\big\|_{2}^{2}+\frac{3}{4}\big\|\big(\eta_{\varepsilon}V_{\varepsilon},\eta_{\varepsilon}G_{\varepsilon}\big)\big\|_{2}^{2}=\sum_{i=1}^{9}I_{i},
			\end{split}
		\end{align}
		where
		\begin{align*}
			\begin{split}
				&I_{1}:=\int_{\R^{3}}P\mathrm{div}(h_{\varepsilon}V_{\varepsilon}) dx,\\&
				I_{2}:=-\int_{\R^{3}}\nabla V:(\nabla h_{\varepsilon}\otimes V_{\varepsilon})dx-\int_{\R^{3}}\nabla G:(\nabla h_{\varepsilon}\otimes G_{\varepsilon})dx,\\&
				I_{3}:=\int_{\R^{3}}(\nabla h_{\varepsilon}\otimes V):\nabla V_{\varepsilon}dx+\int_{\R^{3}}(\nabla h_{\varepsilon}\otimes G):\nabla G_{\varepsilon}dx,\\&
				I_{4}:=\int_{\R^{3}}V\cdot(\nabla h_{\varepsilon}\otimes V_{\varepsilon})\cdot Vdx+\int_{\R^{3}}V\cdot(\nabla h_{\varepsilon}\otimes G_{\varepsilon})\cdot Gdx,\\&
				I_{5}:=-\int_{\R^{3}}G\cdot(\nabla h_{\varepsilon}\otimes V_{\varepsilon})\cdot Gdx-\int_{\R^{3}}G\cdot(\nabla h_{\varepsilon}\otimes G_{\varepsilon})\cdot Vdx,\\&
				I_{6}:=-\int_{\R^{3}}V\cdot\nabla U_{0}\cdot(h_{\varepsilon}V_{\varepsilon}) dx-\int_{\R^{3}}V\cdot\nabla B_{0}\cdot(h_{\varepsilon}G_{\varepsilon}) dx,\\&
				I_{7}:=-\int_{\R^{3}}U_{0}\cdot\nabla(V+U_{0})\cdot(h_{\varepsilon}V_{\varepsilon}) dx-\int_{\R^{3}}U_{0}\cdot\nabla(G+B_{0})\cdot(h_{\varepsilon}G_{\varepsilon}) dx,\\&
				I_{8}:=\int_{\R^{3}}G\cdot\nabla B_{0}\cdot(h_{\varepsilon}V_{\varepsilon}) dx+\int_{\R^{3}}G\cdot\nabla U_{0}\cdot(h_{\varepsilon}G_{\varepsilon}) dx,\\&I_{9}:=\int_{\R^{3}}B_{0}\cdot\nabla(G+B_{0})\cdot(h_{\varepsilon}V_{\varepsilon}) dx+\int_{\R^{3}}B_{0}\cdot\nabla(V+U_{0})\cdot(h_{\varepsilon}G_{\varepsilon}) dx.
			\end{split}
		\end{align*}
		Using the fact that $P\in L^{2}(\R^{3})$ and $\mathrm{div} V=0$, we can apply H\"{o}lder inequality and Young's inequality to obtain
		\begin{align*}
			\begin{split}
				I_{1}&=2\int_{\R^{3}}P\nabla h_{\varepsilon}\cdot V_{\varepsilon}dx\\&=2\int_{\R^{3}}P\frac{x}{|x|(1+\varepsilon|x|^{2})^{\frac{3}{4}}} V_{\varepsilon}dx-3\int_{\R^{3}}P\frac{\varepsilon|x|x}{(1+\varepsilon|x|^{2})^{\frac{7}{4}}} V_{\varepsilon}dx\\&\leq C\|P\|_{2}\|\eta_{\varepsilon}V_{\varepsilon}\|_{2}\\&\leq C\|P\|_{2}^{2}+\frac{1}{64}\|\eta_{\varepsilon}V_{\varepsilon}\|_{2}^{2}.
			\end{split}
		\end{align*}
		By the H\"{o}lder inequality and Young's inequality again
		\begin{align*}
			\begin{split}
				I_{2}=&-\int_{\R^{3}}\nabla V:\bigg(\frac{x}{|x|(1+\varepsilon|x|^{2})^{\frac{3}{4}}}\otimes V_{\varepsilon}\bigg)dx+\frac{3}{2}\int_{\R^{3}}\nabla V:\bigg(\frac{\varepsilon|x|x}{(1+\varepsilon|x|^{2})^{\frac{7}{4}}}\otimes V_{\varepsilon}\bigg)dx\\&-\int_{\R^{3}}\nabla G:\bigg(\frac{x}{|x|(1+\varepsilon|x|^{2})^{\frac{3}{4}}}\otimes G_{\varepsilon}\bigg)dx+\frac{3}{2}\int_{\R^{3}}\nabla G:\bigg(\frac{\varepsilon|x|x}{(1+\varepsilon|x|^{2})^{\frac{7}{4}}}\otimes G_{\varepsilon}\bigg)dx\\ \leq&\big\|\nabla V\big\|_{2}\big\|\eta_{\varepsilon}V_{\varepsilon}\big\|_{2}+\frac{3}{2}\big\|\nabla V\big\|_{2}\big\|\eta_{\varepsilon}V_{\varepsilon}\big\|_{2}+\big\|\nabla G\big\|_{2}\big\|\eta_{\varepsilon}G_{\varepsilon}\big\|_{2}+\frac{3}{2}\big\|\nabla G\big\|_{2}\big\|\eta_{\varepsilon}G_{\varepsilon}\big\|_{2}\\ \leq& C\big\|\big(\nabla V,\nabla G\big)\big\|_{2}^{2}+\frac{1}{64}\big\|\big(\eta_{\varepsilon}V_{\varepsilon},\eta_{\varepsilon}G_{\varepsilon}\big)\big\|_{2}^{2}.
			\end{split}
		\end{align*}
		Similarly,
		\begin{align*}
			\begin{split}
				I_{3}=&\int_{\R^{3}}\bigg(\frac{x}{|x|(1+\varepsilon|x|^{2})^{\frac{3}{4}}}\otimes V\bigg):\nabla V_{\varepsilon}dx-\frac{3}{2}\int_{\R^{3}}\bigg(\frac{\varepsilon|x|x}{(1+\varepsilon|x|^{2})^{\frac{7}{4}}}\otimes V\bigg):\nabla V_{\varepsilon}dx\\&+\int_{\R^{3}}\bigg(\frac{x}{|x|(1+\varepsilon|x|^{2})^{\frac{3}{4}}}\otimes G\bigg):\nabla G_{\varepsilon}dx-\frac{3}{2}\int_{\R^{3}}\bigg(\frac{\varepsilon|x|x}{(1+\varepsilon|x|^{2})^{\frac{7}{4}}}\otimes G\bigg):\nabla G_{\varepsilon}dx\\ \leq&\big\| V\big\|_{2}\big\|\nabla V_{\varepsilon}\big\|_{2}+\frac{3}{2}\big\| V\big\|_{2}\big\|\nabla V_{\varepsilon}\big\|_{2}+\big\| G\big\|_{2}\big\|\nabla G_{\varepsilon}\big\|_{2}+\frac{3}{2}\big\| G\big\|_{2}\big\|\nabla G_{\varepsilon}\big\|_{2}\\ \leq& C\big\|\big(V, G\big)\big\|_{2}^{2}+\frac{1}{64}\big\|\big(\nabla V_{\varepsilon},\nabla G_{\varepsilon}\big)\big\|_{2}^{2}.
			\end{split}
		\end{align*}
		By the embedding theorem, one obtains
		\begin{align*}
			\begin{split}
				I_{4}=&\int_{\R^{3}}V\cdot\bigg(\frac{x}{|x|(1+\varepsilon|x|^{2})^{\frac{3}{4}}}\otimes V_{\varepsilon}\bigg)\cdot Vdx-\frac{3}{2}\int_{\R^{3}}V\cdot\bigg(\frac{\varepsilon|x|x}{(1+\varepsilon|x|^{2})^{\frac{7}{4}}}\otimes V_{\varepsilon}\bigg)\cdot Vdx\\&+\int_{\R^{3}}V\cdot\bigg(\frac{x}{|x|(1+\varepsilon|x|^{2})^{\frac{3}{4}}}\otimes G_{\varepsilon}\bigg)\cdot Gdx-\frac{3}{2}\int_{\R^{3}}V\cdot\bigg(\frac{\varepsilon|x|x}{(1+\varepsilon|x|^{2})^{\frac{7}{4}}}\otimes G_{\varepsilon}\bigg)\cdot Gdx\\ \leq&\big\| V\big\|_{\frac{12}{5}}^{2}\big\| V_{\varepsilon}\big\|_{6}+\frac{3}{2}\big\| V\big\|_{\frac{12}{5}}^{2}\big\| V_{\varepsilon}\big\|_{6}+\big\| V\big\|_{\frac{12}{5}}\big\| G\big\|_{\frac{12}{5}}\big\| G_{\varepsilon}\big\|_{6}+\frac{3}{2}\big\| V\big\|_{\frac{12}{5}}\big\| G\big\|_{\frac{12}{5}}\big\| G_{\varepsilon}\big\|_{6}\\ \leq& C\big\| V\big\|_{\frac{12}{5}}^{4}+C\big\|V\big\|_{\frac{12}{5}}^{2}\big\| G\big\|_{\frac{12}{5}}^{2}+\frac{1}{64}\big\|\big(\nabla V_{\varepsilon},\nabla G_{\varepsilon}\big)\big\|_{2}^{2}.
			\end{split}
		\end{align*}
		Thanks to the decay estimate of $\nabla U_{0}$,
		\begin{align*}
			\begin{split}
				I_{6}&=-\int_{\R^{3}}\frac{|x|}{(1+\varepsilon|x|^{2})^{\frac{3}{4}}}V\cdot\nabla U_{0}\cdot V_{\varepsilon}dx-\int_{\R^{3}}\frac{|x|}{(1+\varepsilon|x|^{2})^{\frac{3}{4}}}V\cdot\nabla B_{0}\cdot G_{\varepsilon}dx\\&\leq \big\||\cdot|\nabla U_{0}\big\|_{\infty}\big\|V\big\|_{2}\big\|\eta_{\varepsilon}V_{\varepsilon}\big\|_{2}+\big\||\cdot|\nabla B_{0}\big\|_{\infty}\big\|V\big\|_{2}\big\|\eta_{\varepsilon}G_{\varepsilon}\big\|_{2}\\&\leq C\big\|V\big\|_{2}^{2}+\frac{1}{64}\big\|\big(\eta_{\varepsilon}V_{\varepsilon},\eta_{\varepsilon}G_{\varepsilon}\big)\big\|_{2}^{2}.
			\end{split}
		\end{align*}
		Notice that $\nabla U_{0},\nabla B_{0}\in L^{2}(\R^{3})$, so we get
		\begin{align*}
			\begin{split}
				I_{7}&\leq \big\||\cdot| U_{0}\big\|_{\infty}\Big(\big\|\nabla V\big\|_{2}+\big\|\nabla U_{0}\big\|_{2}\Big)\big\|\eta_{\varepsilon}V_{\varepsilon}\big\|_{2}+\big\||\cdot| U_{0}\big\|_{\infty}\Big(\big\|\nabla G\big\|_{2}+\big\|\nabla B_{0}\big\|_{2}\Big)\big\|\eta_{\varepsilon}G_{\varepsilon}\big\|_{2}\\&\leq C\big\||\cdot| U_{0}\big\|^{2}_{\infty}\Big(\big\|\big(\nabla V,\nabla G\big)\big\|_{2}^{2}+\big\|\big(\nabla U_{0},\nabla B_{0}\big)\big\|_{2}^{2}\Big)+\frac{1}{64}\big\|\big(\eta_{\varepsilon}V_{\varepsilon},\eta_{\varepsilon}G_{\varepsilon}\big)\big\|_{2}^{2}.
			\end{split}
		\end{align*}
		By the same calculation as $I_{4},I_{6}$ and $I_{7}$, we can immediately obtain
		\begin{align*}
			\begin{split}
				&I_{5}\leq C\big\| G\big\|_{\frac{12}{5}}^{4}+C\big\|V\big\|_{\frac{12}{5}}^{2}\big\| G\big\|_{\frac{12}{5}}^{2}+\frac{1}{64}\big\|\big(\nabla V_{\varepsilon},\nabla G_{\varepsilon}\big)\big\|_{2}^{2},\\&
				I_{8}\leq C\big\|G\big\|_{2}^{2}+\frac{1}{64}\big\|\big(\eta_{\varepsilon}V_{\varepsilon},\eta_{\varepsilon}G_{\varepsilon}\big)\big\|_{2}^{2}
			\end{split}
		\end{align*}
		and
		\begin{align*}
			\begin{split}
				I_{9}\leq C\big\||\cdot| B_{0}\big\|^{2}_{\infty}\Big(\big\|\big(\nabla V,\nabla G\big)\big\|_{2}^{2}+\big\|\big(\nabla U_{0},\nabla B_{0}\big)\big\|_{2}^{2}\Big)+\frac{1}{64}\big\|\big(\eta_{\varepsilon}V_{\varepsilon},\eta_{\varepsilon}G_{\varepsilon}\big)\big\|_{2}^{2}.
			\end{split}
		\end{align*}
		Collecting the estimate $I_{1}-I_{9}$ to \eqref{P3-7}, we have
		\begin{align}\label{P3-16}
			\begin{split}
				\big\|\big(\nabla V_{\varepsilon},\nabla G_{\varepsilon}\big)\big\|_{2}^{2}+\frac{3}{4}\big\|\big(\eta_{\varepsilon}V_{\varepsilon},\eta_{\varepsilon}G_{\varepsilon}\big)\big\|_{2}^{2}\leq C_{\ast},
			\end{split}
		\end{align}
		where the constant $C_{\ast}$ relay on $U_{0},B_{0}$ and the $H^{1}(\R^{3})$ bounded of $(V,G)$. Then, by the Dominated convergence theorem, letting $\varepsilon\to0^{+}$ in \eqref{P3-16}, we can get the desired estimate \eqref{p-3-1}.
	\end{proof}
	{\bf Step 2:} $\mathbf{H}^{2}$-estimate of $(|x|V,|x|G)$.
	\begin{lem}\label{L3-1}
		Assume that $(U_{0},B_{0})$ satisfies \eqref{initial2}. Let $(V,G,P)$ be the weak solution of the system \eqref{PLS} constructed in Theorem \ref{T1.2}. Then, we have $(V,G)\in \mathbf{H}^{2}(\R^{3})$ and 
		\begin{equation}\label{L-3-1}
			\big\|(V,G)\big\|_{\mathbf{H}^{2}(\R^{3})}\leq C(U_{0},B_{0}).
		\end{equation}
		Furthermore,
		$$
		|\cdot|P\in D^{1,2}(\R^{3}).
		$$
		
	\end{lem}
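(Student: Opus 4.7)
The plan is to first establish the $\mathbf{H}^2$ bound for $(V,G)$ by an energy estimate against $(-\Delta V, -\Delta G)$, then derive $|\cdot|P\in D^{1,2}(\R^3)$ via the Riesz-transform representation of $P$ combined with the Muckenhoupt $A_2$-weighted theory.

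For the $\mathbf{H}^2$ bound, I would take the inner product of the first equation of \eqref{PLS} with $-\Delta V$ and of the second with $-\Delta G$, then integrate and add. Because $\mathrm{div}\,V=\mathrm{div}\,G=0$, also $\Delta V$ and $\Delta G$ are divergence-free, so the pressure term $\int\nabla P\cdot(-\Delta V)\,dx$ vanishes after integrating by parts. The linear parts produce $\|\Delta V\|_2^2+\|\Delta G\|_2^2$ together with controlled lower-order contributions; the rotational term is handled by the identity
\[
\int_{\R^3}(x\cdot\nabla V)\cdot\Delta V\,dx=\tfrac{1}{2}\|\nabla V\|_2^2,
\]
obtained from two successive integrations by parts. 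Hence the full linear contribution collapses to a multiple of $\|\nabla V\|_2^2+\|\nabla G\|_2^2$, already bounded by Theorem \ref{T1.2}. The nonlinear terms are treated by Gagliardo--Nirenberg: for a typical quadratic term,
\[
\|V\cdot\nabla V\|_2\leq \|V\|_6\,\|\nabla V\|_3\leq C\|\nabla V\|_2^{3/2}\,\|\Delta V\|_2^{1/2},
\]
so $|\int(V\cdot\nabla V)\cdot(-\Delta V)|\leq \varepsilon\|\Delta V\|_2^2+C(\varepsilon)\|\nabla V\|_2^6$, and the MHD cross terms $V\cdot\nabla G$, $G\cdot\nabla V$, $G\cdot\nabla G$ are bounded identically. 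Terms involving $(U_0,B_0)$ are absorbed using \eqref{initial2}: for instance $|\int V\cdot\nabla U_0\cdot(-\Delta V)|\leq \|V\|_6\|\nabla U_0\|_3\|\Delta V\|_2$ (with $\nabla U_0\in L^3$ since $|\nabla U_0|\lesssim(1+|x|)^{-2}$), again absorbable via Young's inequality. Summing the two tested equations and absorbing the small $\|\Delta V\|_2^2,\|\Delta G\|_2^2$ contributions yields \eqref{L-3-1}.

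For the weighted pressure estimate, taking the divergence of the first equation of \eqref{PLS} gives $-\Delta P=\mathrm{div}\,\mathrm{div}\,M$ with
\[
M=(V+U_0)\otimes(V+U_0)-(G+B_0)\otimes(G+B_0),
\]
so $P=-R_iR_jM_{ij}$ and consequently $\partial_k P=-R_iR_j\,\partial_k M_{ij}$. Because $|x|^2$ lies in the Muckenhoupt class $A_2(\R^3)$ (as $-3<2<3$), the Riesz transforms extend boundedly to $L^2(\R^3;|x|^2\,dx)$, yielding
\[
\big\||\cdot|\nabla P\big\|_2\leq C\big\||\cdot|\nabla M\big\|_2.
\]
I would then bound $\||\cdot|\nabla M\|_2$ term by term using the $\mathbf{H}^2$ estimate just established and the weighted bound of Proposition \ref{p3-1}: for example $\||\cdot|V\nabla V\|_2\leq \||\cdot|V\|_6\|\nabla V\|_3\leq C$ (with $\|\nabla V\|_3\leq C\|\nabla V\|_2^{1/2}\|\nabla^2 V\|_2^{1/2}$); the mixed pieces such as $\||\cdot|V\nabla U_0\|_2\leq \||\cdot|\nabla U_0\|_\infty\|V\|_2\leq C$, since $|x||\nabla U_0|\leq C|x|(1+|x|)^{-2}\in L^\infty$; and the purely background term $\||\cdot|U_0\nabla U_0\|_2<\infty$ since $|x||U_0||\nabla U_0|\leq C|x|(1+|x|)^{-3}\in L^2(\R^3)$. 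The $G,B_0$ contributions are handled identically. Combining with $\frac{x}{|x|}P\in L^2$ (from $P\in L^2$), one obtains $\nabla(|\cdot|P)=\frac{x}{|x|}P+|\cdot|\nabla P\in L^2$, i.e.\ $|\cdot|P\in D^{1,2}(\R^3)$.

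The main technical obstacle is the bookkeeping for the many MHD cross terms in the $\mathbf{H}^2$ step and verifying each of the weighted $L^2$ bounds on the pieces of $|\cdot|\nabla M$; both are overcome by combining standard Gagliardo--Nirenberg interpolation with the weighted $\mathbf{H}^1$ bound of $(|x|V,|x|G)$ supplied by Proposition \ref{p3-1}.
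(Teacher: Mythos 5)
Your overall strategy is sound, but there is a circularity in the $\mathbf{H}^2$ step that needs to be addressed. You propose to test the first equation of \eqref{PLS} against $-\Delta V$ (and similarly for $G$). However, Definition \ref{D-0} only allows test functions in $\mathbf{H}^1(\R^3)$ with the weight condition $\|\,|\cdot|\varphi\,\|_2<\infty$, and a priori $V$ is only in $\mathbf{H}^1$, so $-\Delta V$ is not an admissible test function: all the integrations by parts you perform (the identity $\int (x\cdot\nabla V)\cdot\Delta V\,dx=\tfrac12\|\nabla V\|_2^2$, the vanishing of the pressure term, the Gagliardo--Nirenberg bounds involving $\|\nabla V\|_3\lesssim\|\nabla V\|_2^{1/2}\|\nabla^2V\|_2^{1/2}$) presuppose the very $\mathbf{H}^2$ regularity you are trying to establish. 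This is why the paper runs the identical computation with the difference-quotient test functions $\varphi=-D_k^{-h}D_k^hV$, $\phi=-D_k^{-h}D_k^hG$, which are admissible given only $V,G\in \mathbf{H}^1$ together with the weighted $\mathbf{H}^1$ bound of Proposition \ref{p3-1}, and then lets $h\to0$. Your formal computation is the correct skeleton and the resulting estimates are the right ones, but as written it is not a proof; the difference-quotient discretization (or some equivalent approximation scheme) is an essential ingredient, not an optional technicality.

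For the weighted pressure estimate $|\cdot|P\in D^{1,2}(\R^3)$, your route is genuinely different from the paper's and both are correct. You use the Riesz-transform representation $\partial_kP=R_iR_j\partial_kM_{ij}$ together with the Muckenhoupt $A_2$-weighted boundedness of $R_iR_j$ (valid since $|x|^2\in A_2(\R^3)$ because $-3<2<3$), reducing everything to bounding $\|\,|\cdot|\nabla M\,\|_2$ directly; the paper instead writes $-\Delta(|x|P)=F_1+F_2$, tests against $\psi\in C_0^\infty$, and uses the Hardy inequality plus the weighted $\mathbf{H}^1$ bounds to show $F_1+F_2\in (D^{1,2})'$. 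Your argument is shorter and more structural once the $A_p$ theory is invoked; the paper's is more self-contained, relying only on Hardy's inequality. Either is acceptable, and the term-by-term bounds on $\|\,|\cdot|\nabla M\,\|_2$ that you list (using Proposition \ref{p3-1} and the freshly established $\mathbf{H}^2$ bound, which gives $V,G\in L^\infty$) all check out.
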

	\begin{proof}Note that the system \eqref{PLS} can be  rewritten as
		\begin{align}\label{PLS-1}
			\begin{split}
				\left.
				\begin{aligned}
					-\Delta V-\frac{1}{2}V-\frac{1}{2}x\cdot\nabla V+V\cdot\nabla V-G\cdot\nabla G+\nabla P=f\\
					-\Delta G-\frac{1}{2}G-\frac{1}{2}x\cdot\nabla G+V\cdot\nabla G-G\cdot\nabla V=g\\
					\mbox{div}\,V =\mbox{div}\,G=0\\
				\end{aligned}\ \right\}
			\end{split}
		\end{align}
		where
		\begin{align*}
			\begin{split}
				f=-V\cdot\nabla U_{0}-U_{0}\cdot\nabla V-U_{0}\cdot\nabla U_{0}+G\cdot\nabla B_{0}+B_{0}\cdot\nabla G+B_{0}\cdot\nabla B_{0},\\
				g=-V\cdot\nabla B_{0}-U_{0}\cdot\nabla G-U_{0}\cdot\nabla B_{0}+G\cdot\nabla U_{0}+B_{0}\cdot\nabla V+B_{0}\cdot\nabla U_{0}.
			\end{split}
		\end{align*}
		By \eqref{initial2}, we can easily obtain $f,g\in L^{2}(\R^{3})$.
		Thus, we can improve the regularity of $(V,G)$ by means of the difference quotient. That is, in the weak formulation of \eqref{PLS-1}, we consider the test function as
		$$
		\varphi(x)=-D_{k}^{-h}D_{k}^{h}V(x),\ \  \phi(x)=-D_{k}^{-h}D_{k}^{h}G(x).
		$$
		Thanks to $\mathrm{div} V=0$, we obtain
		\begin{align}\label{L3-2}
			\begin{split}
				&-\int_{\R^{3}}\nabla V:\nabla D_{k}^{-h}D_{k}^{h}V dx+\frac{1}{2}\int_{\R^{3}}V\cdot D_{k}^{-h}D_{k}^{h}V dx+\frac{1}{2}\int_{\R^{3}}x\cdot\nabla V\cdot D_{k}^{-h}D_{k}^{h}V dx\\&-\int_{\R^{3}}\nabla G:\nabla D_{k}^{-h}D_{k}^{h}G dx+\frac{1}{2}\int_{\R^{3}}G\cdot D_{k}^{-h}D_{k}^{h}G dx+\frac{1}{2}\int_{\R^{3}}x\cdot\nabla G\cdot D_{k}^{-h}D_{k}^{h}G dx\\=&\int_{\R^{3}}V\cdot\nabla V\cdot D_{k}^{-h}D_{k}^{h}Vdx-\int_{\R^{3}}G\cdot\nabla G\cdot D_{k}^{-h}D_{k}^{h}Vdx-\int_{\R^{3}}f\cdot D_{k}^{-h}D_{k}^{h}Vdx\\&+\int_{\R^{3}}V\cdot\nabla G\cdot D_{k}^{-h}D_{k}^{h}Gdx-\int_{\R^{3}}G\cdot\nabla V\cdot D_{k}^{-h}D_{k}^{h}Gdx-\int_{\R^{3}}g\cdot D_{k}^{-h}D_{k}^{h}Gdx.
			\end{split}
		\end{align}
		On one hand, 
		\begin{align*}
			\begin{split}
				&-\int_{\R^{3}}\nabla V:\nabla D_{k}^{-h}D_{k}^{h}V dx-\int_{\R^{3}}\nabla G:\nabla D_{k}^{-h}D_{k}^{h}G dx\\=&\int_{\R^{3}}D_{k}^{h}\nabla V:D_{k}^{h}\nabla Vdx+\int_{\R^{3}}D_{k}^{h}\nabla G:D_{k}^{h}\nabla Gdx=\big\|\big(D_{k}^{h}\nabla V,D_{k}^{h}\nabla G\big)\big\|_{2}^{2}.
			\end{split}
		\end{align*}
		Some readily calculations give us
		\begin{align*}
			\begin{split}
				&\frac{1}{2}\int_{\R^{3}}V\cdot D_{k}^{-h}D_{k}^{h}V dx+\frac{1}{2}\int_{\R^{3}}x\cdot\nabla V\cdot D_{k}^{-h}D_{k}^{h}V dx\\=&-\frac{1}{2}\int_{\R^{3}}D_{k}^{h}V\cdot D_{k}^{h}V dx-\frac{1}{2}\int_{\R^{3}}D_{k}^{h}(x\cdot\nabla V)\cdot D_{k}^{h}V dx\\=&-\frac{1}{2}\int_{\R^{3}}D_{k}^{h}V\cdot D_{k}^{h}V dx-\frac{1}{2}\int_{\R^{3}}(x+he_{k})\cdot\nabla D_{k}^{h}V\cdot D_{k}^{h}V dx-\frac{1}{2}\int_{\R^{3}}\partial_{k}V\cdot D_{k}^{h}V dx\\=&\frac{1}{4}\|D_{k}^{h}V\|_{2}^{2}-\frac{1}{2}\int_{\R^{3}}\partial_{k}V\cdot D_{k}^{h}V dx.
			\end{split}
		\end{align*}
		Similarly,
		\begin{align*}
			\begin{split}
				&\frac{1}{2}\int_{\R^{3}}G\cdot D_{k}^{-h}D_{k}^{h}G dx+\frac{1}{2}\int_{\R^{3}}x\cdot\nabla G\cdot D_{k}^{-h}D_{k}^{h}G dx\\=&\frac{1}{4}\|D_{k}^{h}G\|_{2}^{2}-\frac{1}{2}\int_{\R^{3}}\partial_{k}G\cdot D_{k}^{h}Gdx.
			\end{split}
		\end{align*}
		On the other hand, according to $\mathrm{div} V=\mathrm{div} G=0$, one can obtain
		\begin{align*}
			\begin{split}
				&\int_{\R^{3}}V\cdot\nabla V\cdot D_{k}^{-h}D_{k}^{h}Vdx+\int_{\R^{3}}V\cdot\nabla G\cdot D_{k}^{-h}D_{k}^{h}Gdx\\=&-\int_{\R^{3}}V\cdot\nabla (D_{k}^{-h}D_{k}^{h}V)\cdot Vdx-\int_{\R^{3}}V\cdot\nabla (D_{k}^{-h}D_{k}^{h}G)\cdot Gdx\\=&\int_{\R^{3}}V(x+he_{k})\cdot\nabla D_{k}^{h}V\cdot D_{k}^{h}Vdx+\int_{\R^{3}}D_{k}^{h}V\cdot\nabla D_{k}^{h}V\cdot Vdx\\&+\int_{\R^{3}}V(x+he_{k})\cdot\nabla D_{k}^{h}G\cdot D_{k}^{h}Gdx+\int_{\R^{3}}D_{k}^{h}V\cdot\nabla D_{k}^{h}G\cdot Gdx\\=&\int_{\R^{3}}D_{k}^{h}V\cdot D_{k}^{h}\nabla V\cdot Vdx+\int_{\R^{3}}D_{k}^{h}V\cdot D_{k}^{h}\nabla G\cdot Gdx,
			\end{split}
		\end{align*}
		and
		\begin{align*}
			\begin{split}
				&-\int_{\R^{3}}G\cdot\nabla G\cdot D_{k}^{-h}D_{k}^{h}Vdx-\int_{\R^{3}}G\cdot\nabla V\cdot D_{k}^{-h}D_{k}^{h}Gdx\\=&-\int_{\R^{3}}G(x+he_{k})\cdot\nabla D_{k}^{h}V\cdot D_{k}^{h}Gdx-\int_{\R^{3}}D_{k}^{h}G\cdot\nabla D_{k}^{h}V\cdot Gdx\\&-\int_{\R^{3}}G(x+he_{k})\cdot\nabla D_{k}^{h}G\cdot D_{k}^{h}Vdx-\int_{\R^{3}}D_{k}^{h}G\cdot\nabla D_{k}^{h}G\cdot Vdx\\=&-\int_{\R^{3}}D_{k}^{h}G\cdot D_{k}^{h}\nabla V\cdot Gdx-\int_{\R^{3}}D_{k}^{h}G\cdot D_{k}^{h}\nabla G\cdot Vdx.
			\end{split}
		\end{align*}
		Thus, returning to \eqref{L3-2}, we get
		\begin{align*}
			\begin{split}
				\big\|\big(D_{k}^{h}\nabla V,D_{k}^{h}\nabla G\big)\big\|_{2}^{2}+\frac{1}{4}\big\|\big(D_{k}^{h} V,D_{k}^{h} G\big)\big\|_{2}^{2}=\sum_{i=1}^{5}J_{i},
			\end{split}
		\end{align*}
		where
		\begin{align*}
			\begin{split}
				&J_{1}=\frac{1}{2}\int_{\R^{3}}\partial_{k}V\cdot D_{k}^{h}V dx+\frac{1}{2}\int_{\R^{3}}\partial_{k}G\cdot D_{k}^{h}G dx,\\&J_{2}=\int_{\R^{3}}D_{k}^{h}V\cdot D_{k}^{h}\nabla V\cdot Vdx,\\&J_{3}=\int_{\R^{3}}D_{k}^{h}V\cdot D_{k}^{h}\nabla G\cdot Gdx,\\&J_{4}=-\int_{\R^{3}}D_{k}^{h}G\cdot D_{k}^{h}\nabla V\cdot Gdx-\int_{\R^{3}}D_{k}^{h}G\cdot D_{k}^{h}\nabla G\cdot Vdx,\\&J_{5}=-\int_{\R^{3}}f\cdot D_{k}^{-h}D_{k}^{h}Vdx-\int_{\R^{3}}g\cdot D_{k}^{-h}D_{k}^{h}Gdx.
			\end{split}
		\end{align*}	
		By the H\"{o}lder inequality and Young's inequality,
		$$
		J_{1}\leq C\big\|\big(\nabla V,\nabla G\big)\big\|_{2}^{2}
		$$ 
		and
		\begin{align*}
			\begin{split}
				J_{5}&\leq  \|f\|_{2}\|D_{k}^{-h}D_{k}^{h}V\|_{2}+\|g\|_{2}\|D_{k}^{-h}D_{k}^{h}G\|_{2}\\&\leq C(\|f\|_{2}^{2}+\|g\|_{2}^{2})+\frac{1}{64}\big\|\big(D_{k}^{h}\nabla V,D_{k}^{h}\nabla G\big)\big\|_{2}^{2}.
			\end{split}
		\end{align*}
		According to the interpolation inequality,
		\begin{align*}
			\begin{split}
				J_{2}&\leq  \|D_{k}^{h}V\|_{3}\|D_{k}^{h}\nabla V\|_{2}\|V\|_{6}\\&\leq C\big\|D_{k}^{h}V\big\|_{2}^{\frac{1}{2}}\big\|D_{k}^{h}\nabla V\big\|_{2}^{\frac{3}{2}}\|V\|_{6}
				\\&\leq C\|D_{k}^{h}V\|_{2}^{2}\|\nabla V\|_{2}^{4}+\frac{1}{64}\|D_{k}^{h}\nabla V\|_{2}^{2},
			\end{split}
		\end{align*}
		and
		\begin{align*}
			\begin{split}
				J_{3}&\leq  \|D_{k}^{h}V\|_{3}\|D_{k}^{h}\nabla G\|_{2}\|G\|_{6}\\&\leq C\big\|D_{k}^{h}V\big\|_{2}^{\frac{1}{2}}\big\|D_{k}^{h}\nabla V\big\|_{2}^{\frac{1}{2}}\|D_{k}^{h}\nabla G\|_{2}\|G\|_{6}
				\\&\leq C\big\|D_{k}^{h}V\big\|_{2}\|G\|_{6}^{2}\big\|D_{k}^{h}\nabla V\big\|_{2}+\frac{1}{64}\|D_{k}^{h}\nabla G\|_{2}^{2}\\&\leq C\big\|D_{k}^{h}V\big\|_{2}^{2}\|G\|_{6}^{4}+\frac{1}{64}\big\|\big(D_{k}^{h}\nabla V,D_{k}^{h}\nabla G\big)\big\|_{2}^{2}.
			\end{split}
		\end{align*}
		Similarly,
		$$
		J_{4}\leq C\Big(\|D_{k}^{h}G\|_{2}^{2}\|G\|_{6}^{4}+\|D_{k}^{h}G\|_{2}^{2}\|V\|_{6}^{4}\Big)+\frac{1}{64}\big\|\big(D_{k}^{h}\nabla V,D_{k}^{h}\nabla G\big)\big\|_{2}^{2}.
		$$
		The above estimate enables us to obtain
		\begin{align*}
			\begin{split}
				\big\|\big(D_{k}^{h}\nabla V,D_{k}^{h}\nabla G\big)\big\|_{2}^{2}+\frac{1}{4}\big\|\big(D_{k}^{h} V,D_{k}^{h} G\big)\big\|_{2}^{2}\leq C(U_{0},B_{0}).
			\end{split}
		\end{align*}
		Letting $h\to0$, we conclude that
		$$
		\big\|(V,G)\big\|_{\mathbf{H}^{2}(\R^{3})}\leq C(U_{0},B_{0}).
		$$
		The embedding theorem implies that the following $L^{\infty}$-bounded
		\begin{equation}\label{infty}
		\|(V,G)\|_{\infty}\leq C(U_{0},B_{0}).
		\end{equation}
		Next, we will use the duality argument to prove that 
		$$
		\bar{P}=|\cdot|P\in D^{1,2}(\R^{3}).
		$$
		According to the first equation of \eqref{PLS}, one obtains
		\begin{align*}
			\begin{split}
				-\Delta\bar{P}=&\underbrace{|x|{\rm div}\big((V+U_{0})\cdot\nabla(V+U_{0})-(G+B_{0})\cdot\nabla(G+B_{0})\big)}_{F_{1}}\\&\underbrace{-\frac{2}{|x|}P-2\frac{x}{|x|}\cdot\nabla P.}_{F_{2}}
			\end{split}
		\end{align*}
		From elliptic theory, it is sufficient to prove that $F=F_{1}+F_{2}\in \big(D^{1,2}(\R^{3})\big)^{\prime}$.
		On one hand, for all $\psi(x)\in C_{0}^{\infty}(\R^{3})$, we can apply the Hardy inequality and \eqref{infty} to obtain
		\begin{align*}
			\begin{split}
				\langle F_{1},\psi\rangle&=-\int_{\R^{3}}\Big((V+U_{0})\cdot\nabla(V+U_{0})-(G+B_{0})\cdot\nabla(G+B_{0})\Big)\nabla(|x|\psi)dx\\&\leq\int_{\R^{3}}|x|\big|(V+U_{0})\cdot\nabla(V+U_{0})-(G+B_{0})\cdot\nabla(G+B_{0})\big|\Big(\frac{|\psi|}{|x|}+|\nabla\psi|\Big)dx\\&\leq CC_{finte}\|\nabla\psi\|_{2},
			\end{split}
		\end{align*}
		where
		\begin{align*}
			\begin{split}
				C_{finite}=&\|V\|_{\infty}\||\cdot|\nabla V\|_{2}+\||\cdot|\nabla U_{0}\|_{\infty}\|V\|_{2}+\||\cdot| U_{0}\|_{\infty}\|\nabla V\|_{2}+\||\cdot| U_{0}\|_{\infty}\|\nabla U_{0}\|_{2}\\&+\|G\|_{\infty}\||\cdot|\nabla G\|_{2}+\||\cdot|\nabla B_{0}\|_{\infty}\|G\|_{2}+\||\cdot| B_{0}\|_{\infty}\|\nabla G\|_{2}+\||\cdot| B_{0}\|_{\infty}\|\nabla B_{0}\|_{2}.
			\end{split}
		\end{align*}
		On the other hand,
		\begin{align*}
			\begin{split}
				\langle F_{2},\psi\rangle&=-2\int_{\R^{3}}P\bigg(\frac{\psi}{|x|}-{\rm div}\Big(\frac{x}{|x|}\psi\Big)\bigg)dx\\&=-2\int_{\R^{3}}P\bigg(\frac{\psi}{|x|}-\frac{2}{|x|}\psi-\frac{x}{|x|}\nabla\psi\bigg)dx\\&\leq C\|P\|_{2}\bigg(\Big\|\frac{\psi}{|x|}\Big\|_{2}+\|\nabla\psi\|_{2}\bigg)\\&\leq C\|P\|_{2}\|\nabla\psi\|_{2}.
			\end{split}
		\end{align*}
		Then, the suitable approximate enable us to conclude the proof.
	\end{proof}

	\begin{lem}\label{L3-12}
		Assume that $(U_{0},B_{0})$ satisfies \eqref{initial2}. Let $(V,G,P)$ be the weak solution of the system \eqref{PLS} constructed in Theorem \ref{T1.2}. Then, we have $(|x|V,|x|G)\in \mathbf{H}^{2}(\R^{3})$
		and 
		\begin{equation}\label{L3-2-1}
			\big\|(|\cdot|V,|\cdot|G)\big\|_{\mathbf{H}^{2}(\R^{3})}\leq C(U_{0},B_{0}).
		\end{equation}
		
	\end{lem}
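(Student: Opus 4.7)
The strategy is to combine the weighted-test-function technique of Proposition \ref{p3-1} with the second-order difference quotient argument of Lemma \ref{L3-1}, producing a weighted $\mathbf{H}^{2}$ bound. Since $(V,G)$ is smooth by Theorem \ref{T1.3}, I plug the weighted second-order difference quotients
$$\varphi^{k} = -D_{k}^{-h}\bigl(h_{\varepsilon}^{2}\, D_{k}^{h} V\bigr), \qquad \psi^{k} = -D_{k}^{-h}\bigl(h_{\varepsilon}^{2}\, D_{k}^{h} G\bigr)$$
into the weak formulation \eqref{W1}--\eqref{W2}, sum over $k = 1, 2, 3$, pass to the limit $h \to 0$, and finally let $\varepsilon \to 0^{+}$. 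For each fixed $\varepsilon>0$, $h_{\varepsilon}$ is bounded, so $(\varphi^{k},\psi^{k}) \in \mathbf{H}^{1}(\R^{3})$ are admissible.

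The diffusion terms yield the principal quantity $\sum_{k} \bigl(\|h_{\varepsilon} D_{k}^{h} \nabla V\|_{2}^{2} + \|h_{\varepsilon} D_{k}^{h} \nabla G\|_{2}^{2}\bigr)$ plus commutators involving $\nabla h_{\varepsilon}$, absorbed by Young's inequality using the uniform bound $|\nabla h_{\varepsilon}| \leq C$ and the $\mathbf{H}^{1}$ estimate of Lemma \ref{L3-1}. The drift $-\tfrac{1}{2} x \cdot \nabla V$ and the zeroth-order term $-\tfrac{1}{2} V$ contribute analogously; the crucial identity $x \cdot \nabla h_{\varepsilon} = h_{\varepsilon}\bigl[1 - \tfrac{3}{2}\varepsilon|x|^{2}/(1+\varepsilon|x|^{2})\bigr]$ gives $|x \cdot \nabla h_{\varepsilon}| \leq C h_{\varepsilon}$, so correction terms are controlled via the weighted $\mathbf{H}^{1}$ estimate \eqref{p-3-1}. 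The nonlinear terms are treated as in Lemma \ref{L3-1}, exploiting $\mathrm{div}\, V = \mathrm{div}\, G = 0$, the $L^{\infty}$ bound \eqref{infty}, and the pointwise decay \eqref{initial2}. For instance, the analogue of $J_{2}$, after integrating by parts using the divergence-free property, collapses to $-\int (V \cdot \nabla h_{\varepsilon})\, h_{\varepsilon} |D_{k}^{h} V|^{2}\, dx$, which is controlled by $\|V\|_{\infty}\, \|h_{\varepsilon} D_{k}^{h} V\|_{2}^{2}$; the cross terms involving $U_{0}, B_{0}$ use $\||\cdot|U_{0}\|_{\infty} + \||\cdot|\nabla U_{0}\|_{\infty} < \infty$.

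The main obstacle is the pressure contribution. Using $\mathrm{div}\, D_{k}^{h} V = 0$, it collapses to
$$\int P\, \mathrm{div}(\varphi^{k})\, dx = -\int D_{k}^{h} P \cdot \nabla(h_{\varepsilon}^{2}) \cdot D_{k}^{h} V\, dx.$$
Since $|\nabla h_{\varepsilon}^{2}| = 2 h_{\varepsilon} |\nabla h_{\varepsilon}| \leq C|x|$, after passing $h \to 0$ this is bounded by $C\||\cdot|\nabla P\|_{2} \|\nabla V\|_{2}$, which is finite thanks to the crucial estimate $|\cdot|P \in D^{1,2}(\R^{3})$ established in Lemma \ref{L3-1} (yielding $\||x|\nabla P\|_{2} \leq \|\nabla(|x|P)\|_{2} + \|P\|_{2} < \infty$). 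Without this pressure bound the argument would fail. Collecting all contributions, absorbing small multiples of the principal term via Young's inequality, and letting first $h \to 0$ and then $\varepsilon \to 0^{+}$ by dominated convergence, combined with \eqref{p-3-1}, yields the desired estimate \eqref{L3-2-1}.
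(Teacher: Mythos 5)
Your proposal follows the paper's proof essentially line for line: the same test functions $\varphi=-D_k^{-h}(h_\varepsilon^2 D_k^h V)$, $\psi=-D_k^{-h}(h_\varepsilon^2 D_k^h G)$, the same decomposition into diffusion, drift, pressure, and nonlinear contributions, and the same reliance on the weighted $\mathbf{H}^1$ estimate, the $L^\infty$ bound, and the pressure estimate from Lemma~\ref{L3-1}. Two small remarks are in order.

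First, your claimed bound for the $J_2$-analogue is not correct as stated. After integrating by parts you arrive at a quantity of the form $\int h_\varepsilon |V\cdot\nabla h_\varepsilon|\,|D_k^h V|^2\,dx$, which carries the weight $h_\varepsilon$ to the \emph{first} power; since $h_\varepsilon<1$ near the origin, this integral is \emph{not} dominated by $\|h_\varepsilon D_k^h V\|_2^2=\int h_\varepsilon^2|D_k^h V|^2\,dx$. Moreover, even were that inequality available, the term $\|V\|_\infty\|h_\varepsilon D_k^h V\|_2^2$ cannot be absorbed by the coercive quantity $\tfrac34\|\eta_\varepsilon h_\varepsilon D_k^h V\|_2^2$ on the left, because $\|V\|_\infty$ is not small and $\eta_\varepsilon\le1$ works against you. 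The correct estimate, as in the paper, is $\int h_\varepsilon|V\cdot\nabla h_\varepsilon|\,|D_k^h V|^2\,dx\le C\|V\|_\infty\big\||\cdot|D_k^h V\big\|_2\,\|D_k^h V\|_2\le C\|V\|_\infty\big\||\cdot|\nabla V\big\|_2\,\|\nabla V\|_2$, using $h_\varepsilon\le|x|$ and Cauchy--Schwarz — a finite quantity controlled by Proposition~\ref{p3-1} and Theorem~\ref{T1.2}, not a term that needs absorption. (You also suppress the companion piece $\int h_\varepsilon^2 D_k^h V\cdot\nabla D_k^h V\cdot V\,dx$ coming from the discrete Leibniz rule, which produces $\|h_\varepsilon D_k^h\nabla V\|_2$ and is absorbed by Young's inequality; since you write ``for instance'' this is just an abbreviation, but it is worth recording.)

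Second, for the pressure term the paper bounds $|K_1|\le C\|\nabla P\|_2\,\||\cdot|\nabla V\|_2$, splitting the weight $|x|$ onto the $V$-factor, whereas you split it onto the $P$-factor and invoke $\||\cdot|\nabla P\|_2\le\|\nabla(|\cdot|P)\|_2+\|P\|_2$ via Lemma~\ref{L3-1}. Both splits are valid; yours is perhaps slightly cleaner since $|\cdot|P\in D^{1,2}(\R^3)$ is stated explicitly in Lemma~\ref{L3-1}, while the paper's $\nabla P\in L^2(\R^3)$ has to be read off from the first PLS equation together with $(V,G)\in\mathbf{H}^2$ and $(|\cdot|\nabla V,|\cdot|\nabla G)\in\mathbf{L}^2$.
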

	\begin{proof}
		Notice that
		$$
		\partial_{i}\partial_{j}(|x|V)=\frac{\delta_{ij}|x|-\frac{x_{i}x_{j}}{|x|}}{|x|^{2}}V+\frac{x_{j}}{|x|}\partial_{i}V+\frac{x_{i}}{|x|}\partial_{j}V+|x|\partial_{i}\partial_{j}V.
		$$
		Thus, by Proposition \ref{p3-1} and Lemma \ref{L3-1}, it remains to prove that 
		$$
		(|\cdot|\nabla^{2}V,|\cdot|\nabla^{2}G)\in \mathbf{L}^{2}(\R^{3}).
		$$ In the weak formulation \eqref{W1}-\eqref{W2}, we take the test function as
		$$
		\varphi(x)=-D^{-h}_{k}h_{\varepsilon}^{2}D_{k}^{h}V,\ \ \phi(x)=-D^{-h}_{k}h_{\varepsilon}^{2}D_{k}^{h}G.
		$$
		By performing some straightforward calculations, we have
		\begin{align*}
			\begin{split}
				&\int_{\R^{3}}\nabla V:\nabla(-D^{-h}_{k}h_{\varepsilon}^{2}D_{k}^{h}V)dx+\int_{\R^{3}}\nabla G:\nabla(-D^{-h}_{k}h_{\varepsilon}^{2}D_{k}^{h}G)dx\\=&\int_{\R^{3}}D_{k}^{h}\nabla V:\nabla(h_{\varepsilon}^{2}D_{k}^{h}V)dx+\int_{\R^{3}}D_{k}^{h}\nabla G:\nabla(h_{\varepsilon}^{2}D_{k}^{h}G)dx\\=&\big\|h_{\varepsilon}D_{k}^{h}\nabla V\big\|_{2}^{2}+2\int_{\R^{3}}h_{\varepsilon}D_{k}^{h}\nabla V:\big(\nabla h_{\varepsilon}\otimes D_{k}^{h}V\Big)dx\\&+\big\|h_{\varepsilon}D_{k}^{h}\nabla G\big\|_{2}^{2}+2\int_{\R^{3}}h_{\varepsilon}D_{k}^{h}\nabla G:\Big(\nabla h_{\varepsilon}\otimes D_{k}^{h}G\Big)dx,
			\end{split}
		\end{align*}
		and
		\begin{align*}
			\begin{split}
				&-\frac{1}{2}\int_{\R^{3}}x\cdot\nabla V\cdot\big(-D^{-h}_{k}h_{\varepsilon}^{2}D_{k}^{h}V\big)dx\\=&-\frac{1}{2}\int_{\R^{3}}(x+he_{k})\cdot\nabla D_{k}^{h}V\cdot\big(h_{\varepsilon}^{2}D_{k}^{h}V\big)dx-\frac{1}{2}\int_{\R^{3}}e_{k}\cdot\nabla V\cdot(h_{\varepsilon}^{2}D_{k}^{h}V)dx\\=&-\frac{1}{2}\int_{\R^{3}}h_{\varepsilon}^{2}x\cdot\nabla D_{k}^{h}V\cdot D_{k}^{h}Vdx-\frac{h}{2}\int_{\R^{3}}h_{\varepsilon}^{2}\partial_{k}D_{k}^{h} V\cdot D_{k}^{h}Vdx-\frac{1}{2}\int_{\R^{3}}h_{\varepsilon}^{2}\partial_{k} V\cdot D_{k}^{h}Vdx\\=&\frac{3}{4}\int_{\R^{3}}h_{\varepsilon}^{2} D_{k}^{h}V\cdot D_{k}^{h}Vdx+\frac{1}{2}\int_{\R^{3}}(x\cdot\nabla h_{\varepsilon})\cdot D_{k}^{h}V\cdot (h_{\varepsilon}D_{k}^{h}V)dx\\&+\frac{h}{4}\int_{\R^{3}}(\partial_{k}h_{\varepsilon}^{2})D_{k}^{h} V\cdot D_{k}^{h}Vdx-\frac{1}{2}\int_{\R^{3}}h_{\varepsilon}^{2}\partial_{k} V\cdot D_{k}^{h}Vdx\\=&\frac{5}{4}\int_{\R^{3}}h_{\varepsilon}^{2} D_{k}^{h}V\cdot D_{k}^{h}Vdx-\frac{3}{4}\int_{\R^{3}}\frac{\varepsilon|x|^{3}}{(1+\varepsilon|x|^{2})^{\frac{7}{4}}} D_{k}^{h}V\cdot(h_{\varepsilon} D_{k}^{h}V)dx\\&+\frac{h}{4}\int_{\R^{3}}(\partial_{k}h_{\varepsilon}^{2})D_{k}^{h} V\cdot D_{k}^{h}Vdx-\frac{1}{2}\int_{\R^{3}}h_{\varepsilon}^{2}\partial_{k} V\cdot D_{k}^{h}Vdx\\=&\frac{1}{2}\|h_{\varepsilon}D_{k}^{h}V\|_{2}^{2}+\frac{3}{4}\|\eta_{\varepsilon}h_{\varepsilon}D_{k}^{h}V\|_{2}^{2}
				\\&+\frac{h}{4}\int_{\R^{3}}(\partial_{k}h_{\varepsilon}^{2})D_{k}^{h} V\cdot D_{k}^{h}Vdx-\frac{1}{2}\int_{\R^{3}}h_{\varepsilon}^{2}\partial_{k} V\cdot D_{k}^{h}Vdx.
			\end{split}
		\end{align*}
		Thus,
		\begin{align*}
			\begin{split}
				&-\frac{1}{2}\int_{\R^{3}}x\cdot\nabla V\cdot\big(-D^{-h}_{k}h_{\varepsilon}^{2}D_{k}^{h}V\big)dx-\frac{1}{2}\int_{\R^{3}}V\cdot(-D^{-h}_{k}h_{\varepsilon}^{2}D_{k}^{h}V)dx\\=&-\frac{1}{2}\int_{\R^{3}}x\cdot\nabla V\cdot\big(-D^{-h}_{k}h_{\varepsilon}^{2}D_{k}^{h}V\big)dx-\frac{1}{2}\int_{\R^{3}}h_{\varepsilon}^{2} D_{k}^{h}V\cdot D_{k}^{h}Vdx\\=&\frac{3}{4}\|\eta_{\varepsilon}h_{\varepsilon}D_{k}^{h}V\|_{2}^{2}+\frac{h}{4}\int_{\R^{3}}(\partial_{k}h_{\varepsilon}^{2})D_{k}^{h} V\cdot D_{k}^{h}Vdx-\frac{1}{2}\int_{\R^{3}}h_{\varepsilon}^{2}\partial_{k} V\cdot D_{k}^{h}Vdx.
			\end{split}
		\end{align*}
		Similarly,
		\begin{align*}
			\begin{split}
				&-\frac{1}{2}\int_{\R^{3}}x\cdot\nabla G\cdot\big(-D^{-h}_{k}h_{\varepsilon}^{2}D_{k}^{h}G\big)dx-\frac{1}{2}\int_{\R^{3}}G\cdot(-D^{-h}_{k}h_{\varepsilon}^{2}D_{k}^{h}G)dx\\=&\frac{3}{4}\|\eta_{\varepsilon}h_{\varepsilon}D_{k}^{h}G\|_{2}^{2}+\frac{h}{4}\int_{\R^{3}}(\partial_{k}h_{\varepsilon}^{2})D_{k}^{h} G\cdot D_{k}^{h}Gdx-\frac{1}{2}\int_{\R^{3}}h_{\varepsilon}^{2}\partial_{k} G\cdot D_{k}^{h}Gdx.
			\end{split}
		\end{align*}
		Hence, we have
		\begin{align}\label{L3-2-2}
			\begin{split}
				&\big\|\big(h_{\varepsilon}D_{k}^{h}\nabla V,h_{\varepsilon}D_{k}^{h}\nabla G\big)\big\|_{2}^{2}+\frac{3}{4}\big\|\big(\eta_{\varepsilon}h_{\varepsilon}D_{k}^{h}V,\eta_{\varepsilon}h_{\varepsilon}D_{k}^{h}G\big)\big\|_{2}^{2}=\sum_{i=1}^{8}K_{i}
			\end{split}
		\end{align}
		where
		\begin{align*}
			\begin{split}
				&K_{1}:=\int_{\R^{3}}P\mathrm{div}(-D_{k}^{-h}h_{\varepsilon}^{2}D_{k}^{h}V) dx,\\&
				K_{2}:=-2\int_{\R^{3}}h_{\varepsilon}D_{k}^{h}\nabla V:\big(\nabla h_{\varepsilon}\otimes D_{k}^{h}V\Big)dx-2\int_{\R^{3}}h_{\varepsilon}D_{k}^{h}\nabla G:\Big(\nabla h_{\varepsilon}\otimes D_{k}^{h}G\Big)dx,\\&
				K_{3}:=-\frac{h}{4}\int_{\R^{3}}(\partial_{k}h_{\varepsilon}^{2})D_{k}^{h} V\cdot D_{k}^{h}Vdx-\frac{h}{4}\int_{\R^{3}}(\partial_{k}h_{\varepsilon}^{2})D_{k}^{h} G\cdot D_{k}^{h}Gdx,\\&
				K_{4}:=\frac{1}{2}\int_{\R^{3}}h_{\varepsilon}^{2}\partial_{k} V\cdot D_{k}^{h}Vdx+\frac{1}{2}\int_{\R^{3}}h_{\varepsilon}^{2}\partial_{k} G\cdot D_{k}^{h}Gdx,\\&
				K_{5}:=-\int_{\R^{3}}(V+U_{0})\cdot\nabla(V+U_{0})\cdot(-D_{k}^{-h}h_{\varepsilon}^{2}D_{k}^{h}V) dx,\\&
				K_{6}:=\int_{\R^{3}}(G+B_{0})\cdot\nabla(G+B_{0})\cdot(-D_{k}^{-h}h_{\varepsilon}^{2}D_{k}^{h}V) dx,\\&
				K_{7}:=-\int_{\R^{3}}(V+U_{0})\cdot\nabla(G+B_{0})\cdot(-D_{k}^{-h}h_{\varepsilon}^{2}D_{k}^{h}G) dx,\\&K_{8}:=\int_{\R^{3}}(G+B_{0})\cdot\nabla(V+U_{0})\cdot(-D_{k}^{-h}h_{\varepsilon}^{2}D_{k}^{h}G)dx.
			\end{split}
		\end{align*}
		Thanks to the incompressible conditions, we can get
		\begin{align*}
			\begin{split}
				K_{1}&=2\int_{\R^{3}}D_{k}^{h}P\frac{x}{|x|(1+\varepsilon|x|^{2})^{\frac{3}{4}}}\cdot h_{\varepsilon}D_{k}^{h}Vdx-3\int_{\R^{3}}D_{k}^{h}P\frac{\varepsilon|x|x}{(1+\varepsilon|x|^{2})^{\frac{7}{4}}}\cdot h_{\varepsilon}D_{k}^{h}Vdx\\&\leq C\big\|D_{k}^{h}P\big\|_{2}\big\||\cdot|D_{k}^{h}V\big\|_{2}\leq C\|\nabla P\|_{2}\big\||\cdot|\nabla V\big\|_{2}.
			\end{split}
		\end{align*}
		Since $|\nabla h_{\varepsilon}|\leq \frac{5}{2}$, we can apply
	 the H\"{o}lder inequality and Young's inequality to obtain
		\begin{align*}
			\begin{split}
				K_{2}&\leq C\Big(\big\|h_{\varepsilon}D_{k}^{h}\nabla V\big\|_{2}\big\|D_{k}^{h}V\big\|_{2}+\big\|h_{\varepsilon}D_{k}^{h}\nabla G\big\|_{2}\big\|D_{k}^{h}G\big\|_{2}\Big)\\&\leq C\big\|\big(\nabla V,\nabla G\big)\big\|_{2}^{2}+\frac{1}{64}\big\|\big(h_{\varepsilon}D_{k}^{h}\nabla V,h_{\varepsilon}D_{k}^{h}\nabla G\big)\big\|_{2}^{2}.
			\end{split}
		\end{align*}
		For $K_{3}$, we take $h$ to be sufficiently small to derive
		\begin{align*}
			\begin{split}
				K_{3}=&-\frac{h}{2}\int_{\R^{3}}\frac{x_{k}}{|x|(1+\varepsilon|x|^{2})^{\frac{3}{4}}}h_{\varepsilon}D_{k}^{h} V\cdot D_{k}^{h}Vdx+\frac{3h}{4}\int_{\R^{3}}\frac{\varepsilon|x|x_{k}}{(1+\varepsilon|x|^{2})^{\frac{7}{4}}}h_{\varepsilon}D_{k}^{h} V\cdot D_{k}^{h}Vdx\\&-\frac{h}{2}\int_{\R^{3}}\frac{x_{k}}{|x|(1+\varepsilon|x|^{2})^{\frac{3}{4}}}h_{\varepsilon}D_{k}^{h} G\cdot D_{k}^{h}Gdx+\frac{3h}{4}\int_{\R^{3}}\frac{\varepsilon|x|x_{k}}{(1+\varepsilon|x|^{2})^{\frac{7}{4}}}h_{\varepsilon}D_{k}^{h} G\cdot D_{k}^{h}Gdx\\ \leq& C\Big(\big\|\eta_{\varepsilon}h_{\varepsilon}D_{k}^{h}V\big\|_{2}\big\|D_{k}^{h}V\big\|_{2}+\big\|\eta_{\varepsilon}h_{\varepsilon}D_{k}^{h}G\big\|_{2}\big\|D_{k}^{h}G\big\|_{2}\Big)\\ \leq& C\big\|\big(\nabla V,\nabla G\big)\big\|_{2}^{2}+\frac{1}{64}\big\|\big(\eta_{\varepsilon}h_{\varepsilon}D_{k}^{h}V,\eta_{\varepsilon}h_{\varepsilon}D_{k}^{h}G\big)\big\|_{2}^{2}.
			\end{split}
		\end{align*}
		According to \eqref{p-3-1}, 
		\begin{align*}
			\begin{split}
				K_{4}&\leq C\Big(\big\||\cdot|\partial_{k} V\big\|_{2}\big\||\cdot|D_{k}^{h}V\big\|_{2}+\big\||\cdot|\partial_{k} G\big\|_{2}\big\||\cdot|D_{k}^{h}G\big\|_{2}\Big)\\&\leq C\big\|\big(|\cdot|\nabla V,|\cdot|\nabla G\big)\big\|_{2}^{2}.
			\end{split}
		\end{align*}
		Next, we focus on $K_{5}$. By the estimates of $V$ and $U_{0}$, we have
		\begin{align*}
			\begin{split}
				&-\int_{\R^{3}}V\cdot\nabla V\cdot(-D_{k}^{-h}h_{\varepsilon}^{2}D_{k}^{h}V) dx=\int_{\R^{3}}V\cdot\nabla (-D_{k}^{-h}h_{\varepsilon}^{2}D_{k}^{h}V)\cdot V dx\\=&\int_{\R^{3}}V(x+he_{k})\cdot\nabla (h_{\varepsilon}^{2}D_{k}^{h}V)\cdot D_{k}^{h}V dx+\int_{\R^{3}}D_{k}^{h}V\cdot\nabla (h_{\varepsilon}^{2}D_{k}^{h}V)\cdot V dx\\=&2\int_{\R^{3}}h_{\varepsilon}V(x+he_{k})\cdot\nabla h_{\varepsilon}\otimes D_{k}^{h}V\cdot D_{k}^{h}V dx+\int_{\R^{3}}h_{\varepsilon}^{2}V(x+he_{k})\cdot D_{k}^{h}\nabla V\cdot D_{k}^{h}V dx
				\\&+2\int_{\R^{3}}h_{\varepsilon}D_{k}^{h}V\cdot\nabla h_{\varepsilon}\otimes D_{k}^{h}V\cdot V dx+\int_{\R^{3}}h_{\varepsilon}^{2}D_{k}^{h}V\cdot D_{k}^{h}\nabla V\cdot V dx\\ \leq& C\|V\|_{\infty}\big\||\cdot|D_{k}^{h} V\big\|_{2}\|D_{k}^{h}V\|_{2}+C\|V\|_{\infty}\big\||\cdot|D_{k}^{h} V\big\|_{2}\big\|h_{\varepsilon}D_{k}^{h}\nabla V\big\|_{2}\\ \leq& C\Big(\|V\|_{\infty}\big\||\cdot|\nabla V\big\|_{2}\|\nabla V\|_{2}+\|V\|_{\infty}^{2}\big\||\cdot|\nabla V\big\|_{2}^{2}\Big)+\frac{1}{64}\big\|h_{\varepsilon}D_{k}^{h}\nabla V\big\|_{2}^{2}
			\end{split}
		\end{align*}
		and
		\begin{align*}
			\begin{split}
				&-\int_{\R^{3}}V\cdot\nabla U_{0}\cdot(-D_{k}^{-h}h_{\varepsilon}^{2}D_{k}^{h}V) dx\\=&-\int_{\R^{3}}V(x+he_{k})\cdot D_{k}^{h}\nabla U_{0}\cdot(h_{\varepsilon}^{2}D_{k}^{h}V) dx-\int_{\R^{3}}D_{k}^{h}V\cdot \nabla U_{0}\cdot(h_{\varepsilon}^{2}D_{k}^{h}V) dx\\ \leq& \big\||\cdot|^{2}\nabla^{2}U_{0}\big\|_{\infty}\|V\|_{2}\|\nabla V\|_{2}+\big\||\cdot|^{2}\nabla U_{0}\big\|_{\infty}\|\nabla V\|_{2}^{2},
			\end{split}
		\end{align*}
		where we used higher order derivative estimates of $U_{0}$, see \eqref{initial2}.
		Similarly,
		\begin{align*}
			\begin{split}
				&-\int_{\R^{3}}U_{0}\cdot\nabla V\cdot(-D_{k}^{-h}h_{\varepsilon}^{2}D_{k}^{h}V) dx\\=&-\int_{\R^{3}}U_{0}(x+he_{k})\cdot D_{k}^{h}\nabla V\cdot(h_{\varepsilon}^{2}D_{k}^{h}V) dx-\int_{\R^{3}}D_{k}^{h}U_{0}\cdot \nabla V\cdot(h_{\varepsilon}^{2}D_{k}^{h}V) dx\\ \leq& \big\||\cdot|U_{0}\big\|_{\infty}\big\||\cdot|\nabla V\big\|_{2}\|h_{\varepsilon}D_{k}^{h}\nabla V\|_{2}+\big\||\cdot|^{2}\nabla U_{0}\big\|_{\infty}\|\nabla V\|_{2}^{2}\\ \leq& C\big\||\cdot|U_{0}\big\|_{\infty}^{2}\big\||\cdot|\nabla V\big\|_{2}^{2}+\big\||\cdot|^{2}\nabla U_{0}\big\|_{\infty}\|\nabla V\|_{2}^{2}+\frac{1}{64}\|h_{\varepsilon}D_{k}^{h}\nabla V\|_{2}^{2}
			\end{split}
		\end{align*}
		and
		\begin{align*}
			\begin{split}
				&-\int_{\R^{3}}U_{0}\cdot\nabla U_{0}\cdot(-D_{k}^{-h}h_{\varepsilon}^{2}D_{k}^{h}V) dx\\=&-\int_{\R^{3}}U_{0}(x+he_{k})\cdot D_{k}^{h}\nabla U_{0}\cdot(h_{\varepsilon}^{2}D_{k}^{h}V) dx-\int_{\R^{3}}D_{k}^{h}U_{0}\cdot \nabla U_{0}\cdot(h_{\varepsilon}^{2}D_{k}^{h}V) dx\\ \leq& \big\||\cdot|U_{0}\big\|_{\infty}\|\nabla^{2}U_{0}\|_{2}\big\||\cdot|\nabla V\big\|_{2}+\big\||\cdot|^{2}\nabla U_{0}\big\|_{\infty}\|\nabla U_{0}\|_{2}\|\nabla V\|_{2}.
			\end{split}
		\end{align*}
		Thus, we get
		$$
		K_{5}\leq C(U_{0},B_{0})+\frac{1}{32}\|h_{\varepsilon}D_{k}^{h}\nabla V\|_{2}^{2}.
		$$
		Since $E$ and $B_{0}$ possess the same estimates as $V$ and $U_{0}$, respectively, the following estimates can be obtained in a same way
		\begin{align*}
			\begin{split}
				K_{6}+K_{7}+K_{8}\leq C(U_{0},B_{0})+\frac{1}{16}\big\|\big(h_{\varepsilon}D_{k}^{h}\nabla V,h_{\varepsilon}D_{k}^{h}\nabla G\big)\big\|_{2}^{2}.
			\end{split}
		\end{align*}
		Collecting the estimates $K_{1}-K_{8}$ and returning to \eqref{L3-2-2}, we finally get
		\begin{align}\label{L3-2-3}
			\begin{split}
				\big\|\big(h_{\varepsilon}D_{k}^{h}\nabla V,h_{\varepsilon}D_{k}^{h}\nabla G\big)\big\|_{2}^{2}+\big\|\big(\eta_{\varepsilon}h_{\varepsilon}D_{k}^{h}V,\eta_{\varepsilon}h_{\varepsilon}D_{k}^{h}G\big)\big\|_{2}^{2}\leq C(U_{0},B_{0}).
			\end{split}
		\end{align}
		Letting $\varepsilon$ and $h$ tend to zero in \eqref{L3-2-3}, the proof of this lemma is complete.
	\end{proof}
	
	{\bf Step 3:} the $\mathbf{H}^{2}$-estimate of $\big(|\cdot|\nabla V,|\cdot|\nabla G\big)$.
	
		First, we will consider the following linear system
	\begin{align}\label{LS}
		\begin{split}
			\left.
			\begin{aligned}
				-\Delta W-\frac{1}{2}W-\frac{1}{2}x\cdot\nabla W+\bar{W}\cdot\nabla W+\nabla P=F\\
				\mbox{div}\,W =0\\
			\end{aligned}\ \right\} \ \ \mbox{in}\ \R^{3}.
		\end{split}
	\end{align}
	By applying the same argument of lemma \ref{L3-1}, the following proposition can be established.
	\begin{prop}\label{P-3-4}
		Let $F\in L^{2}(\R^{3})$ and $\bar{W}\in H^{2}(\R^{3})$ with $\rm{div}\,\bar{W} =0$. Assume that $(W,P)$ is a weak solution of \eqref{LS}, i.e. $(W,P)\in H^{1}(\R^{3})\times L^{2}(\R^{3})$ with $|x|\nabla W\in L^{2}(\R^{3})$ and
		\begin{align}\label{P-W}
			\begin{split}
				&\int_{\R^{3}}\nabla W:\nabla\varphi dx-\frac{1}{2}\int_{\R^{3}}W\cdot\varphi dx-\frac{1}{2}\int_{\R^{3}}x\cdot\nabla W\cdot\varphi dx\\&=\int_{\R^{3}}P\mathrm{div}\varphi dx-\int_{\R^{3}} \bar{W}\cdot\nabla W\cdot\varphi dx+\int_{\R^{3}}F\cdot\varphi dx
			\end{split}
		\end{align}
		holds for all vector function $\varphi\in H^{1}(\R^{3})$.
		Then, we have $W\in H^{2}(\R^{3})$ and 
		\begin{equation}\label{P-3-5}
			\|W\|_{H^{2}(\R^{3})}\leq C,
		\end{equation}
		where the constant $C=C(\|W\|_{H^{1}(\R^{3})},\|\bar{W}\|_{H^{2}(\R^{3})},\|F\|_{L^{2}(\R^{3})})>0$.
	\end{prop}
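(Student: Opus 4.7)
The plan is to mimic the difference-quotient argument used in Lemma \ref{L3-1}, since the linear system \eqref{LS} has exactly the same linear drift structure $-\Delta - \frac{1}{2}\mathrm{Id} - \frac{1}{2}x\cdot\nabla$ as the leading part of \eqref{PLS-1}, with the only substantive change being that the quadratic nonlinearity $V\cdot\nabla V - G\cdot\nabla G$ is replaced by the single transport term $\bar{W}\cdot\nabla W$ with a divergence-free advection field $\bar{W}\in\mathbf{H}^2$. For each $k=1,2,3$ and every sufficiently small $h\ne 0$, I will plug the test function $\varphi(x)=-D_k^{-h}D_k^h W(x)$ into the weak formulation \eqref{P-W}. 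The admissibility of $\varphi$ uses only $W\in H^1$ (which guarantees $D_k^{-h}D_k^h W\in L^2$ uniformly in $h$, and actually bounded in $H^1$ once we establish the estimate), while the integrability of $\int x\cdot\nabla W\cdot\varphi$ is exactly what the hypothesis $|x|\nabla W\in L^2$ ensures.

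The detailed calculation then parallels the one displayed below \eqref{L3-2} in the proof of Lemma \ref{L3-1}. The viscous term produces $\|D_k^h\nabla W\|_2^2$ after shifting $D_k^{-h}$; the combined zero-order and drift terms $-\frac{1}{2}W-\frac{1}{2}x\cdot\nabla W$ simplify, via the same integration by parts with respect to the difference quotient and the identity $\mathrm{div}\,x=3$, to
\begin{equation*}
\tfrac{1}{4}\|D_k^h W\|_2^2-\tfrac{1}{2}\int_{\R^3}\partial_k W\cdot D_k^h W\,dx.
\end{equation*}
The pressure term vanishes because $\mathrm{div}\,W=0$ implies $\mathrm{div}(D_k^{-h}D_k^h W)=0$. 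For the transport term, the crucial observation is that, after writing $\int(\bar{W}\cdot\nabla W)\cdot D_k^{-h}D_k^h W=-\int D_k^h(\bar{W}_j\partial_j W_i)\,D_k^h W_i$ and applying the discrete product rule, the piece $\int\bar{W}_j\,\partial_j D_k^h W_i\cdot D_k^h W_i$ is zero by $\mathrm{div}\,\bar{W}=0$, so only the commutator piece
\begin{equation*}
\int_{\R^3}D_k^h\bar{W}_j\,\partial_j W_i(\cdot+he_k)\,D_k^h W_i\,dx
\end{equation*}
survives. Finally, the forcing term is bounded by $\|F\|_2\|D_k^{-h}D_k^h W\|_2\leq\|F\|_2\|D_k^h\nabla W\|_2$.

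The main technical point — and in my view the only nonroutine one — is closing the estimate on the transport commutator uniformly in $h$. Here I use $\|D_k^h\bar{W}\|_3\leq\|\nabla\bar{W}\|_3\leq C\|\bar{W}\|_{\mathbf{H}^2(\R^3)}$ (via the standard difference-quotient bound and the embedding $H^1(\R^3)\hookrightarrow L^6(\R^3)$ interpolated with $L^2$), combined with $\|D_k^h W\|_6\leq C\|D_k^h\nabla W\|_2$ by Sobolev, to obtain
\begin{equation*}
\Bigl|\int_{\R^3}D_k^h\bar{W}_j\,\partial_j W_i(\cdot+he_k)\,D_k^h W_i\,dx\Bigr|\leq C\|\bar{W}\|_{\mathbf{H}^2(\R^3)}\|\nabla W\|_2\|D_k^h\nabla W\|_2.
\end{equation*}
An application of Young's inequality then absorbs every factor of $\|D_k^h\nabla W\|_2^2$ into the left-hand side, leaving a bound
\begin{equation*}
\|D_k^h\nabla W\|_2^2+\tfrac{1}{4}\|D_k^h W\|_2^2\leq C\bigl(1+\|\bar{W}\|_{\mathbf{H}^2(\R^3)}^2\bigr)\|\nabla W\|_2^2+C\|F\|_2^2
\end{equation*}
that is uniform in $h$. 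Passing $h\to 0$ and summing over $k$ yields $\partial_k\nabla W\in L^2(\R^3)$, hence $W\in H^2(\R^3)$ with the stated bound \eqref{P-3-5}.
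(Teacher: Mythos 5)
Your proposal is correct and follows essentially the same route as the paper: testing \eqref{P-W} with $\varphi=-D_k^{-h}D_k^hW$, using $\mathrm{div}\,\bar W=0$ to kill the symmetric part of the transport term, and closing the surviving commutator with the $L^3\times L^2\times L^6$ H\"older/Sobolev bound and Young's inequality, uniformly in $h$. The only cosmetic difference is that the paper first moves $\bar W\cdot\nabla$ onto the test function before shifting the difference quotient (so its commutator is $\int D_k^h\bar W\cdot\nabla D_k^hW\cdot W$ rather than yours), which leads to the same estimate.
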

	\begin{proof}
		The proof of this proposition follows the general approach outlined in Lemma \ref{L3-1}. By selecting the test function as $\varphi(x)=-D_{k}^{-h}D_{k}^{h}W(x)$ in \eqref{P-W}, and focus on the term $\bar{W}\cdot\nabla W$, we have
		\begin{align*}
			\begin{split}
				&-\int_{\R^{3}} \bar{W}\cdot\nabla W\cdot(-D_{k}^{-h}D_{k}^{h}W)dx=\int_{\R^{3}} \bar{W}\cdot\nabla (-D_{k}^{-h}D_{k}^{h}W)\cdot Wdx\\
				=&-\int_{\R^{3}}\bar{W}(x+he_{k})\cdot\nabla D_{k}^{h}W\cdot D_{k}^{h}Wdx-\int_{\R^{3}}D_{k}^{h}\bar{W}\cdot\nabla D_{k}^{h}W\cdot Wdx\\ \leq&  C\big\|D_{k}^{h}\bar{W}\big\|_{3}\big\|D_{k}^{h}\nabla W\big\|_{2}\|W\|_{6}\\ \leq& C\|\bar{W}\|_{H^{2}(\R^{3})}^{2}\| W\|_{H^{1}(\R^{3})}^{2}+\frac{1}{64}\big\|D_{k}^{h}\nabla W\big\|_{2}^{2},
			\end{split}
		\end{align*}
		where we use the fact that $\rm{div}\,\bar{W} =0$. Thus, the same argument presented in Lemma \ref{L3-1} allows us to derive \eqref{P-3-5}.
	\end{proof}
	\begin{prop}\label{P-3-6}
		Assume that $(U_{0},B_{0})$ satisfies \eqref{initial2}. Let $(V,G,P)$ be the weak solution of the system \eqref{PLS} constructed in Theorem \ref{T1.2}.
		Then, we have 
		$$(V,G)\in \mathbf{H}^{3}(\R^{3})$$
		and 
		 $$(\bar{V},\bar{G})\triangleq\big(|\cdot|\nabla V,|\cdot|\nabla G\big)\in \mathbf{H}^{2}(\R^{3}).$$
	\end{prop}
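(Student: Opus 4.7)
The plan is to push the regularity one derivative higher than Lemma \ref{L3-12} and Lemma \ref{L3-1}, by combining a bootstrap via the difference-quotient method with the weighted test-function technique developed in Lemma \ref{L3-12}. First I would establish that $(V,G)\in \mathbf{H}^{3}(\R^{3})$. Formally, differentiating \eqref{PLS} in $x_{k}$ produces an equation for $\partial_{k}V$ (and $\partial_{k}G$) whose structure is that of the linear system \eqref{LS} in Proposition \ref{P-3-4}, with a right-hand side in $\mathbf{L}^{2}(\R^{3})$: indeed, by Lemma \ref{L3-1} we have $(V,G)\in \mathbf{H}^{2}(\R^{3})\hookrightarrow \mathbf{L}^{\infty}(\R^{3})$, and $(U_{0},B_{0})$ satisfies \eqref{initial2}, so every term obtained by differentiating $(V+U_{0})\cdot\nabla(V+U_{0})$, $(G+B_{0})\cdot\nabla(G+B_{0})$, and their analogues is square-integrable. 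Rigorously the argument is implemented via difference quotients: test the weak formulation satisfied by $(D_{k}^{h}V,D_{k}^{h}G)$ against $\varphi=-D_{k}^{-h}D_{k}^{h}\partial_{j}V$ (integrated by parts to move one derivative) and its counterpart for $G$, following line-by-line the computation of Lemma \ref{L3-1}. The commutator of $\partial_{k}$ with $\tfrac{1}{2}x\cdot\nabla$ only generates an extra $-\tfrac{1}{2}\partial_{k}V$ which is already controlled by the $\mathbf{H}^{2}$-bound; passing to the limit $h\to 0$ gives $\|(V,G)\|_{\mathbf{H}^{3}(\R^{3})}\leq C$.

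Next, to obtain $(|\cdot|\nabla V,|\cdot|\nabla G)\in \mathbf{H}^{2}(\R^{3})$, it suffices to prove $(|\cdot|\nabla^{2}V,|\cdot|\nabla^{2}G)\in \mathbf{L}^{2}(\R^{3})$, because all other contributions to $\nabla^{2}(|x|\nabla V)$ expand as sums of $|x|\nabla^{3}V$, $\frac{x}{|x|}\nabla^{2}V$, and $\frac{1}{|x|}\nabla V$, and are controlled by the $\mathbf{H}^{3}$-bound above, the $\mathbf{H}^{2}$-bound of Lemma \ref{L3-1}, and the Hardy inequality applied to $\nabla V\in \mathbf{H}^{1}$. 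To establish this weighted bound I would mimic Lemma \ref{L3-12} one derivative higher: test the weak equations satisfied by $\partial_{k}V$ and $\partial_{k}G$ against $\varphi=-D_{j}^{-h}h_{\varepsilon}^{2}D_{j}^{h}\partial_{k}V$ and $\psi=-D_{j}^{-h}h_{\varepsilon}^{2}D_{j}^{h}\partial_{k}G$, with $h_{\varepsilon}(x)=|x|(1+\varepsilon|x|^{2})^{-3/4}$ and $\eta_{\varepsilon}=(1+\varepsilon|x|^{2})^{-1/2}$ exactly as in Proposition \ref{p3-1}. Using $\mathrm{div}\,V=\mathrm{div}\,G=0$ and the same cancellations as in Lemma \ref{L3-12}, the principal terms produce $\|h_{\varepsilon}D_{j}^{h}\nabla\partial_{k}V\|_{2}^{2}+\|\eta_{\varepsilon}h_{\varepsilon}D_{j}^{h}\partial_{k}V\|_{2}^{2}$ (and the symmetric $G$ terms), which in the limits $h\to 0$ and $\varepsilon\to 0^{+}$ yield the desired control. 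The remaining terms split into three families: (a) pressure contributions, handled via $|\cdot|P\in D^{1,2}(\R^{3})$ from Lemma \ref{L3-1}; (b) nonlinear couplings, controlled through H\"{o}lder's inequality using $(V,G)\in \mathbf{L}^{\infty}$ (from $\mathbf{H}^{2}$), $(|\cdot|\nabla V,|\cdot|\nabla G)\in \mathbf{L}^{2}$ from Proposition \ref{p3-1}, and the pointwise decay \eqref{initial2} of $(U_{0},B_{0})$; (c) commutator terms generated by the weight $h_{\varepsilon}$ and the transport $\tfrac{1}{2}x\cdot\nabla$, which are either absorbed into the left-hand side (taking advantage of the smallness constants generated by Young's inequality, as in Lemma \ref{L3-12}) or bounded by the already established norms.

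The main obstacle I anticipate lies in the bookkeeping arising from the simultaneous presence of the weight $h_{\varepsilon}$, the second-order difference quotient $D_{j}^{-h}D_{j}^{h}$, and the slowly decaying background $(U_{0},B_{0})$. Terms such as $\int h_{\varepsilon}^{2}(U_{0}\cdot\nabla \partial_{k} V)\cdot D_{j}^{-h}D_{j}^{h}\partial_{k}V\,dx$ or those involving $\partial_{k}U_{0}$, $\partial_{k}^{2}U_{0}$ and their magnetic analogues produce factors like $|x|^{2}\nabla^{2}U_{0}$ or $|x|\nabla U_{0}$ which must be absorbed using the full strength of \eqref{initial2}; derivatives must be redistributed via integration by parts so that only objects already controlled ($(V,G)\in \mathbf{L}^{\infty}$, $(|\cdot|\nabla V,|\cdot|\nabla G)\in \mathbf{L}^{2}$, $|\cdot|P\in D^{1,2}$, $P\in L^{2}$) appear, without leaving unabsorbed third-order terms. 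Once each family is appropriately estimated and the cross-terms between $V$ and $G$ cancel by the same divergence-free manipulations used throughout Section 3, a standard passage to the limit via the dominated convergence theorem completes the proof and yields the uniform bound $\|(|\cdot|\nabla V,|\cdot|\nabla G)\|_{\mathbf{H}^{2}(\R^{3})}\leq C(U_{0},B_{0})$.
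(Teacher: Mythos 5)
Your plan is correct, but it takes a genuinely different route from the paper for both sub-claims. For $(V,G)\in\mathbf{H}^{3}(\R^{3})$, the paper does not run a higher-order difference-quotient iteration at all: it applies the Leray projector $\mathbb{P}$ to rewrite the system as $-\Delta V=F_{1}$, $-\Delta G=F_{2}$, verifies $F_{1},F_{2}\in\mathbf{H}^{1}(\R^{3})$ (this uses the already-established $\mathbf{H}^{2}$ bound of $(V,G)$, the weighted $\mathbf{H}^{2}$ bound of $(|\cdot|V,|\cdot|G)$ to control $\nabla(x\cdot\nabla V)$, and \eqref{initial2}), and concludes by direct elliptic regularity. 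Your difference-quotient route works too but regenerates much of the Lemma~\ref{L3-1} bookkeeping that the paper's one-line elliptic argument sidesteps; also note a small sign slip in your commutator, $[\partial_k,\tfrac12 x\cdot\nabla]=+\tfrac12\partial_k$, not $-\tfrac12\partial_k$, though this is harmless. For $(|\cdot|\nabla V,|\cdot|\nabla G)\in\mathbf{H}^{2}(\R^{3})$ the divergence is more significant: the paper does \emph{not} repeat the $h_{\varepsilon}$-weighted difference-quotient computation of Lemma~\ref{L3-12}. Instead it writes down, once and for all, the linear system satisfied by the new unknown $\bar{V}_{k}=|x|\partial_{k}V$ (with advection coefficient $V$, pressure $\bar{P}_k=|x|\partial_k P$, and a forcing $\bar{H}_{1}$ built entirely from already-controlled quantities such as $|x|\nabla P$, $|x|\nabla V$, $|x|\nabla^{2}U_{0}$, $\tfrac{1}{|x|}\partial_k V$), verifies $\bar{H}_{1}\in L^{2}(\R^{3})$ via Hardy's inequality and the prior bounds, and then invokes the unweighted regularity transfer of Proposition~\ref{P-3-4}. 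This absorbs the weight into the unknown so that only unweighted difference quotients are needed. Your proposal instead replays the weighted test-function computation of Lemma~\ref{L3-12} one derivative higher. That is feasible, but the price is exactly the ``bookkeeping'' you flag: in particular the pressure contribution now involves one additional derivative of $P$, and one must redistribute derivatives (as in the $K_{1}$ estimate of Lemma~\ref{L3-12}) so that only $\nabla P$ and $|\cdot|\nabla P$ (available from $|\cdot|P\in D^{1,2}$ plus $P\in L^{2}$) appear paired against a principal term absorbable into the left-hand side. The paper's change of unknown avoids this delicate rebalancing entirely; your route is more self-contained but technically heavier.
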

	\begin{proof}
		First, since $(V,E)$ solves
		\begin{align*}
			\begin{split}
				-\Delta V&=\frac{1}{2}V+\frac{1}{2}x\cdot\nabla V-\mathbb{P}\big((V+U_{0})\cdot\nabla(V+U_{0})-(G+B_{0})\cdot\nabla(G+B_{0})\big)\\&=F_{1}
			\end{split}
		\end{align*}
		and
		\begin{align*}
			\begin{split}
				-\Delta G&=\frac{1}{2}G+\frac{1}{2}x\cdot\nabla G-(V+U_{0})\cdot\nabla(G+B_{0})+(G+B_{0})\cdot\nabla(V+U_{0})\\&=F_{2}.
			\end{split}
		\end{align*}
		Thanks to Lemma \ref{L3-1}, Lemma \ref{L3-12} and \eqref{initial2}, it is easy to prove that $F_{1},F_{2}\in H^{1}(\R^{3})$. Thus, by classical elliptic theory and the fact $(V,G)\in \mathbf{H}^{2}(\R^{3})$, we can immediately conclude that $(V,G)\in \mathbf{H}^{3}(\R^{3})$, and
		\begin{equation*}
			\big\|(V,G)\big\|_{\mathbf{H}^{3}(\R^{3})}\leq C(U_{0},B_{0}).
		\end{equation*}
		Next, we show that $(\bar{V},\bar{G})\in \mathbf{H}^{2}(\R^{3})$. Let us denote
		$$
		\bar{V}_{k}=|x|\partial_{k}V,\ \ \bar{G}_{k}=|x|\partial_{k}G,\ \ \mbox{and}\ \ \bar{P}_{k}=|x|\partial_{k}P. 
		$$
		Since
		\begin{align*}
			\begin{split}
				-|x|\partial_{k}\Delta V&=|x|\partial_{k}\big(\frac{1}{2}V+\frac{1}{2}x\cdot\nabla V-\nabla P-V\cdot\nabla V+H\big)\\&=\bar{V}_{k}+\frac{1}{2}|x|x\cdot\partial_{k}\nabla V-|x|\partial_{k}\nabla P-\bar{V}_{k}\cdot\nabla V-|x|V\cdot\partial_{k}\nabla V+|x|\partial_{k}H_{1},
			\end{split}
		\end{align*}
		where
		$$
		H_{1}=-V\cdot\nabla U_{0}-U_{0}\cdot\nabla(V+U_{0})+(G+B_{0})\cdot\nabla(G+B_{0}).
		$$
		Due to Lemma \ref{L3-1} and Lemma \ref{L3-12}, we know that $(\bar{V}_{k},\bar{P}_{k})\in H^{1}(\R^{3})\times L^{2}(\R^{3})$ is a weak solution to the following equation
		\begin{align*}
			\begin{split}
				&-\Delta \bar{V}_{k}-\frac{1}{2}\bar{V}_{k}-\frac{1}{2}x\cdot\nabla\bar{V}_{k}+V\cdot\nabla\bar{V}_{k}+\nabla\bar{P}_{k}\\=&-\frac{2}{|x|}\partial_{k}V-2\frac{x}{|x|}\nabla\partial_{k}V+\frac{1}{2}\bar{V}_{k}-\frac{1}{2}x\cdot\big(\frac{x}{|x|}\otimes\partial_{k}V\big)\\&+V\cdot\big(\frac{x}{|x|}\otimes\partial_{k}V\big)+\bar{V}_{k}\cdot\nabla V+\frac{x}{|x|}\partial_{k}P+|x|\partial_{k}H_{1}\\=&\bar{H}_{1}.
			\end{split}
		\end{align*}
		We claim that $\bar{H}_{1}\in L^{2}(\R^{3})$. By the fact that $|x|V\in H^{2}(\R^{3})$, $V\in H^{3}(\R^{3})$, using the Hardy inequality and H\"{o}lder inequality, we have
		\begin{align*}
			\begin{split}
				\Big\|-\frac{2}{|x|}\partial_{k}V-2\frac{x}{|x|}\nabla\partial_{k}V+\frac{1}{2}\bar{V}_{k}-\frac{1}{2}x\cdot\big(\frac{x}{|x|}\otimes\partial_{k}V\big)+V\cdot\big(\frac{x}{|x|}\otimes\partial_{k}V\big)&+\bar{V}_{k}\cdot\nabla V\Big\|_{2}\\&\leq C(U_{0},B_{0}).
			\end{split}
		\end{align*}
		From Lemma \ref{L3-12}, we also have $\nabla(|x|P)\in L^{2}(\R^{3})$ which implies that
		$$
		\Big\|\frac{x}{|x|}\partial_{k}P\Big\|_{2}\leq C(U_{0},B_{0}).
		$$
		For the last term $|x|\partial_{k}H_{1}$, we have
		\begin{align*}
			\begin{split}
				\Big\||x|\partial_{k}H_{1}\Big\|_{2}\leq& \big\||x|\nabla U_{0}\big\|_{\infty}\|\nabla V\|_{2}+\big\||x|\nabla^{2} U_{0}\big\|_{\infty}\|V\|_{2}+\big\||x|\nabla U_{0}\big\|_{\infty}\|\nabla(V+U_{0})\|_{2}\\&+\big\||x|U_{0}\big\|_{\infty}\big\|\nabla^{2}(V+U_{0})\big\|_{2}+\big\||x|\nabla G\big\|_{2}\|\nabla G\|_{\infty}+\big\||x|G\big\|_{\infty}\|\nabla^{2}G\|_{2}\\&+\big\||x|\nabla B_{0}\big\|_{\infty}\|\nabla G\|_{2}+\big\||x|\nabla^{2} B_{0}\big\|_{\infty}\|G\|_{2}\\&+\big\||x|\nabla B_{0}\big\|_{\infty}\|\nabla(G+B_{0})\|_{2}+\big\||x|B_{0}\big\|_{\infty}\big\|\nabla^{2}(G+B_{0})\big\|_{2}\\ \leq& C(U_{0},B_{0}).
			\end{split}
		\end{align*}
		Thus, by Proposition \ref{P-3-4}, we can get $\bar{V}_{k}\in H^{2}(\R^{3})$ and
		$$
		\|\bar{V}\|_{H^{2}(\R^{3})}\leq C(U_{0},B_{0}).
		$$
		Similarly, we have
		$\bar{G}_{k}\in H^{1}(\R^{3})$ solves
		\begin{align*}
			\begin{split}
				&-\Delta \bar{G}_{k}-\frac{1}{2}\bar{G}_{k}-\frac{1}{2}x\cdot\nabla\bar{G}_{k}+V\cdot\nabla\bar{G}_{k}\\=&-\frac{2}{|x|}\partial_{k}G-2\frac{x}{|x|}\nabla\partial_{k}G+\frac{1}{2}\bar{G}_{k}-\frac{1}{2}x\cdot\big(\frac{x}{|x|}\otimes\partial_{k}G\big)\\&+V\cdot\big(\frac{x}{|x|}\otimes\partial_{k}G\big)+|x|\partial_{k}V\cdot\nabla G+|x|\partial_{k}H_{2}\\=&\bar{H}_{2},
			\end{split}
		\end{align*}
		where
		$$
		H_{2}=-V\cdot\nabla B_{0}-U_{0}\cdot\nabla(G+B_{0})+(G+B_{0})\cdot\nabla(V+U_{0}).
		$$
		We can also conclude that $\|\bar{H}_{2}\|_{2}\leq C(U_{0},B_{0})$. Therefore, by applying Proposition \ref{P-3-4}, we also have 
		$$
		\|\bar{G}\|_{H^{2}(\R^{3})}\leq C(U_{0},B_{0}).
		$$	
		Thus, Proposition \ref{P-3-6} has been proven.
	\end{proof}
	
    According to Lemma \ref{L3-12} and Proposition \ref{P-3-6}, together with the Sobolev embedding theorem, Theorem \ref{T-3} can be obtained directly. Next, 
    we consider the Stokes system with a singular force as follows:
    \begin{align}\label{Stokes}
    	\begin{split}
    		\left.
    		\begin{aligned}
    			\partial_{t} w-\Delta w+\nabla p&=t^{-1}{\rm div}_{x}F\big(x/\sqrt{t}\big)\\
    			{\rm div}\,w&=0\\
    		\end{aligned}\ \right\}\ \mbox{in}\ \R^{3}\times (0,+\infty),
    	\end{split}
    \end{align}
    and provide a crucial lemma, which plays an important role in improving decay estimates for weak solutions of equation \eqref{PLS}. 
    \begin{lem}\label{stokes}
    	Let $F\in L^{q}({R^{3}})$ for $1<q<3$. Then
    		\begin{enumerate}
    		\item[(i)]for any $T<\infty$, problem \eqref{Stokes} admits a solution $w\in L^{\infty}(0,T;L^{q}(\R^{3}))$ which has the from
    		\begin{equation}\label{stokes.1}
    			w(x,t)=\int_{0}^{t}e^{(t-s)\Delta}\mathbb{P}\mathrm{div}_{x}s^{-1}F\Big(\frac{\cdot}{\sqrt{s}}\Big)ds.
    		\end{equation}
    		If $\hat{w}(x,t)\in L^{\infty}(0,T;L^{q_{1}}(\R^{3}))$ with any $q_{1}\geq1$ is another solution of \eqref{Stokes} such that $\lim_{t\rightarrow 0^{+}}\|\hat{w}(\cdot,t)\|_{L^{q_{1}}(\R^{3})}=0$, then $w\equiv\hat{w}$ in $\R^{3}\times(0,\infty)$.
    		
    		\item[(ii)] if for all $x\in\R^{3}$, $F(x)$ satisfies $|F(x)|\leq C(1+|x|)^{-2}$, then $w(x,t)$ is given by \eqref{stokes.1}. Let $W(x)=w(x,1)$, then
    		\begin{equation}\label{stokes.2}
    			|W(x)|\leq C(1+|x|)^{-2}.
    		\end{equation}
    		Moreover, if $F\in C^{1}(\R^{3})$ satisfies $
    		{\rm div}_{x}F(\frac{x}{\sqrt{t}})=t^{-\frac{1}{2}}f(\frac{x}{\sqrt{t}})
    		$ with $|f(x)|\leq C(1+|x|)^{-2}$, we have
    		\begin{equation}\label{stokes.3}
    			|\nabla W(x)|\leq C(1+|x|)^{-2}.
    		\end{equation}
    		\item[(iii)] if $F\in C^{1,\alpha}(\R^{3})$ for any $0<\alpha\leq1$, satisfies $$	|\nabla^{k}(-\Delta)^{\frac{\beta}{2}}F(x)|\leq C(1+|x|)^{-k-\beta-2},\ \ k=0,1 \ \mbox{and}\ \beta\in(0,\alpha),$$  then, we have the optimal decay estimate as follows:
    		\begin{align}\label{stokes.}
    			\begin{split}
    				&|W(x)|\leq C(1+|x|)^{-3}.
    			\end{split}
    		\end{align}
    		\end{enumerate}
    \end{lem}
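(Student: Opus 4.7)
\textbf{Proof strategy for Lemma \ref{stokes}.} The plan is to build $w$ explicitly through the Oseen semigroup and read the pointwise decay of $W=w(\cdot,1)$ directly off the kernel bounds \eqref{oseen} and \eqref{frac}. For part (i), I would integrate \eqref{stokes.1} by parts once to write
\[
w(x,t)=-\int_0^t\!\int_{\R^3}\nabla_y S(x-y,t-s)\,s^{-1}F(y/\sqrt{s})\,dy\,ds.
\]
The kernel bound \eqref{oseen} gives $\|\nabla S(\cdot,\tau)\|_{L^1}\leq C\tau^{-1/2}$, and the homogeneity of the rescaled source gives $\|F(\cdot/\sqrt{s})\|_{L^q}=s^{3/(2q)}\|F\|_{L^q}$; Young's inequality then produces $\|w(\cdot,t)\|_{L^q}\leq C\|F\|_{L^q}\int_0^t(t-s)^{-1/2}s^{\frac{3}{2q}-1}\,ds$, which converges precisely when $1<q<3$. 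Uniqueness follows because the difference of two solutions in $L^\infty(0,T;L^{q_1})$ with zero initial trace solves a homogeneous Stokes problem, and the standard semigroup estimate forces it to vanish.

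For part (ii), I would insert $|F(x)|\leq C(1+|x|)^{-2}$ into the same representation and use the identity $s^{-1}(1+|y|/\sqrt{s})^{-2}=(\sqrt{s}+|y|)^{-2}$ to reduce the pointwise bound \eqref{stokes.2} to controlling
\[
\int_0^1\!\int_{\R^3}(\sqrt{1-s}+|x-y|)^{-4}(\sqrt{s}+|y|)^{-2}\,dy\,ds.
\]
Splitting $\R^3$ into $\{|y|\leq|x|/2\}$, $\{|x|/2<|y|<2|x|\}$ and $\{|y|\geq 2|x|\}$ and integrating in $s$ bounds each piece by $C(1+|x|)^{-2}$, with the middle region as the saturating one. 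The gradient bound \eqref{stokes.3} is handled with no integration by parts: the structural assumption $\mathrm{div}_xF(x/\sqrt{t})=t^{-1/2}f(x/\sqrt{t})$ expresses $\nabla W$ as a convolution of $\nabla S$ against a source of the same order, to which the same regional analysis applies.

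For part (iii), the bare estimate from (ii) saturates at $(1+|x|)^{-2}$ in the middle region, so the gap to the optimal $(1+|x|)^{-3}$ must be bridged by exploiting the extra Hölder regularity of $F$. I would transfer $(-\Delta)^{\beta/2}$ from $F$ to the kernel via $F=(-\Delta)^{-\beta/2}(-\Delta)^{\beta/2}F$, using the scaling identity $(-\Delta)^{\beta/2}[F(\cdot/\sqrt{s})](y)=s^{-\beta/2}[(-\Delta)^{\beta/2}F](y/\sqrt{s})$ which exactly absorbs the extra $s^{-\beta/2}$ produced by the dilation; combined with \eqref{frac} the pointwise bound becomes
\[
|W(x)|\leq C\!\int_0^1\!\int_{\R^3}(\sqrt{1-s}+|x-y|)^{-4+\beta}(\sqrt{s}+|y|)^{-2-\beta}\,dy\,ds.
\]
A refined regional split, combined with a Taylor expansion $F(y)=F(x)+\nabla F(x)\cdot(y-x)+O(|y-x|^{1+\alpha})$ inside the middle region and the cancellation carried by the Leray projector, then converts the polynomial gain into the missing extra power and yields $C(1+|x|)^{-3}$ with no logarithmic loss.

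The main obstacle is the rigorous execution of part (iii). Since $F$ is only locally $C^{1,\alpha}$ and not Schwartz, the fractional integration by parts requires careful approximation; the scaling behaviour of $(-\Delta)^{\pm\beta/2}$ under $y\mapsto y/\sqrt{s}$ must be justified so that absolute convergence is preserved throughout; and the requisite vanishing-moment structure of the Oseen kernel paired against the Taylor remainder has to be extracted from the explicit form of $S$. Once this bookkeeping is in place, the remaining arguments reduce to the same regional analysis already used in (i) and (ii).
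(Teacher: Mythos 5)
Your proposal tracks the paper's proof of parts (i) and (ii) closely: integration by parts plus Young's inequality for (i) (the paper actually just cites [LMZ1] here), and the regional decomposition against the Oseen bound \eqref{oseen} for (ii), with the gradient estimate handled via the alternative representation through $f$. For part (iii), you also correctly identify the central device of transferring $(-\Delta)^{\beta/2}$ from the source onto the kernel so that \eqref{frac} applies. However, the mechanics of how the extra power is then recovered are muddled in a way that would block a rigorous execution. The bare bound you write down after the fractional transfer, namely
\[
\int_0^1\!\int_{\R^3}(\sqrt{1-s}+|x-y|)^{-4+\beta}(\sqrt{s}+|y|)^{-2-\beta}\,dy\,ds,
\]
only yields $(1+|x|)^{-2-\beta}$, and this is \emph{not} improved by a second-order Taylor expansion of $F$ itself: once $(-\Delta)^{\beta/2}$ has been moved onto the kernel, the source appearing in the convolution is $(-\Delta)^{\beta/2}\tilde F$, and Taylor-expanding $F$ is no longer the relevant operation. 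What the paper actually does in its decomposition $W=K_1+\cdots+K_4$ is subtract the \emph{point value} $(-\Delta)^{\beta/2}\tilde F(x,s)$ from the integrand in the near region $|y-x|\le|x|/2$, bound the difference by a first-order mean-value estimate (gaining a factor $|x-y|$, then invoking $|\nabla(-\Delta)^{\beta/2}\tilde F(\cdot,s)|\lesssim(\sqrt s+|\cdot|)^{-3-\beta}$), and control the compensating contribution using the vanishing moment $\int_{\R^3}(-\Delta)^{-\beta/2}\nabla S\,dx=0$. Note also that this mean-zero property is not specific to the Leray projector as you suggest: it is simply the fact that the gradient of an integrable decaying kernel has integral zero, and it is needed in identical form for $\nabla\Gamma$ to treat the heat equation part governing $G$. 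In short, you should difference $(-\Delta)^{\beta/2}\tilde F$ to first order, not Taylor-expand $F$ to second order, and pair the remainder against the vanishing moment of $(-\Delta)^{-\beta/2}\nabla S$.
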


    \begin{rem}	
    	In fact, the result of Lemma \ref{stokes} is  clearly applicable to the heat equation (i.e., without the pressure term in system \eqref{Stokes}), as the heat kernel and the Oseen tensor exhibit a similar structure and pointwise estimates.
    \end{rem}
    \begin{proof}[Proof of Lemma \ref{stokes}]
    	For (i), please refer to Proposition 4.1 of \cite{LMZ1}. 
    	
    	For (ii), based on the pointwise estimate of the Oseen kernel \eqref{oseen} and (i), if $|F(x)|\leq C(1+|x|)^{-2}$, then, we have
    	\begin{align*}
    		\begin{split}
    			|W(x)|&=|w(x,1)|=\Big|\int_{0}^{1}e^{(1-s)\Delta}
    			\mathbb{P}{\rm div}_{x}s^{-1}F\Big(\frac{\cdot}{\sqrt{s}}\Big)ds\Big|\\&=\Big|\int_{0}^{1}\int_{\R^{3}}\nabla S(x-y,1-s)s^{-1}F\Big(\frac{y}{\sqrt{s}}\Big)dyds\Big|
    			\\&\leq C\int_{0}^{1}\int_{\R^{3}}\frac{1}{(\sqrt{1-s}+|x-y|)^{4}}\frac{1}{(\sqrt{s}+|y|)^{2}}dyds\\&= C\int_{0}^{1}\int_{|y|\leq\frac{|x|}{2}}\cdot\cdot\cdot dyds+\int_{0}^{1}\int_{\frac{|x|}{2}\leq|y|\leq2|x|}\cdot\cdot\cdot dyds+\int_{0}^{1}\int_{|y|\geq2|x|}\cdot\cdot\cdot dyds\\&=L_{1}+L_{2}+L_{3}.
    		\end{split}
    	\end{align*}
    	If $|x|\leq M$ for some constant $M\geq0$, we claim that there exists a constant $C_{\ast}$, which depends only on $M$ such that
    	\begin{equation}\label{bound}
    		|W(x)|\leq C_{\ast}.
    	\end{equation} 
    	For $L_{1}$, 
    	\begin{align*}
    		\begin{split}
    			L_{1}&\leq C\int_{0}^{\frac{1}{2}}\int_{|y|\leq\frac{|x|}{2}}\frac{1}{(\sqrt{s}+|y|)^{2}}dyds+C\int_{\frac{1}{2}}^{1}\int_{|y|\leq\frac{|x|}{2}}\frac{1}{(\sqrt{1-s}+|x-y|)^{4}}dyds\\&\leq C\int_{|y|\leq\frac{|x|}{2}}\frac{1}{|y|^{2}}dy+C\int_{0}^{\frac{1}{2}}\int_{|y|\leq\frac{|x|}{2}}\frac{1}{(\sqrt{s}+|x|)^{4}}dyds\leq C_{\ast}.
    		\end{split}
    	\end{align*}
    	Similarly,
    	\begin{align*}
    		\begin{split}
    			L_{2}&\leq C|x|+C\int_{0}^{\frac{1}{2}}\int_{|y|\leq3|x|}\frac{1}{(\sqrt{s}+|y|)^{4}}dyds\leq  C_{\ast}
    		\end{split}
    	\end{align*}
    	and
    	\begin{align*}
    		\begin{split}
    			L_{3}&\leq C\int_{|y|\geq2|x|}\frac{1}{(1+|y|)^{4}}|y|^{-2}dy
    			+C\int_{0}^{\frac{1}{2}}\int_{|y|\geq2|x|}\frac{1}{(\sqrt{s}+|y|)^{4}}dyds\leq C_{\ast}.
    		\end{split}
    	\end{align*}
    	Thus, \eqref{bound} has been proven. We will now assume that $|x|>>1$, and examine the asymptotic properties of $|W(x)|$ at infinity. In this case,
    	\begin{align*}
    		\begin{split}
    			&L_{1}\leq C|x|^{-4}\int_{0}^{1}\int_{|y|\leq\frac{|x|}{2}}\frac{1}{(\sqrt{s}+|y|)^{2}}dyds
    			\leq C|x|^{-3},\\&
    			L_{3}\leq C|x|^{-2}\int_{0}^{1}\int_{|y|\geq2|x|}\frac{1}{(\sqrt{1-s}+|y|)^{4}}dyds\leq C|x|^{-3},
    		\end{split}
    	\end{align*}
    	and 
    	\begin{align*}
    		\begin{split}
    			L_{2}&\leq C|x|^{-2}\int_{0}^{1}\int_{|y|\leq3|x|}\frac{1}{(\sqrt{1-s}+|y|)^{4}}dyds\leq C|x|^{-2}.
    		\end{split}
    	\end{align*}
    	Collecting all estimates of $L_{1}-L_{3}$, we conclude that \eqref{stokes.2}.
    	If $F\in C^{1}(\R^{3})$ satisfies $
    	{\rm div}_{x}F(\frac{x}{\sqrt{t}})=t^{-\frac{1}{2}}f(\frac{x}{\sqrt{t}})
    	$ with $|f(x)|\leq C(1+|x|)^{-2}$, there exists another solution in the following form:
    	\begin{equation}\label{stokes.4}
    		\tilde{w}(x,t)=\int_{0}^{t}e^{(t-s)\Delta}\mathbb{P}s^{-\frac{3}{2}}f\Big(\frac{\cdot}{\sqrt{s}}\Big)ds.
    	\end{equation}
    	Notice that
    	\begin{align*}
    		\begin{split}
    			\|\tilde{w}(\cdot,t)\|_{L^{p}(\R^{3})}&=\bigg\|\int_{0}^{t}e^{(t-s)\Delta}\mathbb{P}s^{-\frac{3}{2}}f\Big(\frac{\cdot}{\sqrt{s}}\Big)ds\bigg\|_{L^{p}(\R^{3})}\\&\leq C\int_{0}^{t}\bigg\|(t-s)^{-\frac{3}{2}}S\Big(\frac{\cdot}{\sqrt{t-s}}\Big)\bigg\|_{L^{q}(\R^{3})}\bigg\|s^{-\frac{3}{2}}f\Big(\frac{\cdot}{\sqrt{s}}\Big)\bigg\|_{L^{r}(\R^{3})}ds
    		\end{split}\\&\leq C\|S(x,1)\|_{L^{q}(\R^{3})}\|f(x)\|_{L^{r}(\R^{3})}\int_{0}^{t}(t-s)^{\frac{3}{2q}-\frac{3}{2}}s^{\frac{3}{2r}-\frac{3}{2}}ds\\&\leq C\|S(x,1)\|_{L^{q}(\R^{3})}\|f(x)\|_{L^{r}(\R^{3})}t^{\frac{3}{2p}-\frac{1}{2}},
    	\end{align*}
    	where $1+\frac{1}{p}=\frac{1}{q}+\frac{1}{r}$ with $q>1$ and $r>3/2$.
    	Thus, we have $\tilde{w}(x,t)\in L^{\infty}(0,T;L^{p}(\R^{3}))$ for all $p\in(\frac{3}{2},3]$. By the uniqueness, we obtain $w(x,t)=\tilde{w}(x,t)$. 
    	According to \eqref{stokes.4}, 
    	\begin{align*}
    		\begin{split}
    			|\nabla W(x)|&=\Big|\int_{0}^{1}\int_{\R^{3}}\nabla S(x-y,1-s)s^{-\frac{3}{2}}f\Big(\frac{y}{\sqrt{s}}\Big)dyds\Big|
    			\\&\leq C\int_{0}^{1}s^{-\frac{1}{2}}\int_{\R^{3}}\frac{1}{(\sqrt{1-s}+|x-y|)^{4}}\frac{1}{(\sqrt{s}+|y|)^{2}}dyds\\&\leq C(1+|x|)^{-2}.
    		\end{split}
    	\end{align*}
    	Thus, (ii) has been proved. 
    	
    	For (iii), if we set $\tilde{F}(x,s)=s^{-1}F(x/ \sqrt{s})$, we can derive
    	\begin{equation*}\label{bound4-1}
    		|\nabla^{k}(-\Delta)^{\frac{\beta}{2}}\tilde{F}(x,s)|\leq C(\sqrt{s}+|x|)^{-k-\beta-2},\ \ \mbox{for}\  k=0,1\ \mbox{and}\ \ \beta\in(0,\alpha).
    	\end{equation*}
    	By applying the basic properties of Fourier multipliers and the uniqueness, we can rewrite $W(x)$ as
    	$$
    	W(x)=\int_{0}^{1}\int_{\R^{3}}(-\Delta)^{-\frac{\beta}{2}}\nabla S(x-y,1-s)(-\Delta)^{\frac{\beta}{2}}\tilde{F}(y,s)dyds.
    	$$
    	We decompose $W(x)$ as follows:
    	\begin{align*}
    		\begin{split}
    			W(x)=&\int_{0}^{1}\int_{|y-x|\leq\frac{|x|}{2}}(-\Delta)^{-\frac{\beta}{2}}\nabla S(x-y,1-s)\Big((-\Delta)^{\frac{\beta}{2}}\tilde{F}(y,s)-(-\Delta)^{\frac{\beta}{2}}\tilde{F}(x,s)\Big)dyds\\&+\int_{0}^{1}\int_{\{|y-x|\geq\frac{|x|}{2}\}\cap\{|y|\geq|x|\}}(-\Delta)^{-\frac{\beta}{2}}\nabla S(x-y,1-s)(-\Delta)^{\frac{\beta}{2}}\tilde{F}(y,s)dyds\\&+\int_{0}^{1}\int_{\{|y-x|\geq\frac{|x|}{2}\}\cap\{|y|\leq|x|\}}(-\Delta)^{-\frac{\beta}{2}}\nabla S(x-y,1-s)(-\Delta)^{\frac{\beta}{2}}\tilde{F}(y,s)dyds\\&-\int_{0}^{1}\int_{|y-x|\geq\frac{|x|}{2}}(-\Delta)^{-\frac{\beta}{2}}\nabla S(x-y,1-s)(-\Delta)^{\frac{\beta}{2}}\tilde{F}(x,s)dyds\\=&\sum_{i=1}^{4}K_{i},
    		\end{split}
    	\end{align*}
    where we use the fact	that
    $\int_{\mathbb{R}^3}(-\Delta)^{-\frac{\beta}{2}} \nabla Sdx=0$. 
    Here, we only consider the asymptotic behavior of $W(x)$ at infinity, as $|W(x)|$ is bounded for $|x|\leq M$. 
    According to mean value theorem and \eqref{frac}, we have
    \begin{align*}
    	\begin{split}
    	|K_{1}|&\leq C\int_{0}^{1}\int_{|y-x|\leq\frac{|x|}{2}}\frac{1}{(\sqrt{1-s}+|x-y|)^{4-\beta}}\frac{|x-y|}{(\sqrt{s}+|\theta y+(1-\theta)x|)^{3+\beta}}dyds\\&\leq C|x|^{-3-\beta}\int_{0}^{1}\int_{|y-x|\leq\frac{|x|}{2}}\frac{1}{(\sqrt{1-s}+|x-y|)^{3-\beta}}dyds\leq C(\beta)|x|^{-3},
    	\end{split}
    \end{align*}
    where we use the fact that
    $$
    |\theta y+(1-\theta)x|\geq|x|-\theta|x-y|\geq\frac{|x|}{2}.
    $$
     For $K_{3}$, we have
    $$
    |K_{3}|\leq C|x|^{-4+\beta}\int_{0}^{1}\int_{|y|\leq|x|}\frac{1}{(\sqrt{s}+|y|)^{2+\beta}}dyds\leq C(\beta)|x|^{-3}.
    $$
    Similarly,
    	$$
    	|K_{2}|+|K_{4}|\leq C|x|^{-2-\beta}\int_{0}^{1}\int_{|y-x|\geq\frac{|x|}{2}}\frac{1}{(\sqrt{1-s}+|x-y|)^{4-\beta}}dyds\leq C(\beta)|x|^{-3}.
    	$$
    	Thus, we conclude the optimal decay estimate for $W(x)$ that 
    	\eqref{stokes.}.
    \end{proof}
    Once Lemma \ref{stokes} has been established, we can 
    improve the decay rate for $(V,G)$ at infinity. 
    	\begin{thm}\label{T1-1}
    	 Let $(V,G,P)$ be the weak solution of \eqref{PLS} constructed in Theorem \ref{T1.2}.
    	 \begin{enumerate}
    	 	\item[(i)] if $(U_{0},B_{0})$ satisfies \eqref{initial2}, then we have
    	 	\begin{align}\label{T11}
    	 		\begin{split}
    	 			&|V(x)|+|G(x)|\leq C(U_{0},B_{0})(1+|x|)^{-3}\log(2+|x|),\\&
    	 			|\nabla V(x)|+|\nabla G(x)|\leq C(U_{0},B_{0})(1+|x|)^{-3}.
    	 		\end{split}
    	 	\end{align}
    	 	\item[(ii)] if $(U_{0},B_{0})$ satisfies \eqref{initial3}, we can derive the optimal estimates for $(V,G)$, that is
    	 	\begin{align}\label{T2}
    	 		\begin{split}
    	 			|V(x)|+|G(x)|\leq C(U_{0},B_{0})(1+|x|)^{-3}.
    	 		\end{split}
    	 	\end{align}
    	 	\item[(iii)] if $(U_{0},B_{0})$ satisfies \eqref{initial4}, then
    	 	\begin{equation}\label{L3-13-1}
    	 		|\nabla^{k} P(x)|\leq C(U_{0},B_{0})(1+|x|)^{-k-1},\ \ k=0,1.
    	 	\end{equation}
    	 \end{enumerate}
    \end{thm}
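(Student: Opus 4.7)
The approach is to view $V$ and $G$ as time-$1$ slices of self-similar solutions to non-stationary Stokes/heat problems and extract decay by iterating Lemma~\ref{stokes}. Setting $v(x,t)=t^{-1/2}V(x/\sqrt t)$, $g(x,t)=t^{-1/2}G(x/\sqrt t)$, and substituting $u=u_I+v$, $b=b_I+g$ into \eqref{E1.1}, the pair $(v,g)$ solves systems of the type \eqref{Stokes} (resp.\ its heat analogue) with source tensors
\[
F=-(V+U_{0})\otimes(V+U_{0})+(G+B_{0})\otimes(G+B_{0}),
\]
\[
H=-(V+U_{0})\otimes(G+B_{0})+(G+B_{0})\otimes(V+U_{0}),
\]
and $V=v(\cdot,1)$, $G=g(\cdot,1)$; Lemma~\ref{stokes}(i) identifies $V,G$ with Duhamel integrals against the Oseen and heat kernels.

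\textbf{Proof of (i).} Theorem~\ref{T-3} together with \eqref{initial2} gives $|F|+|H|\le C(1+|x|)^{-2}$, so Lemma~\ref{stokes}(ii) (first part) upgrades $|V|+|G|$ to $C(1+|x|)^{-2}$. Using divergence-freeness of $V,G,U_{0},B_{0}$, the quantities $\mathrm{div}_{x}F$ and $\mathrm{div}_{x}H$ reduce to sums of \emph{field $\times$ gradient-of-field} terms each bounded by $C(1+|x|)^{-2}$, so the gradient half of Lemma~\ref{stokes}(ii) yields $|\nabla V|+|\nabla G|\le C(1+|x|)^{-2}$. Feeding these improved bounds back, every summand in $\mathrm{div}_{x}F,\mathrm{div}_{x}H$ now contains at least one factor of decay $(1+|x|)^{-2}$, whence $|\mathrm{div}_{x}F|+|\mathrm{div}_{x}H|\le C(1+|x|)^{-3}$. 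Inserting into the representation
\[
V(x)=\int_{0}^{1}\!\!\int_{\R^{3}}S(x-y,1-s)\,s^{-3/2}(\mathrm{div}\,F)(y/\sqrt s)\,dy\,ds
\]
and splitting $\R^{3}$ into the zones $\{|y|\le|x|/2\}$, $\{|x|/2\le|y|\le 2|x|\}$, $\{|y|\ge 2|x|\}$ as in the proof of Lemma~\ref{stokes}(ii), but with one extra power of source decay, controls the near and far zones by $(1+|x|)^{-3}$, while the intermediate zone produces the $(1+|x|)^{-3}\log(2+|x|)$. The analogous analysis with $\nabla S$ replacing $S$ gives $|\nabla V|+|\nabla G|\le C(1+|x|)^{-3}$, the extra kernel power absorbing the logarithm.

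\textbf{Proofs of (ii) and (iii).} For (ii), \eqref{initial3} and Lemma~\ref{inital}(iii), together with a fractional Leibniz/convolution argument applied componentwise to the quadratic tensors, yield $|\nabla^{k}(-\Delta)^{\beta/2}F|+|\nabla^{k}(-\Delta)^{\beta/2}H|\le C(1+|x|)^{-k-\beta-2}$ for $k=0,1$ and $\beta\in(0,\alpha)$; Lemma~\ref{stokes}(iii) then removes the logarithm and produces $|V|+|G|\le C(1+|x|)^{-3}$. For (iii), the pressure satisfies \eqref{press}; under \eqref{initial4} one has $|\nabla^{k}U_{0}|+|\nabla^{k}B_{0}|\le C(1+|x|)^{-1-k}$ for $k\le 2$, and combined with (ii) plus interior elliptic regularity for \eqref{PLS-1} (providing second-order pointwise control of $V,G$) the right-hand side of \eqref{press} decays like $(1+|y|)^{-4}$. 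Writing $P$ as a Newton-kernel convolution $P(x)=\int(4\pi|x-y|)^{-1}(-\Delta P)(y)\,dy$ and performing the usual near/intermediate/far split gives $|P(x)|\le C(1+|x|)^{-1}$; differentiation under the integral produces $|\nabla P(x)|\le C(1+|x|)^{-2}$.

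\textbf{Main obstacle.} The delicate step is the intermediate-zone $|y|\sim|x|$ analysis in (i): with $(1+|y|)^{-3}$ source decay and the Oseen kernel's $|x-y|^{-3}$ singularity, the integral sits on the edge of summability and forces the $\log(2+|x|)$ loss on $|V|+|G|$. Removing it in (ii) relies precisely on the Hölder hypothesis \eqref{initial3}, which via fractional derivatives and Lemma~\ref{stokes}(iii) supplies the extra $|x|^{-\beta}$ gain needed to close the intermediate zone without a logarithm.
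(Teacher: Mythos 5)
Your argument for part (i) mirrors the paper's iterative bootstrap: start from the $\mathbf H^2$-based bound $|V|+|G|\le C(1+|x|)^{-1}$ of Theorem~\ref{T-3}, insert into the Duhamel formula with $F\otimes$-type sources of decay $(1+|x|)^{-2}$, get $(1+|x|)^{-2}$ for $V,G$ and then for $\nabla V,\nabla G$, upgrade $f,h$ to decay $(1+|x|)^{-3}$, and then the final convolution estimate leaves a borderline $\log$ only for $V,G$ while $\nabla V,\nabla G$ avoid it because $\nabla S$ gives an extra kernel power. This is the same route.

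For part (ii) you apply the fractional gain globally: you assert $|\nabla^{k}(-\Delta)^{\beta/2}F|\le C(1+|x|)^{-k-\beta-2}$ for the \emph{entire} tensor $F=(V+U_0)\otimes(V+U_0)-\cdots$ via a ``fractional Leibniz/convolution argument,'' and then invoke Lemma~\ref{stokes}(iii). That route is workable in principle but requires you to establish pointwise fractional-derivative decay for the $V$- and $G$-containing summands (e.g.\ $(-\Delta)^{\beta/2}(V\otimes V)$, $(-\Delta)^{\beta/2}(V\otimes U_0)$), which is not done anywhere in the paper and is not a one-liner: the commutator integral in the pointwise formula for $(-\Delta)^{\beta/2}(fg)$ must be estimated zone by zone. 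The paper avoids this by splitting $V=V_1+V_2$, where $V_1$ collects every term carrying a factor of $V$ or $G$ and is estimated by the plain Lemma~\ref{stokes}(ii) machinery because $|\tilde f|\le C(1+|x|)^{-4}$ is already strictly subcritical (no logarithm arises), while $V_2$ is driven purely by $\tilde F=s^{-1}(U_0\otimes U_0-B_0\otimes B_0)(y/\sqrt s)$, for which the fractional-derivative bound follows directly from Lemma~\ref{inital}(iii) and a single quadratic commutator estimate. That decomposition removes the need to make sense of $(-\Delta)^{\beta/2}$ acting on $V$-terms at all, and is what actually pins the logarithm loss on the initial data's $C^{0,1}$-vs-$C^{1,\alpha}$ regularity.

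For part (iii) you take a genuinely different and more elementary path: bound $|\mathfrak F|\le C(1+|y|)^{-4}$ pointwise (bottleneck $\partial_j U_{0i}\partial_i U_{0j}$ under \eqref{initial4}), then estimate the Newton potential $P=\Phi*\mathfrak F$ and its gradient by the near/intermediate/far split. The paper instead bounds $\|P\|_\infty$, $|x||P|$, $\|\nabla P\|_\infty$, $|x|^2|\nabla P|$ through Lorentz-space H\"older/Young inequalities (Lemmas~\ref{HYI} and \ref{SGN}) together with the weighted $H^2$ estimates $(|\cdot|\nabla V,|\cdot|\nabla G)\in\mathbf H^2$ from Proposition~\ref{P-3-6}. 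Both are valid; yours is more transparent given the pointwise decay already in hand, while the paper's Lorentz approach uses only the integral bounds from Section~3.2 and is less sensitive to the borderline decay exponents. (Minor note: your appeal to ``second-order pointwise control of $V,G$'' is not needed here — $\mathfrak F$ involves only first derivatives of $V,G,U_0,B_0$, all controlled by part~(i) and \eqref{initial4}.)
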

	\begin{proof}
		First, we focus on the decay estimate \eqref{T11}. Letting
		$$
		v(x,t)=\frac{1}{\sqrt{t}}V\Big(\frac{x}{\sqrt{t}}\Big),\ \ g(x,t)=\frac{1}{\sqrt{t}}G\Big(\frac{x}{\sqrt{t}}\Big),
		$$
		we obtain a solution to the following system
		\begin{align}\label{stokes-2}
			\begin{split}
				\left.
				\begin{aligned}
					\partial_{t} v-\Delta v+\nabla p&=t^{-1}{\rm div}_{x}F\big(x/\sqrt{t}\big)\\
					\partial_{t} g-\Delta g&=t^{-1}{\rm div}_{x}H\big(x/\sqrt{t}\big)\\
					{\rm div}\,v&={\rm div}\,g=0\\
				\end{aligned}\ \right\}\ \mbox{in}\ \R^{3}\times (0,+\infty),
			\end{split}
		\end{align}
		where 
		$$
		F=(V+U_{0})\otimes(V+U_{0})-(G+B_{0})\otimes(G+B_{0})
		$$
		and
		$$
		H=(V+U_{0})\otimes(G+B_{0})-(G+B_{0})\otimes(V+U_{0}).
		$$
		Let us decompose system \eqref{stokes-2} into the Stokes equation
		\begin{align}\label{stokes-3}
			\begin{split}
				\left.
				\begin{aligned}
					\partial_{t} v-\Delta v+\nabla p&=t^{-1}{\rm div}_{x}F\big(x/\sqrt{t}\big)\\
					{\rm div}\,v&=0\\
				\end{aligned}\ \right\}\ \mbox{in}\ \R^{3}\times (0,+\infty)
			\end{split}
		\end{align}
		and the heat equations
		\begin{align}\label{heats-2}
			\begin{split}
				\left.
				\begin{aligned}
					\partial_{t} g-\Delta g&=t^{-1}{\rm div}_{x}H\big(x/\sqrt{t}\big)\\
					{\rm div}\,g&=0\\
				\end{aligned}\ \right\}\ \mbox{in}\ \R^{3}\times (0,+\infty).
			\end{split}
		\end{align}
			Based on Theorem \ref{T-3} and the assumption \eqref{initial2}, one has
		$$
		|F(x)|+|H(x)|\leq C(1+|x|)^{-2}.
		$$
		Thus, due to Lemma \ref{stokes}, the solution $(v,g)$ of the system \eqref{stokes-2} can be expressed as follows:
		\begin{equation*}
			v(x,t)=\int_{0}^{t}e^{(t-s)\Delta}\mathbb{P}{\rm div}_{x}s^{-1}F\Big(\frac{\cdot}{\sqrt{s}}\Big)ds,\ \ g(x,t)=\int_{0}^{t}e^{(t-s)\Delta}{\rm div}_{x}s^{-1}H\Big(\frac{\cdot}{\sqrt{s}}\Big)ds.
		\end{equation*}
		Moreover,
		\begin{equation}\label{T3.15-1}
		|V(x)|+|G(x)|\leq C(1+|x|)^{-2}.
	\end{equation}
		Note that
		$$
		t^{-1}{\rm div}_{x}F(\frac{x}{\sqrt{t}})=t^{-\frac{3}{2}}f(\frac{x}{\sqrt{t}}),\ \ t^{-1}{\rm div}_{x}H(\frac{x}{\sqrt{t}})=t^{-\frac{3}{2}}h(\frac{x}{\sqrt{t}}),
		$$
		where
		$$
		f=(V+U_{0})\cdot\nabla (V+U_{0})-(G+B_{0})\cdot\nabla (G+B_{0})
		$$
		and
		$$
		h=(V+U_{0})\cdot\nabla (G+B_{0})-(G+B_{0})\cdot\nabla (V+U_{0}).
		$$
		By \eqref{initial2} and Theorem \ref{T-3}, we also have
		$$
		|f(x)|+|h(x)|\leq C(1+|x|)^{-2}.
		$$
	Thanks to Lemma \ref{stokes}, the solution $(v,g)$ can be equivalently expressed as
		\begin{equation*}
			v(x,t)=\int_{0}^{t}e^{(t-s)\Delta}\mathbb{P}s^{-\frac{3}{2}}f\Big(\frac{\cdot}{\sqrt{s}}\Big)ds,\ \ 	g(x,t)=\int_{0}^{t}e^{(t-s)\Delta}s^{-\frac{3}{2}}h\Big(\frac{\cdot}{\sqrt{s}}\Big)ds.
		\end{equation*}
		Moreover, 
		\begin{align*}
			\begin{split}
				|\nabla V(x)|+|\nabla G(x)|=&\Big|\int_{0}^{1}\int_{\R^{3}}\nabla S(x-y,1-s)s^{-\frac{3}{2}}f\Big(\frac{y}{\sqrt{s}}\Big)dyds\Big|\\&+\Big|\int_{0}^{1}\int_{\R^{3}}\nabla \Gamma(x-y,1-s)s^{-\frac{3}{2}}h\Big(\frac{y}{\sqrt{s}}\Big)dyds\Big|
				\\ \leq& C\int_{0}^{1}s^{-\frac{1}{2}}\int_{\R^{3}}\frac{1}{(\sqrt{1-s}+|x-y|)^{4}}\frac{1}{(\sqrt{s}+|y|)^{2}}dyds\\ \leq& C(1+|x|)^{-2}.
			\end{split}
		\end{align*}
		Thus, we can improve the order of the decay estimates for $f(x)$ and $h(x)$ to 
		\begin{equation}\label{decay3}
		|f(x)|+|h(x)|\leq C(1+|x|)^{-3}.
		\end{equation}
		Once \eqref{decay3} has been established, we can immediately derive
		\begin{align*}
			\begin{split}
				|V(x)|+|G(x)|=&\Big|\int_{0}^{1}\int_{\R^{3}} S(x-y,1-s)s^{-\frac{3}{2}}f\Big(\frac{y}{\sqrt{s}}\Big)dyds\Big|\\&+\Big|\int_{0}^{1}\int_{\R^{3}} \Gamma(x-y,1-s)s^{-\frac{3}{2}}h\Big(\frac{y}{\sqrt{s}}\Big)dyds\Big|
				\\ \leq& C\int_{0}^{1}\int_{\R^{3}}\frac{1}{(\sqrt{1-s}+|x-y|)^{3}}\frac{1}{(\sqrt{s}+|y|)^{3}}dyds\\ \leq& C(1+|x|)^{-3}\log(2+|x|),
			\end{split}
		\end{align*}
		and
		\begin{align*}
			\begin{split}
				|\nabla V(x)|+|\nabla G(x)|=&\Big|\int_{0}^{1}\int_{\R^{3}} \nabla S(x-y,1-s)s^{-\frac{3}{2}}f\Big(\frac{y}{\sqrt{s}}\Big)dyds\Big|\\&+\Big|\int_{0}^{1}\int_{\R^{3}} \nabla \Gamma(x-y,1-s)s^{-\frac{3}{2}}f\Big(\frac{y}{\sqrt{s}}\Big)dyds\Big|
				\\ \leq& C\int_{0}^{1}\int_{\R^{3}}\frac{1}{(\sqrt{1-s}+|x-y|)^{4}}\frac{1}{(\sqrt{s}+|y|)^{3}}dyds\\ \leq& C(1+|x|)^{-3}.
			\end{split}
		\end{align*}
		Thus, \eqref{T11} has been proved.
		
 Next, we will prove the optimal decay estimates \eqref{T2} under the assumption \eqref{initial3}.
	Note that we can decompose
	\begin{align*}
		\begin{split}
			V(x)=&V_{1}(x)+V_{2}(x)\\=&\int_{0}^{1}\int_{\R^{3}} S(x-y,1-s)s^{-\frac{3}{2}}\tilde{f}\Big(\frac{y}{\sqrt{s}}\Big)dyds+\int_{0}^{1}\int_{\R^{3}} \nabla  S(x-y,1-s)\tilde{F}(y,s)dyds,
		\end{split}
	\end{align*}
	where
	$$
	\tilde{f}=(V+U_{0})\cdot\nabla V+V\cdot\nabla U_{0}-(G+B_{0})\cdot\nabla G-G\cdot\nabla B_{0}
	$$
	and
	$$
	\tilde{F}(y,s)=s^{-1}\big(U_{0}\otimes U_{0}-B_{0}\otimes B_{0}\big)(y/\sqrt{s}).
	$$
	Since $|\tilde{f}(x)|\leq C(1+|x|)^{-4}$, some similar calculations enable us to derive
	\begin{align}\label{stokes.4-2}
		\begin{split}
			&|V_{1}(x)|\leq C(1+|x|)^{-3}.
		\end{split}
	\end{align}
	Now, we focus on $V_{2}$. According to Lemma \ref{stokes}, it is sufficient to demonstrate that
	\begin{equation}\label{only}
			|\nabla^{k}(-\Delta)^{\frac{\beta}{2}}\tilde{F}(x,s)|\leq C(\sqrt{s}+|x|)^{-k-\beta-2},\ \ \mbox{for}\  k=0,1\ \mbox{and}\ \ \beta\in(0,\alpha).
	\end{equation}
	Since $B_{0}$  possesses the same properties as $U_0{}$, for simplicity, we omit $B_{0}\otimes B_{0}$ in $\tilde{F}$. According to the definition of $(-\Delta)^{\alpha}$, we have 
	\begin{align*}
		\begin{split}
		(-\Delta)^{\frac{\beta}{2}}(U_{0}\otimes U_{0})(x)&=2(-\Delta)^{\frac{\beta}{2}}U_{0}\otimes U_{0}+\int_{\R^{3}}\frac{(U_{0}(y)-U_{0}(x))\otimes(U_{0}(x)-U_{0}(y))}{|x-y|^{3+\beta}}dy\\&=\mathrm{I}+\mathrm{II}.
		\end{split}
	\end{align*}
	From \eqref{initial3}, we can immediately derive
	$$
	|\mathrm{I}|\leq C(1+|x|)^{-2-\beta}.
	$$
	For $\mathrm{II}$, we split it into
	\begin{align*}
		\begin{split}
			\mathrm{II}&=\bigg(\int_{|y-x|\leq\frac{|x|}{2}}+\int_{\frac{|x|}{2}\leq|y-x|\leq2|x|}+\int_{|y-x|\geq2|x|}\bigg)\frac{(U_{0}(y)-U_{0}(x))\otimes(U_{0}(x)-U_{0}(y))}{|x-y|^{3+\beta}}dy\\&=\mathrm{II}_{1}+\mathrm{II}_{2}+\mathrm{II}_{3}.
		\end{split}
	\end{align*}
	When $|x|\leq1$, we can easy derive $|II|\leq C$. Thus, we suppose $|x|>>1$ and study its decay rate at infinity. By applying the mean value theorem, we have
	\begin{align*}
		\begin{split}
	\mathrm{II}_{1}&\leq\int_{|y-x|\leq\frac{|x|}{2}}\big|\nabla U_{0}(\theta y+(1-\theta)x)\big|^{2}|x-y|^{-1-\beta}dy\\&\leq C\frac{1}{|x|^{4}}\int_{|y-x|\leq\frac{|x|}{2}}|x-y|^{-1-\beta}dy\leq C|x|^{-2-\beta}.		
	\end{split}
\end{align*}
	For $\mathrm{II}_{2}$, one has
		\begin{align*}
		\begin{split}
			\mathrm{II}_{2}&\leq C|x|^{-3-\beta}\int_{\frac{|x|}{2}\leq|y-x|\leq2|x|}|U_{0}(x)|^{2}+|U_{0}(x)||U_{0}(y)|+|U_{0}(y)|^{2}dy\\&\leq C|x|^{-2-\beta}+C|x|^{-4-\beta}\int_{|y|\leq3|x|}|y|^{-1}dy+C|x|^{-3-\beta}\int_{|y|\leq3|x|}|y|^{-2}dy\\&\leq C|x|^{-2-\beta}.
		\end{split}
	\end{align*}
	We can derive $|y|\geq|x|$ when $|y-x|\geq2|x|$. Thus
	\begin{align*}
		\begin{split}
			\mathrm{II}_{3}&\leq C|x|^{-2}\int_{|y-x|\geq2|x|}\frac{1}{|x-y|^{3+\beta}}dy\\&\leq C|x|^{-2-\beta}.
		\end{split}
	\end{align*}
	Combining the above calculation, we obtain
	$$
	|(-\Delta)^{\frac{\beta}{2}}(U_{0}\otimes U_{0})(x)|\leq C(1+|x|)^{-2-\beta},
	$$
	which means that
	\begin{equation*}
		|(-\Delta)^{\frac{\beta}{2}}\tilde{F}(x,s)|\leq C(\sqrt{s}+|x|)^{-2-\beta}.
	\end{equation*}
	By similar calculus, we can also deduce that
	\begin{equation*}
		|\nabla(-\Delta)^{\frac{\beta}{2}}\tilde{F}(x,s)|\leq C(\sqrt{s}+|x|)^{-3-\beta}.
	\end{equation*}
	Thus, \eqref{only} has been proved.
	By Lemma \ref{stokes}, we can directly obtain
	\begin{align*}
		\begin{split}
			&|V_{2}(x)|\leq C(1+|x|)^{-3}.
		\end{split}
	\end{align*}
	The above estimate along with \eqref{stokes.4-2} allow us to yield \eqref{T2}.
	At the same time, based on the same properties of the heat kernel, we also possess
	\begin{equation*}
	|G(x)|\leq C(1+|x|)^{-3}.
\end{equation*}
	
	Finally, we establish the decay estimate for the pressure $P$. Thanks to \eqref{press},
	the Newtonian potential enables us to obtain that 
	$$
	P(x)=\frac{1}{4\pi}\int_{\R^{3}}\frac{1}{|x-y|}\mathfrak{F}(y)dy,
	$$
	where
	\begin{align*}
		\begin{split}
			\mathfrak{F}(y)=&\sum_{i,j=1}^{3}\Big(\partial_{j}V_{i}\partial_{i}V_{j}+\partial_{j}V_{i}\partial_{i}U_{0j}+\partial_{j}U_{0i}\partial_{i}V_{j}+\partial_{j}U_{0i}\partial_{i}U_{0j}\\&-\partial_{j}G_{i}\partial_{i}G_{j}-\partial_{j}G_{i}\partial_{i}B_{0j}-\partial_{j}B_{0i}\partial_{i}G_{j}-\partial_{j}B_{0i}\partial_{i}B_{0j}\Big).
		\end{split}
	\end{align*}
	Thus, by Lemmas \ref{HYI} and \ref{SGN}, we conclude that
	\begin{align}\label{press1}
		\begin{split}
			\|P\|_{\infty}\leq& C\bigg\|\frac{1}{|\cdot|}\bigg\|_{L^{3,\infty}(\R^{3})}\|\mathfrak{F}\|_{L^{\frac{3}{2},1}(\R^{3})}\\ \leq& C\Big(\|\nabla V\|_{L^{6,2}(\R^{3})}\|\nabla V\|_{L^{2}(\R^{3})}+\|\nabla V\|_{L^{6,2}(\R^{3})}\|\nabla U_{0}\|_{L^{2}(\R^{3})}\\&+\|\nabla U_{0}\|_{L^{6,2}(\R^{3})}\|\nabla U_{0}\|_{L^{2}(\R^{3})}+\|\nabla G\|_{L^{6,2}(\R^{3})}\|\nabla G\|_{L^{2}(\R^{3})}\\&+\|\nabla G\|_{L^{6,2}(\R^{3})}\|\nabla B_{0}\|_{L^{2}(\R^{3})}+\|\nabla B_{0}\|_{L^{6,2}(\R^{3})}\|\nabla B_{0}\|_{L^{2}(\R^{3})}\Big)\\ 
			\leq& C\Big(\big\|(\nabla V,\nabla G)\big\|_{\mathbf{L}^{6,2}(\R^{3})}^{2}+\big\|(\nabla V,\nabla G)\big\|_{\mathbf{L}^{2}(\R^{3})}^{2}\\&+\big\|(\nabla U_{0},\nabla B_{0})\big\|_{\mathbf{L}^{6,2}(\R^{3})}^{2}+\big\|(\nabla U_{0},\nabla B_{0})\big\|_{\mathbf{L}^{2}(\R^{3})}^{2}
			\Big)\\ \leq& C\Big(\big\|(\nabla V,\nabla G)\big\|_{\mathbf{H}^{1}(\R^{3})}^{2}+\big\|(\nabla U_{0},\nabla B_{0})\big\|_{\mathbf{H}^{1}(\R^{3})}^{2}
			\Big)\\ \leq& C(U_{0},B_{0}).
		\end{split}
	\end{align}
	Letting
	$$
	(\bar{V},\bar{G})=(|\cdot|\nabla V,|\cdot|\nabla G) \ \ \mbox{and}\ \ (\bar{U_{0}},\bar{B_{0}})=(|\cdot|\nabla U_{0},|\cdot|\nabla B_{0}).
	$$
	Due to Proposition \ref{P-3-6} and the estimate \eqref{initial3}, we can easily obtain
	$$
	(\nabla\bar{V},\nabla\bar{G})\in \mathbf{L^{2}}(\R^{3})\ \ \mbox{and}\ \ (\nabla\bar{U}_{0},\nabla\bar{B}_{0})\in \mathbf{L^{2}}(\R^{3}).
	$$
	Hence,
	\begin{align}\label{press2}
		\begin{split}
			|x||P(x)|\leq& C\Big(\int_{\R^{3}}|x-y|\frac{1}{|x-y|}|\mathfrak{F}(y)|dy+\int_{\R^{3}}\frac{1}{|x-y|}|y||\mathfrak{F}(y)|dy\Big)\\ \leq& C\Big(\|\mathfrak{F}\|_{L^{1}(\R^{3})}+\big\|1/|\cdot|\big\|_{L^{3,\infty}(\R^{3})}\big\||\cdot|\mathfrak{F}\big\|_{L^{\frac{3}{2},1}(\R^{3})}\Big)\\
			\leq& C\Big(\big\|(\nabla V,\nabla G)\big\|_{\mathbf{L}^{2}(\R^{3})}^{2}+\big\|(\bar{V}, \bar{G})\big\|_{\mathbf{L}^{6,2}(\R^{3})}^{2}\\&+\big\|(\nabla U_{0},\nabla B_{0})\big\|_{\mathbf{L}^{2}(\R^{3})}^{2}+\big\|( \bar{U}_{0}, \bar{B}_{0})\big\|_{\mathbf{L}^{6,2}(\R^{3})}^{2}
			\Big)\\ \leq& C\Big(\big\|(\nabla V,\nabla G)\big\|_{\mathbf{L}^{2}(\R^{3})}^{2}+\big\|(\nabla\bar{V}, \nabla\bar{G})\big\|_{\mathbf{L}^{2}(\R^{3})}^{2}\\&+\big\|(\nabla U_{0},\nabla B_{0})\big\|_{\mathbf{L}^{2}(\R^{3})}^{2}+\big\|( \nabla\bar{U}_{0},\nabla \bar{B}_{0})\big\|_{\mathbf{L}^{2}(\R^{3})}^{2}
			\Big)\\ \leq& C(U_{0},B_{0}).
		\end{split}
	\end{align}
Notice that
	$$
	\nabla P(x)=\frac{1}{4\pi}\int_{\R^{3}}\frac{x-y}{|x-y|^{3}}\mathfrak{F}(y)dy.
	$$
Similarly, 
	\begin{align}\label{press3}
		\begin{split}
			\|\nabla P\|_{\infty}&\leq C\bigg\|\frac{1}{|\cdot|^{2}}\bigg\|_{L^{\frac{3}{2},\infty}(\R^{3})}\|\mathfrak{F}\|_{L^{3,1}(\R^{3})}\\&\leq C\Big(\big\|(\nabla V,\nabla G)\big\|_{\mathbf{L}^{6,2}(\R^{3})}^{2}+\big\|(\nabla U_{0},\nabla B_{0})\big\|_{\mathbf{L}^{6,2}(\R^{3})}^{2}
			\Big)\\&\leq C\Big(\big\|(\nabla^{2} V,\nabla^{2} G)\big\|_{\mathbf{L}^{2}(\R^{3})}^{2}+\big\|(\nabla^{2} U_{0},\nabla^{2} B_{0})\big\|_{\mathbf{L}^{2}(\R^{3})}^{2}
			\Big)\\&\leq C(U_{0},B_{0}),
		\end{split}
	\end{align}
	and
	\begin{align}\label{press4}
		\begin{split}
			|x|^{2}|\nabla P|\leq& C\Big(\int_{\R^{3}}|x-y|^{2}\frac{1}{|x-y|^{2}}|\mathfrak{F}(y)|dy+\int_{\R^{3}}\frac{1}{|x-y|^{2}}|y|^{2}|\mathfrak{F}(y)|dy\Big)\\ \leq& C\bigg(\big\|(\nabla V,\nabla G)\big\|_{\mathbf{L}^{2}(\R^{3})}^{2}+\big\|(\nabla U_{0},\nabla B_{0})\big\|_{\mathbf{L}^{2}(\R^{3})}^{2}\\&+\big\|1/|\cdot|^{2}\big\|_{L^{\frac{3}{2},\infty}(\R^{3})}\big\||\cdot|^{2}\mathfrak{F}\big\|_{L^{3,1}(\R^{3})}\bigg)\\ \leq& C\bigg(\big\|(\nabla V,\nabla G)\big\|_{\mathbf{L}^{2}(\R^{3})}^{2}+\big\|(\nabla U_{0},\nabla B_{0})\big\|_{\mathbf{L}^{2}(\R^{3})}^{2}\\&+\big\|(\bar{V}, \bar{G})\big\|_{\mathbf{L}^{6,2}(\R^{3})}^{2}+\big\|( \bar{U}_{0}, \bar{B}_{0})\big\|_{\mathbf{L}^{6,2}(\R^{3})}^{2}\bigg)\\ \leq& C(U_{0},B_{0}).
		\end{split}
	\end{align}
	Collecting \eqref{press1}-\eqref{press4}, we get \eqref{L3-13-1}.
	This theorem has been proven.
	\end{proof}

		\section{Conclusion}
		In this section, we prove our conclusion by using Theorems \ref{T1.2} and \ref{T1-1}.
		\begin{proof}[Proof of Theorem \ref{T1.1}]
		First, we focus on the existence of forward self-similar solutions. Let $(V,G,P)$ be constructed in Theorem \ref{T1.2} and denote
			$$
			v(x,t)=\frac{1}{\sqrt{t}}V\Big(\frac{x}{\sqrt{t}}\Big),\ \ g(x,t)=\frac{1}{\sqrt{t}}G\Big(\frac{x}{\sqrt{t}}\Big),\ \ p(x,t)=\frac{1}{t}P\Big(\frac{x}{\sqrt{t}}\Big).
			$$
			Setting
			\begin{equation*}
				u(x,t)=\frac{1}{\sqrt{t}}(U_{0}+V)\Big(\frac{x}{\sqrt{t}}\Big)=u_{I}(x,t)+v(x,t)
			\end{equation*}
			and
			\begin{equation*}
				b(x,t)=\frac{1}{\sqrt{t}}(B_{0}+G)\Big(\frac{x}{\sqrt{t}}\Big)=b_{I}(x,t)+g(x,t).
			\end{equation*}
		 We can readily verify that $(u,b,p)$ is a self-similar solution to the MHD equations \eqref{E1.1} due to its scaling properties. Thanks to Lemma \ref{inital}, we have
			\begin{equation}\label{E4.1}
				\big(u_{I}(x,t), b_{I}(x,t)\big)\in BC_{w}([0,+\infty),\mathbf{L}^{3,\infty}(\R^{3})).
			\end{equation}
			Moreover,
			$$
			\big(u_{I}(x,t), b_{I}(x,t)\big)\rightarrow\big(u_{0},b_{0}\big), \ \ \mbox{as}\ \  t\rightarrow0^{+},
			$$
			in the weak star topology of $\mathbf{L}^{3,\infty}(\R^{3})$.
			Next, we will prove
			\begin{equation*}\label{E4.2}
				\big(v(x,t), g(x,t)\big)\in BC_{w}([0,+\infty),\mathbf{L}^{3,\infty}(\R^{3})).
			\end{equation*}
			Since $(V,G)\in\mathbf{H}_{\sigma}^{1}(\R^{3})$, it follows from the Sobolev embedding theorem that
			$$
			\|(V,G)\|_{p}\leq C\|(V,G)\|_{\mathbf{H}^{1}(\R^{3})},\ \ \mbox{for each}\ 2\leq p\leq6.
			$$
			Thus
				\begin{align}\label{E4.3}
				\begin{split}
					\big\|\big(v(t),g(t)\big)\big\|_{\mathbf{L}^{p}(\R^{3})}&=\Big(\int_{\R^{3}}\Big|\frac{1}{\sqrt{t}}(V,G)(\frac{x}{\sqrt{t}})\Big|^{p}dx\Big)^{\frac{1}{p}}\\&
					=t^{\frac{3}{2p}-\frac{1}{2}}\big\|(V,G)\big\|_{p},
				\end{split}
			\end{align}
			which implies that $\big(v(t),g(t)\big)\in L^{\infty}([0,\infty),\mathbf{L}^{3,\infty}(\R^{3}))$, since $L^{3}\hookrightarrow L^{3,\infty}$. It remains to demonstrate that $\big(v(x,t), g(x,t)\big)$ is weak star continuous in $\mathbf{L}^{3,\infty}(\R^{3})$ with respect to $t$. Indeed, we only need to consider the continuity at $0$. We claim that
			$$
			\big(v(t),g(t)\big)\rightarrow0, \ \ \mbox{as}\ \  t\rightarrow0^{+},
			$$
			in the weak star topology of $\mathbf{L}^{3,\infty}(\R^{3})$.
			By interpolation theory
			$$
			\big(L^{1}(\R^{3}),L^{2}(\R^{3})\big)_{\frac{2}{3},1}=L^{\frac{3}{2},1}(\R^{3}).
			$$
			Thus, for any $\Upsilon\in \mathbf{L}^{\frac{3}{2},1}(\R^{3})$, we can approximate it using functions $\Upsilon_{\epsilon}\in \mathbf{L}^{1}\cap \mathbf{L}^{2}(\R^{3})$.	
		Thanks to \eqref{E4.3}, we have
			$$
			\int_{\R^{3}}\big(v(t),g(t)\big)\Upsilon_{\epsilon}dx\leq t^{\frac{1}{4}}\|(V,G)\|_{2}\|\Upsilon_{\epsilon}\|_{2}.
			$$
			By the Dominated convergence theorem, one obtains
			$$
			\int_{\R^{3}}\big(v(t),g(t)\big)\Upsilon dx\leq Ct^{\frac{1}{4}}\rightarrow0, \ \ \mbox{as}\ \ t\rightarrow0^{+}.
			$$
			Therefore,
			$$\big(v(x,t),g(x,t)\big)\in BC_{w}([0,\infty);\mathbf{L}^{3,\infty}(\R^{3})),$$
			together with the fact \eqref{E4.1}, we get $(u,b)\in BC_{w}([0,\infty);\mathbf{L}^{3,\infty}(\R^{3}))$.
			Moreover, we have  $(V,G)\in \mathbf{C}^{\infty}(\R^{3})$ by Theorem \ref{T1.3}, which implies that 
			$$
			\big(u(x,t), b(x,t)\big)\in \mathbf{C}^{\infty}\big(\R^{3}\times(0,\infty)\big).
			$$
			Now, we verify \eqref{T1}. By \eqref{E4.3}, we have
			$$
			\|u-e^{t\Delta}u_{0}\|_{p}=\Big\|\frac{1}{\sqrt{t}}V\Big(\frac{x}{\sqrt{t}}\Big)\Big\|_{p}\leq Ct^{\frac{3}{2p}-\frac{1}{2}},
			$$
			and
			$$
			\|\nabla u-\nabla e^{t\Delta}u_{0}\|_{2}=\Big\|\frac{1}{\sqrt{t}}\nabla_{x} V\Big(\frac{x}{\sqrt{t}}\Big)\Big\|_{2}\leq Ct^{-\frac{1}{4}}.
			$$
			Similarly,
			$$
			\|b-e^{t\Delta}b_{0}\|_{p}\leq Ct^{\frac{3}{2p}-\frac{1}{2}}\ \  \mbox{and} \ \ \|\nabla b-\nabla e^{t\Delta}b_{0}\|_{2}\leq Ct^{-\frac{1}{4}}.
			$$

			Next, we will focus on the pointwise estimation for self-similar solutions $(u,b)$. Notice that, if $(u_{0},b_{0})\in \mathbf{C}^{0,1}_{loc}(\R^{3}\setminus\{0\})$, by scaling property and the decay estimate \eqref{initial2}, we can derive
			\begin{align}\label{decay1}
				\begin{split}
				&|u_{I}(x,t)|+|b_{I}(x,t)|\leq C\frac{1}{(\sqrt{t}+|x|)},\\&
				|\nabla u_{I}(x,t)|+|\nabla b_{I}(x,t)|\leq C\frac{1}{(\sqrt{t}+|x|)^{2}}.
			\end{split}
		\end{align}
		According to Theorem \ref{T1-1}, we have
		\begin{align}\label{decay2}
			\begin{split}
		&|u(x,t)-e^{t\Delta}u_{0}|+|b(x,t)-e^{t\Delta}b_{0}|\\=&|v(x,t)|+|g(x,t)|	\\=&\Big|\frac{1}{\sqrt{t}}V\Big(\frac{x}{\sqrt{t}}\Big)\Big|+\Big|\frac{1}{\sqrt{t}}G\Big(\frac{x}{\sqrt{t}}\Big)\Big|
			\\ \leq& C\frac{t}{(\sqrt{t}+|x|)^{3}}\log{\Big(2+\frac{|x|}{\sqrt{t}}\Big)},
			\end{split}
		\end{align}
		and
		\begin{align}\label{decay4}
			\begin{split}
			&|\nabla u(x,t)-\nabla e^{t\Delta}u_{0}|+|\nabla b(x,t)-\nabla e^{t\Delta}b_{0}|\\=&|\nabla v(x,t)|+|\nabla g(x,t)|	\\=&\Big|\frac{1}{t}\nabla V\Big(\frac{x}{\sqrt{t}}\Big)\Big|+\Big|\frac{1}{t}\nabla G\Big(\frac{x}{\sqrt{t}}\Big)\Big|
				\\ \leq& C\frac{\sqrt{t}}{(\sqrt{t}+|x|)^{3}}.
			\end{split}
		\end{align}
		Thus, \eqref{T2-3} has been proven. Moreover, 
		from \eqref{decay1}-\eqref{decay4}, we can also derive \eqref{T3}. Using Theorem \ref{T1-1} again, if $(u_{0},b_{0})\in \mathbf{C}^{1,\alpha}_{loc}(\R^{3}\setminus\{0\})$ with $0<\alpha\leq1$, i.e. $(U_{0},B_{0})$ satisfies the decay estimate \eqref{initial3}, we can get
		\begin{align*}
			\begin{split}
				|u(x,t)-e^{t\Delta}u_{0}|+|b(x,t)-e^{t\Delta}b_{0}|
				\leq C\frac{t}{(\sqrt{t}+|x|)^{3}},
			\end{split}
		\end{align*}
		and when $\alpha=1$,
		\begin{align*}
			\begin{split}
				|\nabla^{k} p(x,t)|=\Big|t^{-\frac{k+2}{2}}\nabla^{k} P\Big(\frac{x}{\sqrt{t}}\Big)\Big|
				\leq Ct^{-\frac{1}{2}}\frac{1}{(\sqrt{t}+|x|)^{k+1}},\ \ k=0,1.
			\end{split}
		\end{align*}
		Thus, \eqref{T2-4} and \eqref{Press} have been proven, and we conclude the proof of Theorem \ref{T1.1}.
		\end{proof}

		\setcounter{equation}{0}
		\setcounter{equation}{0}

		\begin{appendices}
			\appendix
			\renewcommand{\appendixname}{}
			\renewcommand{\theequation}{\thesection \arabic{equation}}
			\setcounter{equation}{0}

	\end{appendices}

	\end{CJK}
\end{document}